\newtheorem{thm}{Theorem}[section]
\newtheorem{lem}[thm]{Lemma}
\newtheorem{prop}[thm]{Proposition}
\newtheorem{cor}[thm]{Corollary}
\newtheorem*{theo}{Theorem}
\theoremstyle{definition}
\newtheorem{defn}[thm]{Definition}
\newtheorem*{notation}{Notation}
\newtheorem{rem}[thm]{Remark}
\newtheorem{rems}[thm]{Remarks}
\newtheorem{conj}[thm]{Conjecture}
\theoremstyle{remark}
\newtheorem*{remark}{Remark}
\newtheorem*{remarks}{Remarks}
\newtheorem*{examples}{Examples}
\theoremstyle{plain}
\newtheorem{mainthm}{Theorem}
\newtheorem{mainhyp}[mainthm]{Hypothesis}
\numberwithin{equation}{section}
\newenvironment{french}{\begin{otherlanguage}{french}}{\end{otherlanguage}}
\newcommand{\Q}{{\mathbb{Q}}}
\newcommand{\N}{{\mathbb{N}}}
\newcommand{\C}{{\mathbb{C}}}
\newcommand{\R}{{\mathbb{R}}}
\newcommand{\D}{{\mathbb{D}}}
\newcommand{\tD}{{\widetilde{\mathbb{D}}}}
\newcommand{\Z}{{\mathbb{Z}}}
\newcommand{\bbS}{{\mathbb{S}}}
\newcommand{\bbtS}{{\widetilde{\mathbb{S}}}}
\newcommand{\calC}{{\mathcal{C}}}
  \newcommand{\cO}{{\mathcal{O}}}
\newcommand{\calA}{{\mathcal{A}}}
\newcommand{\calB}{{\mathcal{B}}}
\newcommand{\calD}{{\mathcal{D}}}  \newcommand{\cD}{{\mathcal{D}}}
\newcommand{\calF}{{\mathcal{F}}}  
  \newcommand{\cH}{{\mathcal{H}}}
\newcommand{\calL}{{\mathcal{L}}}  
\newcommand{\calM}{{\mathcal{M}}}  \newcommand{\cM}{{\mathcal{M}}}
\newcommand{\calN}{{\mathcal{N}}}  \newcommand{\cN}{{\mathcal{N}}}
\newcommand{\calS}{{\mathcal{S}}}
\DeclareMathAlphabet{\euls}{U}{eus}{m}{n}
\newcommand{\fa}{{\mathfrak{a}}}
\newcommand{\fc}{{\mathfrak{c}}}
\newcommand{\g}{{\mathfrak{g}}}  \renewcommand{\fg}{{\mathfrak{g}}}
\newcommand{\fh}{{\mathfrak{h}}} \newcommand{\h}{{\mathfrak{h}}}
\newcommand{\fp}{{\mathfrak{p}}} 
\newcommand{\fk}{{\mathfrak{k}}}  \renewcommand{\k}{{\mathfrak{k}}}
\newcommand{\fsl}{{\mathfrak{sl}}}
\newcommand{\fs}{{\mathfrak{s}}}
\newcommand{\ft}{{\mathfrak{t}}}
\newcommand{\Xt}{{\mathtt{X}}}
\newcommand{\Bt}{{\mathtt{B}}}
\newcommand{\Rt}{{\mathtt{R}}}
\newcommand{\tti}{{\mathtt{i}}}
\newcommand{\ttj}{{\mathtt{j}}}
\newcommand{\esf}{{\mathsf{e}}}
\newcommand{\msf}{{\mathsf{m}}}
\newcommand{\asf}{{\mathsf{a}}}
\DeclareMathSymbol{\Ima}{\mathord}{symbols}{"3D}
\def\preisomto{\vbox{\hbox to
               14pt{\hfill$\sim$\hfill}\nointerlineskip\vskip -0.2pt
               \hbox to 14pt{\rightarrowfill}}}
\def\isomto{\mathop{\preisomto}}
\def\prelongisomto{\vbox{\hbox to
                17pt{\hfill$\sim$\hfill}\nointerlineskip\vskip -0.2pt
                \hbox to 17pt{\rightarrowfill}}}  
\def\longisomto{\mathop{\prelongisomto}}
\def\preWedge{\vbox{\hbox
               {\hfill$\bigwedge$\hfill}\nointerlineskip\vskip -3.2pt
               \hbox {\phantom{.}}}}
\def\Wedge{\mathop{\preWedge}}
\newcommand{\stoo}{\longrightarrow \kern-15pt
\longrightarrow}
\newcommand{\lto}{\longrightarrow}
\newcommand{\sto}{\twoheadrightarrow}
\def\trait{\hbox to 12.6cm{\hrulefill}}
\newcommand{\imply}{\Rightarrow}
\newcommand{\limply}{\Longrightarrow}
\def\ad{\operatorname {ad}}
\def\ann{\operatorname {ann}}
\def\Ker{\operatorname {Ker}}
\def\Im{\operatorname {Im}}
\def\Hom{\operatorname {Hom}}
\def\mod{\operatorname {\mathsf{mod}}}
\def\End{\operatorname {End}}
\def\GL{\operatorname {GL}}
\def\SL{\operatorname {SL}}
\def\Sp{\operatorname {Sp}}
\def\SO{\operatorname {SO}}
\def\gr{\operatorname {gr}}
\def\ch{\operatorname {Ch}}
\def\gk{\operatorname {GKdim}}
\def\Lie{\operatorname {Lie}}
\def\Sym{\operatorname {Sym}}
\def\Sol{\operatorname {Sol}}
\def\rad{\operatorname {rad}}
\def\Mat{\operatorname {M}}
\def\SO{\operatorname {SO}}
\def\Sp{\operatorname {Sp}}
\def\res{\operatorname {res}}
\def\Frac{\operatorname {Frac}}
\def\ord{\operatorname {ord}}
\def\boplus{\mathbin{\boldsymbol{\oplus}}}
\newcommand{\Exp}{\mathrm{e}}
\newcommand{\id}{{\mathrm{id}}}
\newcommand{\dual}[2]{{\langle {#1} ,  {#2} \rangle}}
\newcommand{\ascal}[2]{{\langle {#1}  \mid {#2} \rangle}}
\newcommand{\vpi}{\varpi}
\newcommand{\vphi}{\varphi}
\newcommand{\tG}{{\tilde{G}}}
\newcommand{\tT}{{\tilde{T}}}
\newcommand{\tLambda}{{\tilde{\Lambda}}}
\newcommand{\tGamma}{{\tilde{\Gamma}}}
\newcommand{\tgamma}{{\tilde{\gamma}}}
\newcommand{\tlambda}{{\tilde{\lambda}}}
\newcommand{\tfg}{{\tilde{\mathfrak{g}}}}
\newcommand{\ftg}{{\tilde{\mathfrak{g}}}}
\newcommand{\tK}{\tilde{K}}
\newcommand{\Vtlambda}{{V(\tilde{\lambda})}}
\newcommand{\Vtgamma}{{V(\tilde{\gamma})}}
\newcommand{\Etlambda}{{E(\tilde{\lambda})}}
\newcommand{\Elambda}{{E(\lambda})}
\newcommand{\CV}{{\C[V]}}
\newcommand{\CVG}{{\C[V]^G}}
\newcommand{\SVG}{{S(V)^G}}
\newcommand{\DV}{{\mathcal{D}(V)}}
\newcommand{\Dv}{{\mathcal{D}_V}}
\newcommand{\DVG}{{\mathcal{D}(V)^G}}
\newcommand{\DVmodG}{{\mathcal{D}(V\qmod G)}}
\newcommand{\DVtG}{{\mathcal{D}(V)^{\tilde{G}}}}
\newcommand{\Dh}{{\mathcal{D}(\h)}}
\newcommand{\DhmodW}{{\mathcal{D}(\h/W})}
\newcommand{\Ch}{{\C[\h]}}
\newcommand{\ChW}{{\C[\h]^W}}
\newcommand{\Sh}{{S(\h)}}
\newcommand{\Shs}{{S(\h^*)}}
\newcommand{\SV}{{S(V)}}
\newcommand{\hreg}{{\mathfrak{h}^{\mathrm{reg}}}}
\newcommand{\half}{{\frac{1}{2}}}
\newcommand{\qmod}{{/\!\!/}}
\newcommand{\GV}{{(G : V)}}
\newcommand{\tGV}{{(\tilde{G} : V)}}
\newcommand{\tGVd}{{(\tilde{G} : V^*)}}
\newcommand{\cV}{{\calC(V)}}
\newcommand{\tcV}{{\tilde{\calC}(V)}}
\newcommand{\DVtauG}{{[\DV \tau(\g)]^G}}
\newcommand{\CW}{{\mathbb{C}W}}
\newcommand{\Sph}{{\mathsf{e}\cH\mathsf{e}}}
\newcommand{\sph}{{\mathsf{e}\cH^W}}
\newcommand{\dz}{\partial_z}
\newcommand{\dx}{\partial_x}
\newcommand{\bTheta}{{\bar{\Theta}}}
\newcommand{\Ej}[1]{{E_{#1}(x,\partial_x)}}
\newcommand{\cMP}{{\mathcal{M}_P}}
\begin{document}
{\selectlanguage{english}

  \title[Radial components]{Radial Components,
    Prehomogeneous Vector spaces, and Rational Cherednik
    Algebras} \author[T.~Levasseur]{Thierry~Levasseur}
  \address{Laboratoire de Math\'ematiques, UMR6205,
    Universit\'e de Brest, 29238 Brest cedex~3, France}
  \email{Thierry.Levasseur@univ-brest.fr}
  \keywords{prehomogeneous vector space, ring of
    differential operators, Dunkl operator, Cherednik
    algebra, Capelli operator, radial component, holonomic
    module} \subjclass[2000]{14L30, 16S32, 17B45, 20G20,
    22E46}

\begin{abstract}
  Let $\tGV$ be a finite dimensional representation of a
  connected reductive complex Lie group $\tGV$.  Denote by
  $G$ the derived subgroup of $\tG$ and assume that the
  categorical quotient $V \qmod G$ is one dimensional,
  i.e.~$\CVG = \C[f]$ for a non constant polynomial $f$. In
  this situation there exists a homomorphism $\rad : \DVG
  \to A_1(\C)$, the radial component map, where $A_1(\C)$ is
  the first Weyl algebra. We show that the image of $\rad$
  is isomorphic to the spherical subalgebra of a rational
  Cherednik algebra whose multiplicity function is defined
  by the roots of the Bernstein-Sato polynomial of $f$. In
  the case where $\tGV$ is also multiplicity free we
  describe the kernel of $\rad$ and prove a Howe duality
  result between representations of $G$ occuring in $\CV$
  and lowest weight modules over the Lie algebra generated
  by $f$ and the ``dual'' differential operator $\Delta \in
  S(V)$; this extends results of H.~Rubenthaler obtained
  when $\tGV$ is a parabolic prehomogeneous vector space.
  If $\tGV$ satisfies a Capelli type condition, some
  applications are given to holonomic and equivariant
  $D$-modules on $V$. These applications are related to
  results proved by M.~Muro or P.~Nang in special cases of
  the representation $\tGV$.
\end{abstract}
   
\maketitle

\tableofcontents


\section{Introduction} \label{sec1}

The base field is the field $\C$ of complex numbers.  Let
$\GV$ be a finite dimensional representation of a connected
reductive Lie group $G$. The action of $G$ extends to
various algebras: $\C[V] = S(V^*)$ the polynomial functions
on $V$, $\DV$ the differential operators on $V$ with
coefficients in $\CV$ and $\SV$ identified with differential
operators on $V$ with constant coefficients.  Recall that
$\DV \cong \CV \otimes \SV$ as a $(\CV,G)$-module and that
$g \in G$ acts on $D \in \DV$ by $(g.D)(\vphi) =
g.D(g^{-1}.\vphi)$ for all $\vphi \in \CV$.  We thus obtain
algebras of invariants $\CVG$, $S(V)^G$ and $\DVG$. Then
$\CVG$ is (by definition) the algebra of regular functions
on the categorical quotient $V \qmod G$ and one can define
the algebra $\cD(V\qmod G)$ of differential operators on
this quotient (see~\cite{EGA} or \cite{MR}).

If $D \in \DVG$ and $f \in \CVG$ one obviously has $D(f) \in
\CVG$; this gives an algebra homomorphism:
$$
\DVG \lto \DVmodG, \ \; D \mapsto \{f \mapsto D(f), \ f \in
\CVG\}.
$$
In general $V\qmod G$ is singular and $\DVmodG$ is difficult
to describe. We will be interested here in the case where
$V\qmod G$ is smooth, i.e.~isomorphic to $\C^\ell$ for some
$\ell \in \N$, in which case $\DVmodG$ is isomorphic to the
Weyl algebra $A_\ell(\C)$.  More precisely, we want to work
with polar representations as defined by J.~Dadoc and V.~Kac
in \cite{DK}. In this case there exists a Cartan subspace
$\h \subset V$, a finite subgroup $W \subset \GL(\h)$
generated by complex reflections ($W \simeq
N_G(\h)/Z_G(\h)$), such that the restriction map $\psi :
\CVG \to \ChW$, $\psi(f) = f_{\mid \h}$, is an
isomorphism. Thus $\psi$ yields the isomorphism $V\qmod G
\isomto \h/W \equiv\C^\ell$ and, consequently, an
isomorphism $\DVmodG \isomto \DhmodW \equiv A_\ell(\C)$.
Recall that among the polar representations one finds two
important classes:

-- the representations with a one dimensional quotient,
i.e.~$\dim V\qmod G =1$;

-- the class of ``theta groups''.

In the latter case there exists a semisimple Lie algebra
$\fs$ and a $\Z_m$-grading $\fs = \boplus_{i=0}^{m-1} \fs_i$
such that $\GV$ identifies with the representation of the
adjoint group of $\fs_0$ acting on $\fs_1$. This generalizes
the case of symmetric pairs $\GV = (K : \fp)$ where (with
obvious notation) $\fs= \fk \boplus \fp$ is the
decomposition associated to a complexified Cartan involution
on $\fs$. Here $\fh \subset \fp$ is a usual Cartan subspace
and $W$ is a Weyl group (cf.~\cite{Hel}).

Return to a general polar representation $\GV$. Combining
the morphism $\DVG \to \DVmodG$ with the isomorphism
$\DVmodG \isomto \DhmodW$ we get the {\em radial component
  map}:
$$
\rad : \DVG \lto \DhmodW, \quad \rad(D)(f) =
\psi(D(\psi^{-1}(f))), \ \; f \in \ChW.
$$
The morphism $\rad$ has proved to be useful in the
representation theory of semisimple Lie algebras, or
symmetric pairs $(\fs : \fk)$ as above, see, e.g.~\cite{Hel,
  Va, LS2, LS3}. Two obvious questions arise: describe the
algebra $R = \Im(\rad) \subset \DhmodW$ and the ideal $J=
\Ker(\rad) \subset \DVG$. Some answers have been given in
particular cases, see for example \cite{LS3,Sch2}, and it is
expected that the algebra $\DVG/J$ has a representation
theory similar to that of factors of enveloping algebras of
semisimple Lie algebras (cf.~\cite{Sch3}).

It is known that in the case $\GV = (K : \fp)$ of a
symmetric pair, the subalgebra $\rad\bigl(S(\fp)^K\bigr)$ of
$R$ can be described via the introduction Dunkl operators
\cite{Du, Heck, deJ, To1}. It is therefore natural to use
rational Cherednik algebras \cite{DO, EG, Gor} to describe
$R$. Recall that to each complex reflection group $(W :
\fh)$ is associated an algebra $\cH(k)$ where $k$ is a
``multiplicity function'' on the set of reflecting
hyperplanes in $\h$. Denoting by $\hreg$ the complement of
these hyperplanes, $\cH(k)$ is a subalgebra of the crossed
product $\cD(\hreg) \rtimes \CW$ generated by $\Ch$, $\CW$
and a subalgebra $\C[T_1\dots,T_\ell] \cong S(\h)$ where
each $T_i$ is a (generalized) Dunkl operator, see
\S\ref{ssec20} for details. If $\esf = \frac{1}{|W|} \sum_{w
  \in W} w \in \CW$ is the trivial idempotent, $\esf \cH(k)
\esf$ is called the spherical subalgebra. Then one can show
that there exists an injective homomorphism
$$
\res : \esf \cH(k) \esf \lto \cD(\fh/W)
$$
and we obtain in this way a family $U(k) = \res(\esf \cH(k)
\esf)$ of subalgebras of $\cD(\fh/W)$. One would like to
obtain information on $R$ by answering the following
question:
\begin{center}
  {\sl Does there exist a multiplicity function $k$ such that $R
  = U(k)$?}
\end{center}

For instance, suppose that $\GV= (K : \fp)$ as above.  The
reflecting hyperplanes are then parametrised by elements of
the reduced root system $\Rt$ defined by $(\fs, \h)$ and one
defines a multiplicity function by:
$$
k(\alpha) = \half\bigl(\dim \fs^\alpha + \dim
\fs^{2\alpha}\bigr), \ \; \alpha \in \Rt,
$$
where $\fs^\beta$ is the root space associated to the root
$\beta$. For this choice of $k$ one can prove \cite{LSx}:

\begin{theo} [L--Stafford]
  One has $R = \Im(\rad) = U(k) \cong \esf \cH(k) \esf $.
\end{theo}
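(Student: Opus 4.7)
The plan is to establish the two inclusions $U(k) \subseteq R$ and $R \subseteq U(k)$ separately, combining them with the injectivity of $\res$ recalled earlier to obtain the desired isomorphism. For the inclusion $U(k) \subseteq R$, I would first apply the Chevalley restriction theorem for the symmetric pair $(K : \fp)$, which via the isomorphism $\psi$ identifies $\rad(\C[\fp]^K)$ with $\ChW$. Then I would invoke the Heckman--Opdam theorem (extended to this setting by de~Jeu and Torossian) asserting that for every $D \in S(\fp)^K$ the radial component $\rad(D)$ equals the restriction to $W$-invariants of a $W$-invariant polynomial in the Dunkl operators $T_1, \ldots, T_\ell$ attached to the multiplicity $k(\alpha) = \half(\dim \fs^\alpha + \dim \fs^{2\alpha})$. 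Consequently $\rad(S(\fp)^K) = \res(\esf \ShW \esf)$. Since the spherical subalgebra $\esf \cH(k) \esf$ is generated as a $\C$-algebra by $\esf \Ch \esf \cong \ChW$ and $\esf S(\fh) \esf \cong \ShW$ (a consequence of the PBW decomposition $\cH(k) \cong \Ch \otimes \CW \otimes S(\fh)$), applying $\res$ shows that $U(k)$ itself is generated by $\ChW$ and the spherical Dunkl operators, both of which sit inside $R$.

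For the reverse inclusion $R \subseteq U(k)$, I would argue by filtration comparison. Endow $\DhmodW$ with the Bernstein filtration and $\DVG$ with the compatible filtration by order and degree; the map $\rad$ is strictly compatible with these. On associated gradeds, the PBW theorem for rational Cherednik algebras gives $\gr U(k) = \C[\fh \oplus \fh^*]^W$, while $\gr \DVG$ embeds in $\C[\fp \oplus \fp^*]^K$. The Kostant--Rallis theorem supplies the surjective restriction map $\C[\fp \oplus \fp^*]^K \twoheadrightarrow \C[\fh \oplus \fh^*]^W$ and shows that $\C[\fp \oplus \fp^*]^K$ is generated by the principal symbols of elements of $\C[\fp]^K$ and $S(\fp)^K$. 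An induction on the filtration degree then shows $\rad(D) \in U(k)$ for all $D \in \DVG$: at each step the leading symbol of $\rad(D)$ is expressed via the Kostant--Rallis generators, lifted to an element of $U(k)$ built from the generators produced in the first inclusion, and subtracted to lower the filtration degree.

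The main obstacle lies in this last lifting step: it is essential that the lift be chosen \emph{inside} $U(k)$ rather than merely in the ambient algebra $\DhmodW$. This is precisely where the first inclusion feeds back into the argument, since the Kostant--Rallis generators lift tautologically to elements of $\rad(\C[\fp]^K) \cup \rad(S(\fp)^K) \subseteq U(k)$. Once the induction closes one obtains $R = U(k) = \res(\esf \cH(k) \esf)$, and the injectivity of $\res$ then promotes this to the isomorphism $R \cong \esf \cH(k) \esf$ asserted in the theorem.
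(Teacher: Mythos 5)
Your statement is one the paper does not actually prove: it is quoted from the reference [LSx] (``in preparation''), so the only internal point of comparison is the paper's proof of the rank-one analogue, Theorem~\ref{thm38}. That proof proceeds by an entirely different route: it first shows $U\subseteq R$ and that $U$ and $R$ agree after inverting $z$ and after inverting $\delta$ (the latter via the antilinear anti-automorphism $\phi$ coming from a compact real form), and then concludes $U=R$ from a Gelfand--Kirillov dimension estimate together with the facts that $U$ is Auslander--Gorenstein and a maximal order. Nothing in your proposal resembles this; you instead attempt a symbol/filtration induction.

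The first half of your argument (the inclusion $U(k)\subseteq R$) is sound in outline and is exactly the input the paper itself attributes to Dunkl, Heckman, de~Jeu and Torossian, except for one misstatement: the generation of $\esf\cH(k)\esf$ by $\esf\Ch\esf$ and $\esf S(\fh)\esf$ is \emph{not} a consequence of the PBW decomposition. The PBW theorem only gives a vector-space splitting, and the corresponding commutative statement is false (see below); the generation result is a nontrivial theorem of Berest--Etingof--Ginzburg type. The second half contains genuine gaps. First, $\C[\fp\oplus\fp^*]^K$ is \emph{not} generated by the symbols of $\C[\fp]^K$ and $S(\fp)^K$: already for $(\fs,\fk)=(\fsl(2),\fso(2))$ the mixed invariant corresponding to the symbol of the Euler operator lies outside the subalgebra generated by the symbols of $f$ and $\Delta$. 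This is not what the Kostant--Rallis theorem says, and the existence of such mixed invariants is precisely the central difficulty in this subject (it is why $\cD(\fp)^K$ need not be generated by $\C[\fp]^K$ and $S(\fp)^K$, cf.~[LS1]). Second, the surjectivity of the restriction $\C[\fp\oplus\fp^*]^K\to\C[\fh\oplus\fh^*]^W$ is not Kostant--Rallis either; it is the restriction problem for the commuting variety of the symmetric pair, which is a hard statement not known in general. Third, you assert without proof that $\rad$ is ``strictly compatible'' with the filtrations, i.e.\ that the principal symbol of $\rad(D)$ is the restriction of the principal symbol of $D$; cancellations can and do occur, and controlling them is typically the crux of such arguments. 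Your induction therefore does not close. The paper's localization-plus-maximal-order strategy for Theorem~\ref{thm38} is designed precisely to sidestep all three of these obstructions, and a correct proof of the symmetric-pair theorem must either do the same or supply substantial additional input at each of the points above.
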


Our aim in this work is to analyse a simpler case, $G$
semisimple and $\dim V \qmod G = \dim \fh = 1$ (hence $W
\simeq \Z/n\Z$) and to give some applications of the radial
component map in this situation. The function $k$ is then
given by $n-1$ complex parameters $k_1,\dots,k_{n-1}$, and
$R$, $U(k)$ are subalgebras of the fist Weyl algebra
$\C[z,\dz]$.  The paper is organized as follows.

In \S \ref{sec2} we recall general facts about Cherednik
algebras and their spherical subalgebras in the one
dimensional case. We show (Proposition~\ref{prop29}) that
$U(k) = \widetilde{U}/(\Omega)$ where $\widetilde{U}$ is an
algebra similar to $U(\fsl(2))$ (as defined in \cite{Sm})
and $\Omega$ is a generator of the centre of
$\widetilde{U}$. This says in particular that the
representation of $U(k)$ is well understood (and already
known).

In the third section we assume that $V$ is a representation
of the reductive group $\tG$, $G$ is the derived group of
$\tG$ and $\CVG = \C[f]$ for a non constant $f$. Then it is
known that: $\tG$ acts on $V$ with an open orbit,
i.e.~$\tGV$ is a prehomogeneous vector space (PHV), $S(V)^G=
\C[\Delta]$, $\Delta(f^{s+1}) =b(s) f^s$ where $b(s) =
c(s+1)(s+\alpha_1)\cdots(s+\alpha_{n-1})$ is the
(Bernstein-)Sato polynomial of $f$.  Choosing $k_i =
\alpha_i -1 + \frac{i}{n}$, $1 \le i \le n-1$, we prove that
$R= U(k)$ (Theorem~\ref{thm38}).

In section~\ref{sec4} we assume furthermore that the
representation $\tGV$ is multiplicity free (MF). By
\cite{HU} this is equivalent to the fact that $\DVtG =
\C[E_0,\dots,E_r]$ is a commutative polynomial ring. If
$\Theta$ is the Euler vector field on $V$ one can find
polynomials $b_{E_i}(s)$ such that, if $\Omega_i= E_i -
b_{E_i}(\Theta)$, $J= \sum_{i=0}^r \DVG \Omega_i$
(Theorem~\ref{thm410}). We then give a duality (of Howe
type) between representations of $G$ and lowest weight
modules over the Lie algebra generated by $f$ and $\Delta$
(which is infinite dimensional when $\deg f \ge 3$). This
duality recovers, and extends, results obtained by
H.~Rubenthaler \cite{Rub1} when $\tGV$ is of commutative
parabolic type.

In the last section we specialize further to the case where
$\tGV$ is of ``Capelli type'', i.e.~$\tGV$ is an irreducible
MF representation such that $\DVtG$ is equal to the image of
the centre of $U(\tfg)$ under the differential $\tau : \ftg
\to \DV$ of the $\tG$-action. These representations have
been studied in \cite{HU}, they fall into eight cases (see
Appendix~\ref{A1}). It is not difficult to see that $J=
\DVtauG$ when $\tGV$ is of Capelli type
(Proposition~\ref{prop53}) . We first apply this result to
study $\Dv$-modules of the form $\cM(g,k) =
\DV/\bigl(\DV\tau(\g) + \DV q(\Theta)Q_k\bigr)$ where $q(s)$
is a polynomial and $Q_k = f^k$ or $\Delta^k$. We show in
Theorem~\ref{thm57} that $\cM(g,k)$ is holonomic if and only
if $q(s) \ne 0$. This has the well known consequence that
the space of hyperfunction solutions of $\cM(g,k)$ is finite
dimensional. These properties generalize results obtained by
M.~Muro \cite{Mu4, Mu5}. For the second application, recall
first the classical fact \cite{Kac} that if $\tGV$ is MF,
there is a finite number of $\tG$-orbits $\cO_i$, $1 \le i
\le t$, in $V$. Let $\tilde{\calC}= \bigcup_{i=1}^t
\overline{T_{O_i}^*V}$ be the union of the conormal bundles
to the orbits. P.~Nang has shown that, when $\tGV= (\SO(n)
\times \C^* : \C^n)$, $(\GL(n) \times \SL(n) : \Mat_n(\C))$
or $(\GL(2n) : \Wedge^2 \C^{2n})$, the category
$\mod_{\tilde{\calC}}^{\mathrm{rh}}(\Dv)$ of regular
holonomic $\Dv$-modules whose characteristic variety is
contained in $\tilde{\calC}$ is equivalent to the category
$\mod^\theta(R)$ of finitely generated $R$-modules on which
$\theta= z\dz$ acts locally finitely. These representations
are of Capelli type. We conjecture (see
Conjecture~\ref{conj514}) that when $\tGV$ is of Capelli
type the category $\mod^{G \times \C^*}(\Dv)$ of $(G \times
\C^*)$-equivariant $\Dv$-modules is equivalent to
$\mod^\theta(R)$. If $G$ is simply connected, $\mod^{G
  \times \C^*}(\Dv) =
\mod_{\tilde{\calC}}^{\mathrm{rh}}(\Dv)$ and the conjecture
covers Nang's results; since $\mod^\theta(R)$ can be easily
described as a quiver category (i.e.~finite diagrams of
linear maps) its validity would give a simple classification
of $(G \times \C^*)$-equivariant $\Dv$-modules. One can
observe (Proposition~\ref{prop513}) that, as in \cite{Nang1,
  Nang3, Nang5}, the proof of the conjecture reduces to show
that any $M \in \mod^{G \times \C^*}(\Dv)$ is generated by
its $G$-fixed points.

\section{Rational Cherednik Algebras of Rank One}
\label{sec2}

\subsection{The spherical subalgebra and its restriction}
\label{ssec20}
In this section we summarize some of the results we will
need about rational Cherednik algebras. We begin with some
general facts, see for example \cite{EG, DO, Gor, Heck}.

Let $\h$ be a complex vector space of dimension $\ell$ and
$W \subset \GL(\h)$ be an arbitrary complex reflection
group.  Denote by $\calA = \{H_s\}_{s \in \calS}$ the
collection of reflecting hyperplanes associated to $W$
(where $s \in \calS \subset W$ is a complex reflection). Let
$\alpha_s \in \h^*$ such that $H_s= \alpha_s^{-1}(0)$ is the
reflecting hyperplane associated to $s \in \calS$.  Fix $H=
H_s \in \calA$; recall that the isotropy group $W_H = \{w
\in W : w_{\mid H}= \id_H\}$ is cyclic of order $n_H$ (this
order only depends on the conjugacy class of $s$). Let
$\esf_{H,i} \in \C W_H$, $0 \le i \le n_H-1$, be the
primitive idempotents of $\C W_H$. Fix a family
$$
k_{H_s,i} \in \C, \quad H_s \in \calA, \ \, 0 \le i \le
n_{H_s}-1, \ \, k_{H_s,0} = 0,
$$
of complex numbers such that $k_{H_s,i}= k_{H_t,i}$ if $s,t
\in \calS$ are conjugate. Such a family $k=(k_{H,i})_{H,i}$
is called a multiplicity function.  Let $\hreg$ be the
complement of $\bigcup_{s \in \calS} H_s$ and set $\pi =
\prod_{s \in \calS} \alpha_s$.  The group $W$ acts naturally
on $\Ch=S(\h^*)$, $\C[\hreg] = \Ch[\pi^{-1}]$, hence on
$\End_\C \Ch$ and $\End_\C \C[\hreg]$. These actions
restrict to $\Dh$ and $\cD(\hreg) = \Dh[\pi^{-1}]$. Denote
by $\cD(\hreg) \rtimes \CW$ the crossed product of the
algebra $\cD(\hreg)$ by the group $W$. Recall that in that
algebra we have: $wfw^{-1} = w.f$, $w\partial(y)w^{-1}
= \partial(w.y)$ if $f \in \Ch$ and $\partial(y)$ is the
vector field defined by $y \in V$.


Then \cite{DO} one can introduce a subalgebra
$$
\cH = \cH(W,k) \subset \cD(\hreg) \rtimes \CW
$$
generated by three parts: $\Ch$, $W$, $\C[T(y) : y \in \h]
\cong \Sh$, where $T(y)$ is a Dunkl operator defined as
follows. Set $a_{H_s}(k) =
n_{H_s}\sum_{i=1}^{n_{H_s}-1}k_{H_s,i}\esf_{H_s,i} \in \C
W_{H_s}$ and
$$
T(y) = \partial(y) + \sum_{H_s \in \calA}
\dfrac{\dual{\alpha_s}{y}}{\alpha_s} a_{H_s}(k) \in
\cD(\hreg)\rtimes \CW.
$$
Denote by $\res : \cD(\hreg) \rtimes \CW \to
\End_\C\C[\hreg]$ the representation given by the natural
action of $W$ and $\cD(\hreg)$ on $\C[\hreg]$.  As observed
in \cite[\S 2.5]{DO} (see also \cite[Proposition~4.5]{EG})
$\res(\cH) \subset \End_\C\Ch$ and this gives a natural
structure of faithful $\cH$-module on $\Ch$, i.e.~we have an
injective homomorphism:
$$
\res : \cH \lto \End_\C \Ch.
$$
The group $W$ acts on $\cH$, $\cD(\hreg)\rtimes \CW$ and
$\End_\C \Ch$ by conjugation, i.e~$w.u = wuw^{-1}$. Denote
by $\cH^W \subset (\cD(\hreg)\rtimes \CW)^W$ and $(\End_\C
\Ch)^W$ the algebras of invariants under this action. Notice
that if $u \in \cH$, $w \in W$ and $f \in \Ch$, we have:
$\res(w.u)(f) = \res(wuw^{-1})(f)= \res(wu)(w^{-1}.f) =
w.\res(u)(w^{-1}.f) =(w.\res(u))(f)$.  Thus the homomorphism
$\res$ is $W$-equivariant and, in particular, $\res : \cH^W
\to (\End_\C \Ch)^W$.  Therefore $w.\res(u)(f) =
w.\res(u)(w^{-1}.f) = (w.\res(u))(f) = \res(u)(f) $, for all
$u \in \cH^W$, $w \in W$, $f \in \ChW$. We have obtained the
following representation of $\cH^W$ on $\ChW$:
$$
\res : \cH^W \lto \End_\C \ChW.
$$
(This morphism is not injective when $W \ne \{1\}$.)  Let
$$
\mathsf{e}= \frac{1}{|W|} \sum_{w \in W} w
$$
be the trivial idempotent and define the {\em spherical
  subalgebra}:
\begin{equation}
  \label{eq01}
  \Sph= \sph \subset \cH^W.
\end{equation}
Observe that $\Sph$ is an algebra whose unit is equal to
$\esf$. From the previous discussion we obtain $\Sph \subset
\esf(\cD(\hreg)\rtimes \CW)\esf$. It is not difficult to
show that $\esf(\cD(\hreg)\rtimes \CW)\esf =
\esf\cD(\hreg)^W \cong \cD(\hreg)^W$. It follows that $u\in
\sph$ can be written $u = \esf d$ for some $d \in
\cD(\hreg)^W$, hence $\res(u)(f) = d(f)$ for all $f \in
\ChW$. This implies that $\res(u) \in \End_\C \ChW$ acts as
the differential operator $d$ on $\ChW$. Consequently,
$\res(u) \in \cD(\h/W) = \cD(\ChW) \subset \cD(\hreg/W)
\cong \cD(\hreg)^W$.  Furthermore it is easy to see that
$d=0$ on $\ChW$ implies $d=0$, hence $u=0$. In conclusion:
one has the injective restriction morphism (see \cite{Heck}
in the case of a Weyl group):
\begin{equation}
  \label{eq02}
  \res: \sph \lto \DhmodW, \quad \forall \, f \in \ChW, \, D
  \in \cH^W, \
  \res(\mathsf{e}D)(f) = \res(D)(f).
\end{equation}
We set:
\begin{equation}
  \label{eq03}
  U= U(W,k) = \res(\sph) \subset \DhmodW.
\end{equation}

\subsection{The one dimensional case}
\label{ssec21}
We now go the most simplest case of the previous
construction: the case when $\ell = \dim \h =1$.

\begin{notation} Let $\h = \C v$ be a one dimensional vector
  space and $W \subset \GL(\h)$ be a finite subgroup of
  order $n$. We adopt the following notation.
  \begin{itemize}
  \item $\Ch=\Shs = \C[x]$, $\dual{x}{v}=1$,
    $\Dh=\C[x,\partial_x]$;
  \item $W = \langle w \rangle \simeq \Z/n\Z$, $w.x = \zeta
    x$ where $\zeta$ is a primitive $n$-th root of unity;
  \item $\ChW = \C[z]$, $z=x^n$, $\DhmodW=
    \C[z,\partial_z]$, $\theta = z\dz$;
  \item $\esf_0 = \esf, \esf_1,\dots, \esf_{n-1} \in \CW$
    are the primitive idempotents (hence $\esf_i =
    \frac{1}{n} \sum_{j=0}^{n-1} \zeta^{ij}w^j$);
  \item $k_0 =0,k_1,\dots,k_{n-1} \in \C$;
  \item $\displaystyle{T = T(v) = \partial_x +
      \frac{n}{x}\sum_{i=1}^{n-1} k_i \esf_i} \in \C[x^{\pm
      1},\partial_x] \rtimes \CW$;
  \item if $p(s) \in \C[s]$ is a polynomial, set: $\tau p(s)
    = p(s+1) - p(s)$, $\tau^{j+1}p(s) = \tau(\tau^{j}p)(s)$,
    $p^*(s) = p(s-1)$.
  \end{itemize}
\end{notation}


The following well known lemma will prove useful 
(see~\cite{Kn1} for a more general statement).

\begin{lem}
  \label{lem21}
  Let $Q \in \C[z,\partial_z]$ satisfying:
  $$
  \exists \, p \in \Z, \quad \forall \, m \in \N, \quad
  Q(z^m) \in \C z^{m+p}.
  $$
  Then there exists a polynomial $\vphi(s) \in \C[s]$ of
  degree $d$ such that: $Q$ has order $d$ and can be written
  $$
  Q = z^p \vphi(\theta) = \sum_{j=0}^d q_j(z) \dz^{j}
  $$
  where
  $$
  q_j(z) = \frac{1}{j!}(\tau^{j}\vphi)(0) z^{j+p} \ \;
  \text{and $(\tau^{j}\vphi)(0) = 0$ if $p+j <0$.}
  $$
\end{lem}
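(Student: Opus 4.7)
The plan is to exploit the Euler operator basis and reduce the problem to a polynomial identity in~$m$. First I write any $Q \in \C[z,\partial_z]$ in the normal form
$Q = \sum_{j=0}^d q_j(z)\, \partial_z^j$ with $q_j(z) = \sum_k c_{j,k} z^k \in \C[z]$ and $q_d \ne 0$, so that $d$ is the order of $Q$. Using $\partial_z^j(z^m) = (m)_j z^{m-j}$ with $(m)_j := m(m-1)\cdots(m-j+1)$, I expand
\[
Q(z^m) \;=\; \sum_{\ell \in \Z} \Bigl(\sum_{j} c_{j, j+\ell}\, (m)_j\Bigr) z^{m+\ell}.
\]
The hypothesis $Q(z^m) \in \C z^{m+p}$ forces, for each fixed $\ell \ne p$, the expression $A_\ell(m) := \sum_j c_{j,j+\ell}(m)_j$ to vanish for every $m \in \N$.

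Since $A_\ell$ is a polynomial in $m$ vanishing at infinitely many points, $A_\ell \equiv 0$, and because the falling factorials $\{(m)_j\}_{j \ge 0}$ form a basis of $\C[m]$, I conclude $c_{j, j+\ell} = 0$ for all $j$ whenever $\ell \ne p$. Thus $q_j(z) = c_j\, z^{j+p}$ with $c_j := c_{j, j+p}$, and $c_j$ must vanish when $j + p < 0$ for $q_j$ to be a polynomial. Using the identity $z^j \partial_z^j = \theta(\theta-1)\cdots(\theta-j+1) = (\theta)_j$, I rewrite
\[
Q \;=\; z^p \sum_{j=0}^d c_j (\theta)_j \;=\; z^p\, \varphi(\theta),\qquad \varphi(s) := \sum_{j=0}^d c_j (s)_j.
\]
Since $c_d \ne 0$ and $\deg (s)_j = j$, the polynomial $\varphi$ has degree exactly $d$, matching the order of $Q$.

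To identify the coefficients $c_j$ with $\frac{1}{j!}(\tau^j \varphi)(0)$, I invoke Newton's forward difference expansion: an easy induction shows $\tau^j (s)_i = (i)_j (s)_{i-j}$, hence $(\tau^j (s)_i)(0) = j!\,\delta_{ij}$. Applied termwise to $\varphi = \sum_i c_i (s)_i$, this gives $(\tau^j\varphi)(0) = j!\, c_j$, yielding the stated formula $q_j(z) = \frac{1}{j!}(\tau^j\varphi)(0)\, z^{j+p}$; the vanishing condition $(\tau^j\varphi)(0)=0$ when $p+j<0$ is precisely the polynomiality requirement $c_j = 0$ noted above. There is no substantial obstacle here: the only subtlety is the bookkeeping around the boundary $j+p<0$ and being careful that the linear independence argument is applied to honest polynomials in $m$, which follows from the basis property of $\{(m)_j\}$.
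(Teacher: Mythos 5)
Your proof is correct. The paper states this lemma without proof, labelling it ``well known'' and citing Knop for a more general statement, so there is nothing to compare against; your argument (normal-form expansion, linear independence of the falling factorials $(m)_j$ in $\C[m]$, the identity $z^j\partial_z^j=(\theta)_j$, and Newton's forward-difference formula to identify the coefficients as $\frac{1}{j!}(\tau^j\vphi)(0)$) is the standard one and handles the boundary case $j+p<0$ correctly.
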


\begin{rem}
  \label{rem22}
  One can define the algebra $\C[z^\alpha : \alpha \in \Q]$
  by adjoining roots of polynomials of the form $t^p - z$,
  $p \in \N$ prime. The derivation $\dz$ is naturally
  defined on this algebra by $\dz(z^\alpha) = \alpha
  z^{\alpha -1}$. Let $Q$ be as in Lemma~\ref{lem21}; then
  $Q$ extends to $\C[z^\alpha : \alpha \in \Q]$ by
  $Q(z^\alpha) = \sum_j q_j(z) \dz(z^\alpha) =
  \vphi(\alpha)z^{\alpha +p}$.
\end{rem}

The next lemma is straightforward by direct computation.

\begin{lem}
  \label{lem23}
  The following formulas hold:

  \noindent {\rm (a)} $[\esf_i,x] = x(\esf_{i+1} -
  \esf_{i})$ (where $\esf_n = \esf_0 = \esf)$;

  \noindent {\rm (b)} $[T,x] = 1 + n
  \sum_{i=1}^{n-1}k_i(\esf_{i+1} - \esf_i) = 1 + n
  \sum_{i=0}^{n-1}(k_i - k_{i+1}) \esf_i$;

  \noindent {\rm (c)} $wTw^{-1} = \zeta^{-1}T$;

  \noindent {\rm (d)} let $p \in \N$ and define $q \in
  \{0,\dots,n-1\}$ by $p + q \equiv 0 \pmod{n}$, then
  $T(x^p) = (n k_q + p)x^{p-1}$;

  \noindent {\rm (e)} let $1 \le j \le n$ and $s \in \N$,
  then
  $$
  T^j(x^{sn}) = \prod_{i=1}^j(nk_{i-1} + sn - i +1) x^{sn
    -j},
  $$
  in particular $(T/n)^n(z^s) = \prod_{i=0}^{n-1}(s + k_i -
  i/n) z^{s -1}$.
\end{lem}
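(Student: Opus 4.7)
The plan is to carry out a direct computation using only the explicit formula $\esf_i = \frac{1}{n}\sum_{j=0}^{n-1}\zeta^{ij}w^j$ for the primitive idempotents, the commutation relation $wx = \zeta xw$ (which is equivalent to $w.x = \zeta x$), and the definition of $T$. Every assertion reduces to index bookkeeping once these two facts are in hand.

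First, for (a), I would write $\esf_i x = \frac{1}{n}\sum_j \zeta^{ij}w^j x = \frac{1}{n}\sum_j \zeta^{ij}\zeta^j xw^j = x\esf_{i+1}$, which gives $[\esf_i, x] = x(\esf_{i+1} - \esf_i)$ directly. Part (b) then follows from $[\partial_x, x] = 1$ and the computation $[\frac{n}{x}\esf_i, x] = \frac{n}{x}[\esf_i, x] = n(\esf_{i+1} - \esf_i)$, which uses (a) and the commutation of $x$ with $\frac{1}{x}$. The first form of $[T,x]$ is immediate; the second form is obtained by summation by parts, reindexing $j = i+1$ modulo $n$ and using the cyclic conventions $\esf_n = \esf_0$ and $k_0 = 0$. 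For (c), I would observe that $w\partial_x w^{-1} = \zeta^{-1}\partial_x$ (check by evaluating on $x$), that $w\frac{1}{x}w^{-1} = \zeta^{-1}\frac{1}{x}$ (since $w.x = \zeta x$), and that $w$ commutes with each $\esf_i$ because $W$ is abelian; multiplying through, every term of $T$ picks up the factor $\zeta^{-1}$.

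For (d), compute $\esf_i(x^p) = \frac{1}{n}\sum_j \zeta^{(i+p)j} x^p$, which equals $x^p$ if $i+p \equiv 0 \pmod n$ and $0$ otherwise. Thus in the sum defining $T(x^p)$ only $i = q$ survives (when $q \ne 0$), giving $T(x^p) = px^{p-1} + nk_q x^{p-1} = (nk_q + p)x^{p-1}$; the case $q = 0$ is absorbed into the same formula because $k_0 = 0$ and the idempotent $\esf_0$ is excluded from the sum in $T$. Part (e) is then proved by induction on $j$: at each step the exponent of the current monomial is $sn - (j-1)$, whose residue modulo $n$ is $j - 1 \in \{0,1,\dots,n-1\}$ for $1 \le j \le n$, so applying (d) contributes the factor $(nk_{j-1} + sn - j + 1)$. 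The $(T/n)^n(z^s)$ formula follows by setting $j = n$, dividing by $n^n$, reindexing $i \mapsto i-1$, and using $z^s = x^{sn}$, $z^{s-1} = x^{(s-1)n}$.

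I do not expect any genuine obstacle: the lemma is labelled as straightforward and the content is purely formal manipulation of the Dunkl operator on the cyclic group $\Z/n\Z$. The only real point of care is in (b), where cyclic indexing of the $\esf_i$ and of the $k_i$ (with the boundary conventions $\esf_n = \esf_0$ and $k_0 = 0$) must be applied consistently to match the two stated forms of $[T,x]$; and in (e), where one must verify that the exponents $sn - j + 1$ for $1 \le j \le n$ produce the residues $0, 1, \dots, n-1$ in this exact order, so that the sequence of $k_{q_j}$'s reads $k_0, k_1, \dots, k_{n-1}$.
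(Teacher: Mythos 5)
Your computation is exactly the ``direct computation'' the paper has in mind (it offers no proof of this lemma), and parts (a), (c), (d), (e) are handled correctly; in particular the induction for (e) via the residue bookkeeping $q=j-1$ at step $j$ is right, and the reduction of (d) to $\esf_i(x^p)=x^p$ iff $i+p\equiv 0 \pmod n$ is the key point.

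The one place you should not defer the check is the second displayed form in (b), which you announce as following ``by summation by parts'' but do not carry out. If you actually reindex $\sum_{i=1}^{n-1}k_i(\esf_{i+1}-\esf_i)$ with $j=i+1$ (using $\esf_n=\esf_0$ and $k_0=0$) you obtain $\sum_{i=0}^{n-1}(k_{i-1}-k_i)\esf_i$ with indices read modulo $n$, \emph{not} the printed $\sum_{i=0}^{n-1}(k_i-k_{i+1})\esf_i$; the two differ by a cyclic shift (e.g.\ the coefficient of $\esf_0$ is $k_{n-1}$, not $-k_1$). One can confirm this independently from (d): $[T,x](x^p)=\bigl(1+n(k_{q-1}-k_q)\bigr)x^p$ with $q\equiv -p$. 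So the printed second form in (b) is a misprint, and your proof as written asserts an identity that is false as stated; you should either correct the index shift or note the typo explicitly rather than claim the two expressions agree.
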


We now introduce the rational Cherednik algebra, and its
spherical subalgebra, in the rank one case.

\begin{defn}
  \label{def24}
  The rational Cherednik algebra associated to $W$ with
  parameters $k_i$, $0 \le i \le n-1$, is the subalgebra of
  $\C[x^{\pm 1},\partial_x] \rtimes \CW$ defined by:
  $$
  \cH= \cH(W,k_0,\dots, k_{n-1})= \C \langle x ,T, w \rangle
  $$
  Its spherical subalgebra is $\Sph$.
\end{defn}

Observe that when $n=1$ (i.e.~$W$ trivial) the algebra $\cH
= \Sph$ is nothing but $\Dh=\C[x,\dx]$ and all the results
we are going to obtain are in this case obvious. {\em We
  therefore will only be interested in the case $n \ge 2$}.

It is easily seen that:
\begin{itemize}
\item $\Sph = \sph= \C \langle \esf, \esf x^n, \esf (T/n)^n,
  \esf xT/n \rangle$;
\item the image $U = \res(\Sph)$ of the injective
  homomorphism, defined in~\eqref{eq02},
  \begin{equation}
    \label{eq25}
    \res : \sph \lto \DhmodW = \C[z,\partial_z], 
  \end{equation}
  is generated by $z$, $\res\bigl(\esf (T/n)^n\bigr)$ and
  $\res(\esf xT/n)$.
\item there exists a finite dimensional filtration on $\Sph$
  such that the associated graded algebra $\gr(\Sph)$ is
  isomorphic to $S(\h^* \times \h)^W \equiv \C[X,Y,S]/(XY -
  S^n)$, cf.~\cite[p.~262]{EG} (one has $X \equiv \gr(\esf
  x^n), Y \equiv \gr(\esf T^n), S \equiv \gr(\esf xT)$).
\end{itemize}

Fix a constant $c \in \C^*$ and set:
\begin{gather}
  \lambda_i = k_i -\frac{i}{n}, \ \; b^*(s) = c
  \prod_{i=0}^{n-1}(s + \lambda_i), \ \; b(s) = b^*(s+1) = c
  \prod_{i=0}^{n-1}(s + \lambda_i + 1) \label{eq26}
  \\
  v(s) = -2b(-s), \quad \psi(s) = \half(\tau v)(s) = b(-s) -
  b(-s-1) \label{eq27}.
\end{gather}

\begin{prop}
  \label{prop28}
  Set $\delta = c \res\bigl(\esf (T/n)^n\bigr)$. Then $U =
  \res(\Sph) = \C[z,\theta,\delta]$ and one has:
  
  {\rm (1)} $\delta = z^{-1}b^*(\theta) = \sum_{j=1}^n
  \frac{1}{j!}(\tau^jb^*)(0) z^{j-1}\dz^j$;
  
  {\rm (2)} $\res(\esf xT/n) = \theta$;
  
  {\rm (3)} $[\delta,z]= \psi(-\theta) = b(\theta) -
  b(\theta-1) = (\tau b^*)(\theta)$;
  
  {\rm (4)} $[\theta,z] = z$, $[\theta,\delta] = -\delta$;
  
  {\rm (5)} $2z\delta + v(-\theta + 1) = 2\bigl(z\delta -
  b^*(\theta)\bigr) = 0$.
\end{prop}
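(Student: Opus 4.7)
The plan is to evaluate everything on the monomial basis $\{z^s : s \in \N\}$ of $\ChW = \C[z]$ (with $z = x^n$) and then invoke Lemma~\ref{lem21} to extract closed forms in $\DhmodW$.

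First I would establish the key identity $\delta(z^s) = b^*(s) z^{s-1}$ for all $s \in \N$. Since $T^n$ is $W$-invariant by Lemma~\ref{lem23}(c), the element $\esf(T/n)^n \in \cH^W$ acts on $z^s = x^{sn} \in \ChW$ as $(T/n)^n$, and Lemma~\ref{lem23}(e) gives
\[
(T/n)^n(z^s) = \prod_{i=0}^{n-1}(s + k_i - i/n)\, z^{s-1} = \prod_{i=0}^{n-1}(s + \lambda_i)\, z^{s-1},
\]
so $\delta(z^s) = c \prod_{i=0}^{n-1}(s+\lambda_i)\, z^{s-1} = b^*(s) z^{s-1}$ by~\eqref{eq26}. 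Applying Lemma~\ref{lem21} to $\delta$ with $p = -1$ and $\vphi = b^*$ of degree $n$ then yields assertion~(1):
\[
\delta = z^{-1} b^*(\theta) = \sum_{j=1}^n \frac{1}{j!}(\tau^j b^*)(0)\, z^{j-1}\dz^j,
\]
the sum starting at $j = 1$ because $b^*(0) = 0$ thanks to $k_0 = 0$.

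For~(2), Lemma~\ref{lem23}(e) with $j=1$ and $k_0 = 0$ gives $T(x^{sn}) = sn\, x^{sn-1}$, whence $(xT/n)(z^s) = s\, z^s = \theta(z^s)$, so $\res(\esf xT/n) = \theta$. The generation statement $U = \C[z,\theta,\delta]$ then follows from the observation recorded just before the proposition that $U = \res(\Sph)$ is generated by $z$, $\res(\esf(T/n)^n) = c^{-1}\delta$, and $\res(\esf xT/n) = \theta$.

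Identities~(3)--(5) are obtained by evaluating both sides on $z^s$ and using the two formulas just established. For~(4), $[\theta,z]= z$ is standard, and
\[
[\theta,\delta](z^s) = \theta\bigl(b^*(s) z^{s-1}\bigr) - \delta(s\, z^s) = (s-1)b^*(s) z^{s-1} - s\, b^*(s) z^{s-1} = -\delta(z^s).
\]
For~(3), $[\delta,z](z^s) = b^*(s+1) z^s - b^*(s) z^s = (\tau b^*)(s) z^s$, so $[\delta,z] = (\tau b^*)(\theta)$, and the definitions $b(s) = b^*(s+1)$ and $\psi(s) = b(-s) - b(-s-1)$ identify this with $b(\theta) - b(\theta-1) = \psi(-\theta)$. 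For~(5), $(z\delta)(z^s) = b^*(s) z^s = b^*(\theta)(z^s)$ so $z\delta = b^*(\theta)$, and by~\eqref{eq27} $v(-\theta+1) = -2 b(\theta-1) = -2 b^*(\theta)$, giving $2z\delta + v(-\theta+1) = 0$. The only real obstacle is keeping track of the reindexings between $k_i$, $\lambda_i$, $b$, $b^*$, $\psi$ and $v$ and verifying that Lemma~\ref{lem21} applies cleanly; once $\delta(z^s) = b^*(s) z^{s-1}$ is in hand, everything reduces to elementary operator calculus on $\C[z]$.
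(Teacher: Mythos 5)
Your proof is correct and follows essentially the same route as the paper: establish $\delta(z^s)=b^*(s)z^{s-1}$ and $\res(\esf xT/n)=\theta$ from Lemma~\ref{lem23}(e), invoke Lemma~\ref{lem21} with $\vphi=b^*$, $p=-1$ for the closed form in~(1), and deduce (3)--(5) by evaluating on the monomials $z^s$ (the paper simply declares (4) and (5) obvious, which you spell out). The bookkeeping with $b$, $b^*$, $v$, $\psi$ and the vanishing $b^*(0)=0$ is handled correctly.
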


\begin{proof}
  The equality $U = \C[z,\theta,\delta]$ is clear.
  
  (1) From the definition of the map $\res$,
  cf.~\eqref{eq02}, and Lemma~\ref{lem23}(e) we deduce that
  $\delta(z^s) = c(T/n)^n(z^s) = b^*(s) z^{s-1}$. The claim
  therefore follows from Lemma~\ref{lem21} applied to $Q=
  \delta$, $\vphi = b^*$ and $p=-1$.
  
  (2) By Lemma~\ref{lem23}(e) again we get that
  $\res(xT)(z^s) = xT(x^{sn}) = sn x^{sn}= n sz^s$,
  i.e.~$\res(\esf xT/n) = \theta$.
  
  (3) Using (1) we obtain that $[\delta,z](z^s) = (\tau
  b^*)(s)z^s = (\tau b^*)(\theta)(z^s)$, hence $[\delta,z] =
  (\tau b^*)(\theta)$.
  
  The formulas in (4) and (5) are obvious.
\end{proof}

\subsection{Algebras similar to $U(\fsl(2))$}
\label{ssec22}
We recall here the definition, and some properties, of the
algebras similar to $U(\fsl(2))$ introduced in \cite{Jo} and
\cite{Sm}.

Let $\psi(s) \in \C[s]$ be an arbitrary polynomial of degree 
$\ge 1$ and write $\psi = \half\tau v$ for some $v \in
\C[s]$ of degree $n \ge 2$. Define a $\C$-algebra
$\widetilde{U}$ by generators and relations as follows
(cf.~\cite{Sm}):
$$
\widetilde{U}= \widetilde{U}(\psi) =C \langle A,B,H \rangle,
\quad [A,B] -\psi(H) = 0, \quad [H,A] - A = 0, \quad [H,B] +
B = 0.
$$
Note that when $\deg \psi = 1$, i.e.~$n=2$, one has
$\widetilde{U} = U(\fsl(2))$.  The algebra $\widetilde{U}$
has the following properties, see \cite{Sm,JvO,MVdB}.
\begin{itemize}
\item The centre of $\widetilde{U}$ is $Z(\widetilde{U}) =
  \C[\Omega]$, $\Omega = 2BA + v(H+1) = 2AB + v(H)$.
\item For $\lambda \in \C$ one defines the ``Verma module''
  $M(\lambda) = \widetilde{U} \otimes_{\C[H,A]} \C_\lambda$,
  where $\C_\lambda$ is the one dimensional module
  associated to $\lambda$ over the solvable Lie algebra $\C
  A + \C H$.
\item Each $M(\lambda)$ has a unique simple quotient
  $L(\lambda)$ and any finite dimensional
  $\widetilde{U}$-module is of the form $L(\lambda)$ for
  some $\lambda$.
\item The primitive ideals of $\widetilde{U}$ are the
  annihilators $\ann L(\lambda)$; the minimal primitive
  ideals are the $\ann M(\lambda) = (\Omega -
  v(\lambda+1))$, they are completely prime. If $I$ is an
  ideal strictly containing $\ann M(\lambda)$, then $\dim_\C
  \widetilde{U}/I$ is finite.
\item One can define in an obvious way a category $\cO$ for
  $\widetilde{U}$ which decomposes as:
  $$
  \cO = \bigsqcup_\alpha \cO_\alpha, \quad \cO_\alpha= \{ M
  \in \cO : (\Omega - \alpha)^k M = 0 \ \text{for some
    $k$}\}.
  $$
  Moreover $\cO_\alpha \equiv \mathrm{mod} A$ for a finite
  dimensional $\C$-algebra $A$.
\end{itemize}
The representation theory of the algebras
$\widetilde{U}/(\Omega - v(\lambda+1))$ is therefore quite
well understood.

We will be interested in the algebra $U =
\widetilde{U}/(\Omega) = \C[a,b,h]$ where $a,b,h$ are the
classes of $A,B,H$. We have in $U$:
$$
[a,b] = \psi(h) = \half (\tau v)(h), \quad [h,a] = a, \quad
[h,b] = -b, \quad 2ab = -v(h).
$$
For simplicity we will assume that $v(1) =0$. Recall then
that $M(0) \equiv \C[b]$ is a faithful $U$-module where
$h.b^k= -k b^k$, $b.b^k = b^{k+1}$, $a.b^k =
\bigl(\sum_{i=1}^{k-1}\psi(-i)\bigr)b^{k-1}$ for all $k \ge
0$. We want to study the Lie subalgebra $\calL$ of
$(U,[\phantom{x},\phantom{x}])$ generated by the elements
$a,b$. Recall \cite{Sm} that when $n \le 2$ this algebra is
finite dimensional. The algebra $\calL$ acts on $\C[b]\equiv
M(0)$ and for each $i \in \Z$ we set:
$$
\calL_i = \{u \in \calL : u.b^k \in \C b^{k+i} \ \text{for
  all $k \in \N\}$}.
$$
Clearly: $a \in \calL_{-1}$, $h \in \calL_0$, $b \in
\calL_1$.

\begin{lem}
  \label{lem281} {\rm (1)} The element $h$ is transcendental
  over $\C$.

  \noindent {\rm (2)} For $g(h) \in \C[h]$ we have
  $[b,ag(h)] = -\half \tau(vg^*)(h)$.

  \noindent {\rm (3)} There exists a sequence $(g_m(h))_{m
    \in \N} \subset \calL \cap \C[h]$ such that $\deg g_m =
  (m+1)n -(2m+1)$. In particular, $\deg g_m < \deg g_{m+1}$
  when $n \ge 3$.
\end{lem}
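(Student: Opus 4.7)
The plan is to exploit the faithful action of $U$ on $M(0)\equiv\C[b]$ together with the two basic commutation rules $bg(h) = g(h+1)\,b$ and $ag(h) = g(h-1)\,a$ for any $g\in\C[h]$, both immediate consequences of $[h,b]=-b$ and $[h,a]=a$, in conjunction with the identities $[a,b]=\psi(h)=\half(\tau v)(h)$ and $2ab=-v(h)$. Part~(1) is then immediate: because $h\cdot b^k=-k\,b^k$ for every $k\in\N$ and each $b^k$ is nonzero in the faithful module $M(0)$, any $P\in\C[s]$ with $P(h)=0$ in $U$ must satisfy $P(-k)=0$ for all $k\in\N$, hence $P=0$.

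For (2) I apply the derivation property of $[b,\,\cdot\,]$ to write $[b,ag(h)] = [b,a]\,g(h) + a\,[b,g(h)]$. The first commutation rule gives $[b,g(h)] = (\tau g)(h)\,b$, and then $a\,(\tau g)(h)\,b = (\tau g^*)(h)\,ab = -\half v(h)(\tau g^*)(h)$ using $ag(h)=g(h-1)a$ (which turns $(\tau g)(h-1)$ into $(\tau g^*)(h)$) together with $2ab=-v(h)$. Combining with $[b,a]\,g(h)=-\half(\tau v)(h)\,g(h)$ and the Leibniz-type identity $\tau(v g^*)(h) = (\tau v)(h)\,g(h) + v(h)\,(\tau g^*)(h)$ for the forward difference operator yields the claimed formula $[b,ag(h)] = -\half\,\tau(v g^*)(h)$.

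For (3) I proceed by induction. Set $g_0 := [a,b] = \psi \in \calL\cap\C[h]$, of degree $d_0 = n-1$ with leading coefficient $\half n\,c_0 \ne 0$, where $c_0$ denotes the leading coefficient of $v$. Assuming $g_m \in \calL\cap\C[h]$ has degree $d_m = (m+1)n-(2m+1)$ with nonzero leading coefficient $\gamma_m$, define $g_{m+1} := \bigl[b,[g_m,a]\bigr] \in \calL$. Using $g_m(h)\,a = a\,g_m(h+1)$ I find $[g_m,a] = a\,(\tau g_m)(h)$, and part~(2) then gives $g_{m+1} = -\half\,\tau\bigl(v\cdot(\tau g_m)^*\bigr)(h) \in \C[h]$. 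A direct leading-term calculation yields $\deg g_{m+1} = n+(d_m-1)-1 = d_{m+1}$ with leading coefficient $\gamma_{m+1} = -\half\,c_0\,d_m(n+d_m-1)\,\gamma_m$; both factors $d_m$ and $n+d_m-1$ are strictly positive when $n\ge 2$, so $\gamma_{m+1}\ne 0$. The strict inequality $d_m<d_{m+1}$ for $n\ge 3$ is then immediate from $d_{m+1}-d_m=n-2$.

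The only delicate point is the leading-coefficient tracking in the inductive step of (3): one must verify that neither $d_m$ nor $n+d_m-1$ vanishes, but both are manifestly positive for every $m\ge 0$ under the standing assumption $n\ge 2$. All other manipulations are routine applications of the commutation rules and the product rule for the finite difference $\tau$.
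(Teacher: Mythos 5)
Your proof is correct and follows essentially the same route as the paper: part (1) via the faithful module $M(0)$, part (2) by the Leibniz rule for $[b,\cdot\,]$ together with $2ab=-v(h)$ and the difference-operator product rule, and part (3) by the recursion $g_{m+1}=[b,[g_m,a]]=-\half\tau\bigl(v(\tau g_m)^*\bigr)(h)$. Your explicit tracking of the leading coefficients in (3) is a harmless refinement (the paper relies on the standard fact that $\tau$ drops the degree of a nonconstant polynomial by exactly one), and all your computations check out.
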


\begin{proof}
  (1) This follows, for example, from $g(h).b^k = g(-k)b^k$
  in $M(0)$ for all $g(h) \in \C[h]\subset U$.

  \noindent (2) Recall \cite[Appendix]{Sm} that $[b,g(h)] =
  b(g(h) - g(h-1))$ and $[g(h),a] = a \tau(g)(h)$. Thus:
  \begin{align*} [b,ag(h)] &= [b,a]g(h) + a[b,g(h)] = -\half
    (\tau v)(h) g(h)
    +ab (g(h) - g(h-1)) \\
    &= -\half(v(h+1) - v(h))g(h) - \half v(h)(g(h) - g(h-1))\\
    & = -\half(v(h+1)g(h) - v(h)g(h-1)) \\
    &= -\half (\tau (vg^*))(h),
  \end{align*}
  as desired.
  
  (3) We start with $g_0(h) = \psi(h) = [a,b] \in \calL$.
  Then, $\deg g_0 = n - 1$ and, by (2), $[b,a(\tau g_0)(h)]
  = -\half \tau(v(\tau g_0)^*)(h) \in \calL$. We thus set
  $g_1(h)= -\half \tau(v(\tau g_0)^*)(h)$; note that $\deg
  \tau(v(\tau g_0)^*) = \deg v + \deg \tau g_0 - 1 = 2n -3$.
  Suppose that $g_m(h) \in \calL$ has been obtained; then
  $[g_m(h),a] = a (\tau g_m)(h) \in \calL$ and
  $[b,[g_m(h),a]] = -\half \tau(v(\tau g_m)^*)(h) \in
  \calL$. Set $g_{m+1}(h) = \tau(v(\tau g_m)^*)(h)$. We have
  $\deg g_{m+1} = \deg v + \deg g_m -2 = (m+1)n -(2m+1) + n
  -2 = (m+2)n - (2(m+1) + 1)$, hence the result.
\end{proof}

\begin{prop}
  \label{prop282}
  Assume that $n \ge 3$. Then $\dim_\C \calL_i = \infty$ for
  all $i \in \Z$.
\end{prop}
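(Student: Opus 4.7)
The plan is to produce, for every $i \in \Z$, an explicit infinite sequence of linearly independent elements of $\calL_i$ by iteratively applying $\ad a$ or $\ad b$ to the polynomials $g_m(h)$ of Lemma~\ref{lem281}(3). The hypothesis $n \geq 3$ enters precisely through the degree formula $\deg g_m = (m+1)n - (2m+1) = m(n-2) + (n-1)$, which becomes strictly increasing in $m$; so $\{g_m\}_{m \geq 0}$ already exhibits $\dim_\C \calL_0 = \infty$, settling the case $i=0$.

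For $i \geq 1$, I would argue by induction on $i$, starting from the identity $g(h)b = b g(h-1)$ (an easy consequence of $[h,b] = -b$), that
\[
(\ad b)^i\bigl(g_m(h)\bigr) = b^i\,\Delta^i g_m(h) \in \calL_i,
\]
where $\Delta f(h) := f(h) - f(h-1)$ denotes the discrete derivative. Since $\Delta$ strictly lowers polynomial degree by one, for each fixed $i$ the sequence $\bigl(b^i\,\Delta^i g_m(h)\bigr)_{m \geq i}$ is a sequence of nonzero elements of $\calL_i$ whose polynomial degrees $\deg g_m - i$ tend to infinity with $m$. The dual computation, based on $g(h) a = a g(h+1)$, yields $(\ad a)^i\bigl(g_m(h)\bigr) = (-1)^i a^i\,\tau^i g_m(h) \in \calL_{-i}$, which handles $i \leq -1$ by the same degree argument.

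What remains is to verify linear independence of these iterated commutators inside $U$. The relation $2ab = -v(h)$ allows one to normal-order any word in $a,b,h$, and a standard PBW-type argument (see \cite{Sm}) gives the vector-space basis $\{b^i h^j : i,j \geq 0\} \cup \{a^i h^j : i \geq 1,\, j \geq 0\}$ of $U$. In this basis, for fixed $i \geq 1$ the element $b^i\,\Delta^i g_m(h)$ expands as $\sum_j c_{m,j}\, b^i h^j$ with leading index $j = \deg g_m - i$, so distinct values of $m$ give distinct leading monomials, hence linear independence; the analogous statement holds on the $a$-side. The main obstacle is not conceptual but bookkeeping: carrying out the induction identifying $(\ad b)^i(g_m)$ with $b^i \Delta^i g_m$, and checking that $\Delta^i g_m$ (respectively $\tau^i g_m$) remains nonzero and of growing degree for $m$ large, which is exactly where the assumption $n \geq 3$ is used.
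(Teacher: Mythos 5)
Your proof is correct and follows essentially the same strategy as the paper: both arguments rest on the sequence $(g_m(h))_m$ of Lemma~\ref{lem281}(3), whose degrees are strictly increasing precisely when $n\ge 3$, and transport it into $\calL_{\pm i}$ by iterated adjoint action of $a$ and $b$. The only real divergence is that the paper treats only the negative side directly (showing $a^i(\tau^i g_m)(h)\in\calL_{-i}$) and then obtains $\calL_i$ for $i>0$ by applying the anti-automorphism $\varkappa$ of $U$ ($\varkappa(a)=b$, $\varkappa(b)=a$, $\varkappa(h)=h$), whereas you compute $(\ad b)^i(g_m(h))=b^i\Delta^i g_m(h)$ directly; both are valid, and your slightly heavier linear-independence step via a PBW basis of $U$ can be replaced by the paper's simpler observation that $U$ is a domain, so left multiplication by $a^i$ (or $b^i$) preserves the linear independence of the polynomials $(\tau^i g_m)(h)$, which have pairwise distinct degrees.
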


\begin{proof}
  When $i=0$ the claim follows from Lemma~\ref{lem281}(1).
  Suppose that $i >0$. Let $(g_m(h))_m \subset \calL_0$ be
  as in Lemma~\ref{lem281}(3).  We will now show that
  $a^i(\tau^ig_m)(h) \in \calL_{-i}$ for all $m$. Since
  $\deg g_{m+1} > \deg g_m$ (because $n \ge 3$) the elements
  $a^i(\tau^ig_m)(h)$ are linearly independent in the domain
  $U$.  We argue by induction on $i$.  When $i=1$ the claim
  follows from $[a,g_m(h)] = a(\tau g_m)(h) \in \calL_{-1}$
  for all $m$. Assume that $a^i(\tau^ig_m)(h) \in
  \calL_{-i}$ for all $m$, then: $[a,a^i(\tau^ig_m)(h)] =
  a^{i+1}(\tau^{i+1} g_m)(h) \in \calL_{-(i+1)}$ for all
  $m$.
  
  Observe that there exists an anti-automorphism of
  $\widetilde{U}$ given by $\varkappa(A) = B$, $\varkappa(B)
  = A$, $\varkappa(H)= H$. It satisfies $\varkappa(\Omega) =
  \Omega$, therefore $\varkappa$ induces an
  anti-automorphism of ${U}$. Since
  $\varkappa(a^{i}(\tau^{i} g_m)(h))= (\tau^{i} g_m)(h)b^{i}
  \in \calL_i$, we get that $\dim \calL_i = \infty$.
\end{proof}

\begin{remark}
  The fact that $\dim_\C \calL = \infty$ when $n \ge 3$ can
  also be proved by using \cite{RST}.
\end{remark}

The next result shows that the spherical subalgebra $\Sph$
is isomorphic to a quotient $\widetilde{U}/(\Omega)$ for an
an obvious choice of $\psi(s)$:

\begin{prop}
  \label{prop29}
  Let $b^*(s), v(s) = -2b(-s), \psi(s) = \half (\tau v)(s)
  \in \C[s]$ be as in~\eqref{eq26} and~\eqref{eq27}, and
  denote by $U = \res(\Sph) = \C[z,\theta,\delta]$ the image
  of the spherical subalgebra under the restriction map. Let
  $\widetilde{U} = \C \langle A,B,H \rangle$ be the algebra
  similar to $U(\fsl(2))$ defined by $\psi$. Then the
  morphism
  $$
  \pi : \widetilde{U} \lto U, \quad \pi(A) = \delta, \
  \pi(B) =z, \ \pi(H) = h = -\theta
  $$
  induces an isomorphism $\pi : \widetilde{U}/(\Omega)
  \longisomto U$. In particular, there exists an isomorphism
  $\widetilde{U}/(\Omega) \isomto \Sph$.
\end{prop}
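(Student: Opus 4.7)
The strategy is threefold: verify $\pi$ preserves the defining relations of $\widetilde{U}$, show that $\Omega$ lies in the kernel, and then deduce that the induced surjection $\bar\pi:\widetilde{U}/(\Omega)\to U$ is injective by invoking the structural facts about $\widetilde{U}$ already collected in \S\ref{ssec22}.

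For well-definedness, after noting $\pi(H)=-\theta$, the three defining relations $[A,B]=\psi(H)$, $[H,A]=A$, $[H,B]=-B$ translate to $[\delta,z]=\psi(-\theta)$, $[-\theta,\delta]=\delta$, and $[-\theta,z]=-z$, all immediate from Proposition~\ref{prop28}(3)--(4). The sign in $\pi(H)=-\theta$ is precisely what makes the weight conventions match. For the second step, substituting gives $\pi(\Omega)=2z\delta+v(-\theta+1)$, which is exactly the expression shown to vanish in Proposition~\ref{prop28}(5). Hence $\pi$ descends to $\bar\pi:\widetilde{U}/(\Omega)\to U$, and surjectivity of $\bar\pi$ is immediate since $z$, $\theta=-\pi(H)$ and $\delta$ generate $U$ by Proposition~\ref{prop28}.

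The main content is the injectivity of $\bar\pi$. The key preliminary observation is that $v(1)=-2b(-1)=0$: indeed, $\lambda_0=k_0=0$, so the factor $(s+\lambda_0+1)$ of $b(s)$ vanishes at $s=-1$, making the entire product $b(-1)=c\prod_{i=0}^{n-1}\lambda_i$ vanish. Consequently, among the properties of $\widetilde{U}$ recalled in \S\ref{ssec22}, we have $\ann_{\widetilde{U}}M(0)=(\Omega-v(1))=(\Omega)$. If $\ker\bar\pi$ were nonzero, its preimage in $\widetilde{U}$ would be an ideal strictly containing $\ann M(0)$, and the listed property (``any ideal strictly containing $\ann M(\lambda)$ has finite codimension'') would force $U=\widetilde{U}/\text{preimage}$ to be finite dimensional. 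This contradicts the fact that $U\supseteq\C[z]$ is infinite dimensional.

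The ``in particular'' assertion then follows by composing $\bar\pi^{-1}$ with the restriction isomorphism $\res:\Sph\isomto U$ established in \eqref{eq02}--\eqref{eq03}. The main obstacle of the plan is the injectivity step; the proposed argument avoids an explicit PBW analysis of $\widetilde{U}/(\Omega)$ by leveraging the classification of its ideals, which is the cleanest path given what has already been recorded in the paper.
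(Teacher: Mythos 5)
Your proof is correct and follows essentially the same route as the paper: the defining relations and the vanishing of $\pi(\Omega)$ are read off from Proposition~\ref{prop28}(3)--(5), and injectivity of the induced map follows from the infinite-dimensionality of $U$ combined with the recorded fact that any ideal of $\widetilde{U}$ strictly containing a minimal primitive ideal $(\Omega - v(\lambda+1))$ has finite codimension. Your explicit verification that $v(1)=-2b(-1)=0$ (so that $(\Omega)=\ann M(0)$ is indeed one of these minimal primitive ideals) supplies a small detail the paper leaves implicit.
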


\begin{proof}
  The existence of the surjective homomorphism $\pi$ clearly
  follows from (3) and (4) in Proposition~\ref{prop28}. By
  Proposition~\ref{prop28}(5), $\Ker(\pi)$ contains
  $(\Omega)$.  Since $\pi(\widetilde{U})= U$ is not finite
  dimensional, it follows from one of the properties of
  $\widetilde{U}$ that $\Ker(\pi)= (\Omega)$.
\end{proof}

From results of Smith~\cite{Sm}, Mus\-son--Van den
Bergh~\cite{MVdB}, et al., one can for instance deduce the
following properties of the the algebra $U\cong \Sph$:
\\
-- The Verma modules over $U$ are the $M(\lambda)$'s such
that $v(\lambda + 1) = -2b^*(-\lambda) = 0$, i.e.~$\lambda =
0,\lambda_1,\dots,\lambda_{n-1}$, cf.~\eqref{eq26}.
\\
-- $\dim L(\lambda) < \infty$ $\iff$ $b^*(-\lambda) =
b^*(-\lambda + j) = 0$ for some $j \in \N^*$.
\\
-- The global homological dimension of $U$ is:
$$
\begin{cases}
  \infty  \  & \text{if $b^*(-s)$ has multiple roots;} \\
  2 \ & \text{if $b^*(-s)$ has no multiple root and two
    roots differing by some
    $j \in \N^*$;} \\
  1 \ & \text{otherwise.}
\end{cases}
$$

When a root $-\lambda$ of the polynomial $b^*(s)$ is a
rational number one can use Remark~\ref{rem22} to realize
the Verma module $M(\lambda)$ under the form
$\C[z]z^{-\lambda}$, on which $z,\theta,\delta$ act as
differential operators. This is for example the case for
$\lambda = 0$, where we have $M(0) = \C[z]$.

\begin{examples}
  \noindent (1) Take $b(s) = (s+1)(s+3/2)
  \cdots(s+(n+1)/2)$, $\lambda_i = i/2$, $0 \le i \le
  (n-1)/2$.  Verma modules: $M(\lambda_i) = \C[z] z^{-i/2}$
  with the natural action of $z,\delta,\theta$.  Irreducible
  finite dimensional modules: $L(\lambda_i) =
  M(\lambda_i)/M(\lambda_i-1)$, $i \ge 2$, of
  dimension~$1$.  The global dimension of $U$ is $2$.
  
  \noindent (2) Take $b(s) = (s+1)(s+5)(s+9)$, $\lambda_i =
  4i$, $i=0,1,2$.  Verma modules: $M(4i) = \C[z] z^{-4i}$,
  $M(0) \subset M(4) \subset M(8)$ with quotients $L(4i)
  = M(4i)/M(4(i-1))$ simple of dimension $4$.  The global
  dimension of $U$ is $2$.

  \noindent (3) Take $b(s) = (s+1)(s+n/2)$, $n \ge 3$,
  $\lambda_0 = 0$, $\lambda_1 = \frac{n-2}{2}$.  Verma
  modules: $M(0) = \C[z]$, $M(\frac{n-2}{2}) =
  \C[z]z^{-\frac{n-2}{2}}$.  There are two cases:
  \begin{itemize}
  \item $n=2k$, then $M(0) = \C[z] \subset M(k-1) =
    \C[z]z^{-(k-1)}$ with quotient $L(k-1)$ simple of
    dimension $k-1$; the global dimension of $U$ is $2$;
  \item $n=2k+1$, then $M(0) = \C[z]$ and $M(k-\half) =
    \C[z]z^{-(k-\half)}$ are simple; the global dimension of
    $U$ is $1$.
  \end{itemize}

  \noindent (4) Take
  \begin{equation*}
    \begin{split}
      b(s) = (s+1)^8 &
      \left[(s+2/3)(s+4/3)(s+3/4)(s+5/4)(s+5/6)(s+7/6)\right]^4 \\
      & \times
      \left[(s+7/10)(s+9/10)(s+11/10)(s+13/10)\right]^2,
    \end{split}
  \end{equation*}
  $\lambda_0= 0$, $\lambda_1= -1/3$, $\lambda_2= 1/3$,
  $\lambda_3= -1/4$, $\lambda_4= 1/4$, $\lambda_5=-1/6$,
  $\lambda_6 = 1/6$, $\lambda_7 = -3/10$, $\lambda_8 =
  -1/10$, $\lambda_9 = 1/10$, $\lambda_{10}=3/10$.  Verma
  modules: $M(\lambda_i) = \C[z]z^{-\lambda_i}$ which are
  simple $U$-modules.  The global dimension of $U$ is
  $\infty$.
\end{examples}

\section{Representations with a one dimensional quotient}
\label{sec3}

\subsection{Prehomogeneous vector spaces}
\label{ssec31}
Let $\tG$ be a connected reductive complex algebraic group.
We denote by $G = (\tG,\tG)$ its derived subgroup, which is
a connected semisimple group. Recall that $\tG = GC$ where
$C = Z(\tG)^0$, the connected component of the center of
$\tG$, is a a torus.

Let $\tilde{\rho} : \tG \to \GL(V)$ be a finite dimensional
representation of $\tG$. Recall that $f \in \CV=S(V^*)$ is a
relative invariant of $\tGV$ if there exists a rational
character $\chi \in \mathsf{X}(\tG)$ such that $g.f =
\chi(g)f$ for all $g \in \tG$.

One says, see~\cite[Chapter~2]{Kim}, that $\tGV =
(\tG,\tilde{\rho},V)$ is a (reductive) {\em prehomogeneous
  vector space} (PHV) if $\tilde{G}$ has a dense orbit in
$V$. We denote the complement of the dense orbit by $S$, it
is called the singular set of $\tGV$.  Then it is known
\cite[Theorem~2.9]{Kim} that the one-codimensional
irreducible components of $V \smallsetminus S$ are of the
form $\{f_i = 0\}$, $1 \le i \le r$, for some relative
invariants $f_i$. The $f_i$ are algebraically independent
and are called the basic relative invariants of $\tGV$; any
relative invariant $f$ can be (up to a non-zero constant)
written as $\prod_{i=1}^r f_i^{m_i}$. When the singular set
is a hypersurface $\tGV$ is called regular,
cf.~\cite[Theorem~2.28]{Kim}.

Let $\tilde{\rho}^* : \tG \to \GL(V^*)$ be the
contragredient representation. Then, see
\cite[Proposition~2.21]{Kim}, $\tGVd$ is a PHV.  Recall that
$S(V)=\C[V^*]$ can be identified with the algebra of
constant coefficients differential operators on $V$. If
$\vphi \in \C[V^*]$ we denote by $\vphi(\partial)$ the
corresponding differential operator. If $f \in \CV$ is a
relative invariant of degree $n$ and weight $\chi \in
\Xt(\tG)$, there exists a relative invariant $f^* \in
\C[V^*]$
of degree $n$ and weight $\chi^{-1}$. The following result
summarizes \cite[Proposition~2.22]{Kim} and \cite{Ka2}.

\begin{thm}[Sato-Bernstein-Kashiwara]
  \label{thm31}
  Under the above notation, set $\Delta= f^*(\partial) \in
  S(V)$.  There exists $b(s) \in \R[s]$ of degree $n$ such
  that:
  \begin{enumerate}
  \item[{\rm (1)}] $b(s) = c \prod_{i=0}^{n-1}(s+\lambda_i
    +1)$, $c > 0$;
  \item[{\rm (2)}] $\Delta(f^{s+1}) = b(s) f^s$;
  \item[{\rm (3)}] $\lambda_i + 1 \in \Q_+^*$, $0 \le i \le
    n-1$, $\lambda_0 = 0$.
  \end{enumerate}
\end{thm}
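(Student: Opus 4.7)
The plan is to prove the three assertions by first establishing the existence of a polynomial $b(s)$ with the functional equation $\Delta(f^{s+1}) = b(s) f^s$, then analyzing its roots and leading coefficient.

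First I would set up the invariance argument that forces the functional equation. Since $f^* \in \C[V^*]$ is a relative invariant of weight $\chi^{-1}$ under $\tG$ acting on $V^*$, the constant coefficient operator $\Delta = f^*(\partial) \in S(V)$ is a relative invariant in $\DV$ of weight $\chi^{-1}$ under the natural action $(g.D)(\vphi) = g.D(g^{-1}.\vphi)$. For any $m \in \N$ the polynomial $\Delta(f^m) \in \CV$ therefore transforms with weight $\chi^{-1}\cdot\chi^m = \chi^{m-1}$, and it has degree $(m-1)n$. Because $\CVG = \C[f]$ there is exactly one basic relative invariant (namely $f$), so the space of relative invariants of weight $\chi^{m-1}$ of degree $(m-1)n$ is one dimensional and spanned by $f^{m-1}$. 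Hence $\Delta(f^m) = \beta(m) f^{m-1}$ for a unique scalar $\beta(m)$. The standard argument (multiplying both sides by $f$ and expanding $\Delta(f \cdot f^m)$ by Leibniz, or using Bernstein's construction) shows that $m \mapsto \beta(m)$ extends to a polynomial of degree $\le n$ in its argument; setting $b(s) = \beta(s+1)$ gives item (2).

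Next I would pin down the degree and leading coefficient. Applying the chain rule to $\Delta(f^{s+1}) = f^*(\partial)(f^{s+1})$, the top-order-in-$s$ contribution comes from the term where all $n$ derivatives fall on distinct copies of $f$, producing $(s+1)s\cdots(s-n+2)$ times a polynomial of degree $0$ in $V$ equal to $f^*(df)\big/f^{n-1}$ (after cancellations forced by the invariance). Since this quantity must be a nonzero scalar (non-vanishing on the open orbit because $f^*$ is nondegenerate in the PHV duality), $\deg b = n$ and we can write $b(s) = c\prod_{i=0}^{n-1}(s + \lambda_i + 1)$ for constants $c \neq 0$ and $\lambda_i \in \C$. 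Positivity $c > 0$ is the delicate part: the cleanest route is to evaluate at a point $v_0$ in the open orbit with $f(v_0)$ real positive and compare the leading behavior as $s \to +\infty$ along real values; the dual PHV $\tGVd$ together with the identity $f^*(df_{v_0}) > 0$ (a consequence of the fact that on a generic orbit the map $v \mapsto df_v$ can be arranged so that $f^*(df)$ is a positive multiple of $f^{n-1}$, using that $\tG$ is reductive and the relative invariants are nonzero on the open orbit) delivers $c > 0$.

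For (3), substituting $s = -1$ gives $\Delta(1) = 0$, so $0 = b(-1) f^{-1}$ (rigorously: multiplying the functional equation by $f$ and evaluating at $s=-1$ yields $0 = b(-1)$); hence one of the factors vanishes at $s=-1$, which we take to be $\lambda_0 = 0$. To conclude that $\lambda_i + 1 \in \Q_+^*$ for all $i$, I would invoke Kashiwara's theorem that the roots of a $b$-function are negative rational numbers, which gives $\lambda_i + 1 \in \Q^*$; rationality gives $\Q$, negativity of $-(\lambda_i + 1)$ gives the sign. Strict positivity (no root equal to $0$, i.e.\ $\lambda_i + 1 \neq 0$) follows because a root $\lambda_i + 1 = 0$ would mean $\Delta(f) = 0$, contradicting the fact that $\Delta$ is a nonzero operator of order $n$ applied to a polynomial of degree $n$ (the pairing $f^*(df^{(n)})$ is nonzero).

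The main obstacle is the positivity statements: the sign of $c$ and the strict positivity $\lambda_i + 1 > 0$. These cannot be read off from the invariance/uniqueness argument alone and genuinely use the reductive structure of $\tG$ (via Kashiwara's rationality theorem \cite{Ka2} and the PHV duality argument of \cite{Kim}); the purely algebraic parts — existence, degree, and the root at $-1$ — are routine.
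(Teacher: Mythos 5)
The paper does not actually prove Theorem~\ref{thm31}: it is quoted verbatim from \cite[Proposition~2.22]{Kim} and \cite{Ka2}, so the comparison is with the standard proof rather than with an argument in the text. Your reconstruction of part~(2) is that standard argument and is essentially correct: $\Delta(f^{m})$ is a relative invariant of weight $\chi^{m-1}$, hence a scalar multiple $\beta(m)f^{m-1}$ (note that in \S\ref{ssec31} Hypothesis~\ref{hypA} is not yet in force, so the one-dimensionality should be drawn from the uniqueness of relative invariants attached to a fixed character \cite[Theorem~2.9]{Kim} rather than from $\CVG=\C[f]$), and polynomiality in $s$ of degree $\le n$ follows from the Leibniz expansion of $f^{-s}\Delta(f^{s+1})$. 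The deduction $\lambda_0=0$ from $b(-1)=\Delta(1)=0$ is also correct.

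The genuine gap is in your treatment of $c>0$ (and of $\deg b=n$). The quantity $f^*(df_v)$ \emph{equals} the leading coefficient of $b$ times $f(v)^{n-1}$ up to a positive combinatorial constant, so asserting that ``$f^*(df)$ is a positive multiple of $f^{n-1}$'' is asserting the conclusion; moreover the claim that $f^*$ is ``nondegenerate in the PHV duality'' amounts to the regularity condition, which is not assumed (Example~(6) of \S\ref{ssec32} is an irreducible non-regular case to which the theorem still applies). The correct mechanism, which the paper itself records in~\eqref{eq35}, is the positive definiteness of the pairing $\ascal{\phi(p)}{p}=\sum_{\mathtt{i}}\mathtt{i!}\,|a_{\mathtt{i}}|^2$ built from the $\tK$-invariant Hermitian form: since $\phi$ is multiplicative, $b(m-1)\cdots b(0)=\Delta^m(f^m)=\ascal{\phi(f^m)}{f^m}>0$ for every $m\ge 1$, whence $b(m)>0$ on all of $\N$ and the leading coefficient is positive. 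This is non-circular and is precisely why the specific Hermitian choice of $f^*$ matters. A smaller but real caveat for part~(3): Kashiwara's rationality theorem concerns the \emph{minimal} Bernstein--Sato polynomial $b_f(s)$, which only divides the polynomial $b(s)$ produced by $\Delta$; the identification of the two (up to a constant), so that \emph{all} roots of $b(s)$ are negative rationals, is part of what \cite{Ka2} and \cite[\S2.5]{Kim} are being cited for and cannot simply be read off from the general rationality theorem.
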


The polynomial $b(s)$ is called a \textit{$b$-function} of
$f$.  Since the form of the operator $\Delta=f^*(\partial)$
will be important in the proof of Theorem~\ref{thm38}, we
briefly indicate its expression in a particular coordinate
system (see \cite[p.~38]{Kim}).

Denote by $\overline{a}$ the complex conjugate of $a \in \C$
and set $|a|^2 = a \overline{a} \in \R_+$.  Let $\tK$ be a
maximal compact subgroup of $\tG$, so that
$\tG=\tK\exp(i\tilde{\fk})$ is the complexification of $\tK$
(where $\tilde{\fk} = \Lie(\tK)$).  Fix a $\tK$-invariant
non-degenerate Hermitian form $\kappa$ on $V$ such that
$\kappa(\lambda v, \mu w) =
\overline{\lambda}\mu\kappa(v,w)$, $\lambda, \mu \in \C$,
$v,w \in V$. We choose a $\kappa$-orthonormal basis
$\{e_i\}_{1 \le i \le N}$ on $V$, with dual basis $\{z_i =
e_i^*\}_i$. Define
\begin{equation}
  \label{eq31}
  \phi :  V \lto V^*, \quad \phi(v)= v^*= \kappa(v,-).
\end{equation}
In coordinates we have: $\phi\bigl(\sum_i v_i e_i\bigr)=
\sum_i\overline{v_i}z_i$. Then $\phi$ is a bijective
$\C$-antilinear map such that $\phi(h.v)= h.\phi(v)$ for all
$h \in \tK$. The inverse of $\phi$, also denoted by $\phi$,
is given by $\phi(v^*) = v$, i.e.~$\phi\bigl(\sum_i a_i
z_i\bigr)= \sum_i\overline{a_i}e_i$, and it also satisfies
$\phi(h.v^*) = h.\phi(v^*)$.  We can extend $\phi$ to
$S(V)=\C[V^*]$ and $S(V^*)=\CV$ by multilinearity. Thus we
get a bijective $\tK$-equivariant $\C$-antilinear morphism:
\begin{equation}
  \label{eq32}
  \phi : \CV \lto \C[V^*], \quad \phi(f) = f^*.
\end{equation}
Now, if $f$ is a relative invariant of $\tGV$ associated to
$\chi$, we obtain:
$$
h.f^*= h.\phi(f)= \phi(h.f) = \phi(\chi(h)f) =
\overline{\chi(h)} \phi(f) = \chi(h)^{-1}f^*
$$
for all $h \in \tK$, which shows that $f^*$ is a relative
invariant corresponding to $\chi^{-1}$.

The expression of $\phi(f)=f^*$ in the chosen basis is given
as follows. If $\mathtt{i}= (i_1,\dots,i_N) \in \N^N$ we set
$|\mathtt{i}| = \sum_{j=1}^N i_j$, $\mathtt{i!}=
\prod_{j=1}^N i_j$, $z^{\mathtt{i}}= z_1^{i_1}\cdots
z_N^{i_N}$, $e^{\mathtt{i}}= e_1^{i_1}\cdots e_N^{i_N} $ and
$\partial^{\mathtt{i}}= \partial_{z_1}^{i_1}\cdots
\partial_{z_N}^{i_N}$.  Then, if the polynomial $f$ is
written
\begin{equation}
  \label{eq33}
  f= \sum_{\{\mathtt{i} \in \N^N \, : \, |\mathtt{i}| = n\}}
  a_{\mathtt{i}} z^{\mathtt{i}}, \quad a_{\mathtt{i}} \in
  \C, 
\end{equation}
one has
\begin{equation}
  \label{eq34}
  f^* = \sum_{\{\mathtt{i} \in \N^N \, : \, |\mathtt{i}| =
    n\}} \overline{a_{\mathtt{i}}} 
  \, e^{\mathtt{i}}
\end{equation}
and therefore:
\begin{equation}
  \label{eq35}
  \Delta =\sum_{\{\mathtt{i} \in \N^N \, : \, |\mathtt{i}| =
    n\}} \overline{a_{\mathtt{i}}} 
  \, \partial^{\mathtt{i}}, \quad b(0) = \Delta(f) = c
  \prod_{i=0}^{n-1}(\lambda_i +1) = 
  \sum_{\mathtt{i}\in \N^N} \mathtt{i!}
  |a_{\mathtt{i}}|^2.
\end{equation}

\begin{rems}
  \label{rem32}
  1) If $K$ is a maximal compact subgroup of the semisimple
  group $G$ we can embed $K$ in a maximal compact subgroup
  $\tK$ of $\tG$.  Note that any relative invariant of
  $\tGV$ is a $G$-invariant, and that $f$ is $G$-invariant
  if and only if it is $K$-invariant.  The previous
  construction shows that there exist $\tK$-equivariant
  bijective $\C$-antilinear morphisms $\phi : S(V) \to
  S(V^*)$ and $\phi : S(V^*) \to S(V)$ such that $\phi \circ
  \phi = \id$. In particular we have
  $\phi\bigl(S(V^*)^K\bigr) = S(V)^K$, hence
  $\phi\bigl(S(V^*)^G\bigr) = S(V)^G$.
  
  2) Observe that $\phi : V \to V^*$ is $K$-equivariant,
  but, in general, there is no $G$-module isomorphism of
  between $V$ and $V^*$. For example, in the case $\tGV =
  \bigl(\GL(n,\C\bigr), \Wedge^2\C^n)$ one has
  ($\Wedge^2\C^n)^* \cong \Wedge^{n-2}\C^n$ as a $G$-module,
  which is not isomorphic to $\Wedge^2\C^n$ when $n > 2$.
\end{rems}

\subsection{PHV of rank one}
\label{ssec32}
Let $\tG = GC$ be as above and $\tGV$ be a finite
dimensional representation of $\tG$. In this subsection we
make the following hypothesis:

\begin{mainhyp}
  \label{hypA}
  There exists $f \in S^n(V^*)$ such that: 
  $f \notin \CV^{\tG}$ and $\CV^G= \C[f]$.
\end{mainhyp}

\begin{remarks}
  (1) Assume that $G$ is a semisimple group and $\GV$ is a
  finite dimensional representation of $G$ such that $\CV^G=
  \C[f]$, $f \notin \C^*$. Let the group $\C^*$ act on $V$
  by homotheties. Then $(G\times \C^* : V)$ satisfies the
  hypothesis~\ref{hypA}. Therefore one could assume without
  lost of generality that $C=\C^*$.
  
  (2) Let $f$ be as in the previous hypothesis. Then, since
  $G$ is semisimple, the polynomial $f$ is irreducible. Let
  $g \in C$, then $g.f \in \CV^G= \C[f]$ and $\deg(g.f) =
  \deg(f)$, hence $g.f = \chi(g)f$ for some $\chi(g) \in
  \C^*$. It follows that $\chi \in \mathsf{X}(\tG)$,
  i.e.~$f$ is a relative invariant of $\tGV$; note that
  $\chi \ne 1$, since $f \notin \CV^{\tG}$.  Assume that
  $f_1$ is another relative invariant of $\tGV$.  Then $f_1
  \in \CV^G$ is homogeneous and this implies that $f_1=
  \alpha f^m$ for some $\alpha \in \C, m \in N$.
\end{remarks}

\begin{prop}
  \label{prop32}
  Let $f$ be as in hypothesis~\ref{hypA}.

  \noindent {\rm (i)} Let $\C(V)$ be the fraction field of
  $\CV$. Then one has:
  $$
  \C(V)^G= \C(f), \qquad \C(V)^\tG= \C.
  $$
  In particular, $\tGV$ is a PHV.

  \noindent {\rm (ii)} Let $f^*$ be the relative invariant
  obtained in~\eqref{eq33}, then $\C[V^*]^G = \C[f^*]$.

  \noindent {\rm (iii)} The representation $\GV$ is polar.
\end{prop}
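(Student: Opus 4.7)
The plan is to treat the three parts by standard invariant-theoretic arguments, exploiting that $G$ is semisimple (hence has no rational characters) together with the Hermitian duality $\phi$ of~\eqref{eq32}.

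For part~(i), I would first show $\C(V)^G = \C(f)$. Given $h/g \in \C(V)^G$ with $h,g \in \CV$ coprime, the $G$-invariance $\sigma(h)/\sigma(g) = h/g$ gives $g \mid \sigma(g)$; as $G$ acts linearly on $V$, hence by degree-preserving automorphisms of $\CV$, one obtains $\sigma(g) = \lambda(\sigma) g$ for a rational character $\lambda$ of $G$. Because $G$ is semisimple, $\lambda$ is trivial, so $g,h \in \CV^G = \C[f]$, yielding $\C(V)^G = \Frac(\C[f]) = \C(f)$. Next, since $\tG = GC$, one has $\C(V)^{\tG} = \C(f)^C$, and $C$ acts on $f$ through the character $\chi|_C$, which is nontrivial because $f \notin \CV^{\tG}$; as $C$ is a torus acting on $\C[f] \cong \C[t]$ with a nontrivial weight, the invariant field is $\C$. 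By Rosenlicht's theorem, the equality $\C(V)^{\tG} = \C$ forces a generic $\tG$-orbit to have full dimension $\dim V$, i.e.~$\tGV$ is a PHV.

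For part~(ii), the map $\phi : \CV \to \C[V^*]$ of~\eqref{eq32} is a $\tK$-equivariant antilinear ring isomorphism: it sends the basis $\{z_i\}$ to $\{e_i\}$ and extends multiplicatively, with scalar conjugation. The unitarian trick applied to a maximal compact subgroup $K$ of $G$ embedded in $\tK$ gives $\CV^G = \CV^K$ and $\C[V^*]^G = \C[V^*]^K$, so $\phi$ restricts to a bijection $\CV^G \isomto \C[V^*]^G$. Multiplicativity then yields $\C[V^*]^G = \phi(\C[f]) = \C[\phi(f)] = \C[f^*]$.

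For part~(iii), I would appeal to the Dadok--Kac framework recalled in the introduction: representations with a one-dimensional categorical quotient are polar. Concretely, since $\tGV$ is a PHV by~(i) and $\CV^G = \C[f]$ has transcendence degree one, a generic fibre of $f : V \to \C$ is irreducible of codimension one and meets the open $\tG$-orbit in a single closed $G$-orbit. Choosing a generic $v_0$ in such a fibre and setting $\h = \C v_0$, the rescaling $v_0 \mapsto \lambda v_0$ shows that $G.\h$ meets every nonzero fibre of $f$, so $G.\h$ is dense in $V$. The normaliser $N_G(\h)$ maps to $\GL(\h) = \C^*$ by $g.v_0 = \mu(g)v_0$, and $G$-invariance of $f$ forces $\mu(g)^n = 1$, so $W = N_G(\h)/Z_G(\h)$ embeds in the group of $n$-th roots of unity. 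Combined with $f|_\h = f(v_0)\,t^n$ and the Chevalley-type restriction identifying $\CV^G$ with $\C[\h]^W$, one deduces $W \simeq \Z/n\Z$, exhibiting $\h$ as a Cartan subspace. The principal obstacle lies precisely in this last step: verifying in the one-dimensional polar setting that the restriction $\CV^G \to \C[\h]^W$ is an isomorphism, which is exactly the content of the Dadok--Kac theory invoked here.
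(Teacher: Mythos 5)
Parts (i) and (ii) are correct and follow essentially the paper's route: your argument for $\C(V)^G=\C(f)$ is just the standard proof of the result the paper cites from Popov--Vinberg (coprime numerator and denominator are semi-invariants, and a connected semisimple group has no nontrivial characters), your computation of $\C(f)^C$ matches the paper's, and part (ii) is verbatim the paper's use of the unitarian trick together with the multiplicativity of $\phi$.

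Part (iii) contains a genuine gap, and you have put your finger on it yourself: your ``concrete'' argument ends by invoking the Chevalley-type restriction isomorphism $\CVG\isomto\C[\h]^W$ to conclude that $\h$ is a Cartan subspace. But that restriction isomorphism is a \emph{theorem of Dadok--Kac about polar representations}; it presupposes polarity rather than establishing it, so the argument is circular. Moreover, the intermediate steps (density of $G.\h$, the embedding $W\hookrightarrow\mu_n$) do not address the actual definition of polarity, which concerns the space $\fc_v=\{x\in V:\g.x\subset\g.v\}$ for $v$ with closed $G$-orbit of maximal dimension among closed orbits. The non-circular argument, which is the one the paper uses, is short: since $\CVG=\C[f]\ne\C$, there is a nonzero regular semisimple $v$ (not merely a ``generic $v_0$ in a generic fibre'', whose orbit need not be closed a priori); Dadok--Kac give the inequality $\dim\fc_v\le\dim V\qmod G=1$, while $v\in\fc_v$ and $v\ne 0$ give $\dim\fc_v\ge 1$; hence $\fc_v=\C v$ is a Cartan subspace and $\GV$ is polar. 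So the missing ingredient is not the Chevalley restriction theorem but the Dadok--Kac dimension bound on $\fc_v$, applied to a vector with closed orbit.
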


\begin{proof}
  (i) The equality $\C(V)^G= \C(f)$ follows from
  \cite[Theorem~3.3]{PV}. By the remark~(2) above, we know
  that $g.f= \chi(g)f$ for all $g \in \C$. Observe that
  $\C(V)^\tG = (\C(V)^G)^C=\C(f)^C$ and let $\vphi
  =p(f)/q(f) \in \C(f)^C$ where $p(f), q(f) \in \C[f]$ are
  relatively prime polynomials in $f$. One easily sees that
  $p(f)$ and $q(f)$ are relative invariants, thus $p(f) =
  \alpha f^k$ and $q(f) = \beta f^{\ell}$, $\alpha, \beta
  \in \C$. It follows that $\chi^{k-\ell} =1$, hence
  $k=\ell$ and $\vphi \in \C$.  From \cite[Corollary,
  p.~156]{PV} one gets that $\tGV$ is a PHV.

  \noindent (ii) Adopt the notation of
  Remarks~\ref{rem32}. The map $\phi : \CV \to \C[V^*]$
  defined in~\eqref{eq32} yields a bijective $\C$-antilinear
  morphism $\C[V]^G= \C[V]^K \to \C[V^*]^K = \C[V^*]^G$.
  Thus $\C[V^*]^G = \C[f^*]$.

  \noindent (iii) Choose $v \in V$ regular semisimple,
  i.e.~$G.v$ closed and $\dim G.v \ge \dim G.v'$ for all
  closed orbits $G.v'$. Set $\h = \C v$ and $\g= \Lie(G)$.
  Then, see~\cite{DK}, one easily deduces that $\h= \h_v =
  \{ x \in V : \g.x \subset \g.v\}$ is a Cartan subspace for
  the $G$-action on $V$.
\end{proof}

{\em From now on, we assume that the hypothesis~\ref{hypA}
  is satisfied} and we fix a Cartan subspace $\h = \C v$ for
the $G$-action on $V$.  We set
\begin{itemize}
\item $x= v^*$, hence $\C[\h] = S(\h^*) = \C[x]$;
\item $W = N_G(\h)/Z_G(\h)$.
\end{itemize}
By~\cite{DK} we know that there exists an isomorphism $V
\qmod G \cong \h/W$ given by the restriction map $\psi: \CVG
\longisomto \ChW$, $\psi(p(f)) = p(f)_{\mid \h}$. Since $f$
is homogeneous of degree $n$, $\psi(f) \in \C[x]$ is a
scalar multiple of $z=x^n$. Therefore, multiplying $v$ by a
non zero constant, we may assume that
$$
\psi(f) = x^n, \qquad W \equiv \langle w \rangle \subset
\GL(\h)
$$
where $w$ acts on $x$ by $w.x = \zeta x$, $\zeta$ primitive
$n$-th root of unity. We can therefore adopt the notation of
\S\ref{ssec21}. In particular, $\ChW= \C[z]$ where $z = x^n
= \psi(f)$.

Let $b(s) = c \prod_{i=0}^{n-1}(s+\lambda_i +1) \in \C[s]$
be a $b$-function of the relative invariant $f$ as in
Theorem~\ref{thm31}. We can then define the rational
Cherednik algebra
$$
\cH = \cH(W,k_0,\dots,k_{n-1})= \C\langle x,T,w\rangle,
$$
where the parameters $k_i$ are given by $k_i = \lambda_i +
\frac{i}{n}$, cf.~\eqref{eq26}.  Recall that the image $U=
\res(\Sph)\subset \C[z,\dz]$ of the spherical subalgebra is
generated by $z,\delta,\theta$, where
$$
\theta = z \dz, \qquad \delta = z^{-1}b^*(\theta) =
z^{-1}c\prod_{j=0}^{n-1}(\theta + \lambda_j),
$$
see Proposition~\ref{prop28}.

Recall from \S\ref{sec1} that we have a radial component
map:
$$
\rad : \DVG \lto \DhmodW = \C[z,\dz], \quad \rad(D)(p(z)) =
\psi(D(p(f))),
$$
for all $p(z) \in \ChW$. The {\em algebra of radial
  components} is defined to be
\begin{equation}
  \label{radial}
  R= \rad(\DVG) \subset \C[z,\dz].
\end{equation}
The aim of this section is to prove that $R = U$, see
Theorem~\ref{thm38}.

Before entering the proof, let us give some standard
examples. A complete list of pairs $\tGV$ as above with $V$
irreducible can be found in \cite{Kim}. Recall that $\Delta
= f^*(\partial) \in S(V)$ is the differential operator
constructed in~\eqref{eq35}.

\begin{examples}
  (1) $(\tG = \GL(n) : V = S^2\C^n)$, $W = \Z_n$, $f =
  \det(x_{ij})$, $\Delta= \det(\partial_{x_{ij}})$, $b(s) =
  \prod_{i=0}^{n-1}(s + i/2 + 1)$.

  \noindent (2) $(\tG = \mathrm{E}_6 \times \C^* : V =
  \C^{27})$, $W = \Z_3$, $f=$ cubic form, $b(s) =
  (s+1)(s+5)(s+9)$.
    
  \noindent (3) $(\tG = \SO(n) \times \C^* : V =\C^n)$, $W =
  \Z_2$, $f$ = quadratic form, $\Delta$ = Laplacian, $b(s) =
  (s+1)(s+n/2)$.
  
  \noindent (4) $(\tG = \SL(5) \times \GL(4) : V =
  \Wedge^2\C^5 \otimes\C^4)$, $W= \Z_{40}$, $\deg f = 40$,
  $b(s)$ is:
  \begin{align*}
    (s+1)^8 &
    \bigl[(s+2/3)(s+4/3)(s+3/4)(s+5/4)(s+5/6)(s+7/6)\bigr]^4
    \\
    & \bigl[(s+7/10)(s+9/10)(s+11/10)(s+13/10)\bigr]^2.
  \end{align*}

  \noindent (5) $(\tG = \GL(n) \times \SL(n) : V =
  \Mat_n(\C))$, $W = \Z_n$, $f = \det(x_{ij})$, $\Delta=
  \det(\partial_{x_{ij}})$, $b(s) = \prod_{i=0}^{n-1}(s+i
  +1)$.

  \noindent (6) $(\tG = \Sp(n) \times \SO(3) \times \C^* : V
  = \Mat_{2n,3}(\C))$, $W = \Z_4$, $\deg f =4$, $b(s) =
  (s+1)(s+3/2)(s+n)(s+n+1/2)$.

  \noindent Remark: The first five examples are regular
  irreducible PHV, but (6) gives is an example of an
  irreducible PHV which is not regular~\cite{Kim}. The
  description of the Verma modules on $U$ associated to
  examples~(1) to (4) are given in \S\ref{ssec22}.
\end{examples}

Let $\Theta$ be the Euler vector field on $V$; thus
$\Theta(p) = np$ for all $p \in S^n(V^*)$. In particular
$\Theta(f) = nf$, which implies that $\rad(\Theta) =
n\theta$. Set
\begin{equation}
  \label{theta}
  \bTheta = \frac{1}{n}\Theta,
\end{equation}
so that $\rad(\bTheta) = \theta$.

\begin{lem}
  \label{lem33}
  One has:
  $$
  U \subset R, \qquad U[z^{-1}] = R[z^{-1}] = \C[z^{\pm
    1},\dz].
  $$
\end{lem}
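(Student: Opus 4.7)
The plan is to exhibit explicit preimages under $\rad$ for the three generators $z,\theta,\delta$ of $U$, and then observe that localizing at $z$ forces equality with the full Weyl algebra $\C[z^{\pm 1},\dz]$.

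First I would check $U\subset R$ by treating the generators separately. For $z$: since $f\in\CVG\subset\DVG$ acts on $\ChW$ via multiplication, $\rad(f)(p(z))=\psi(f\cdot p(f))=z\,p(z)$, so $\rad(f)=z$. For $\theta$: the Euler field $\Theta$ is $\tG$-invariant (in particular $G$-invariant), and $\Theta(f^s)=sn f^s$ gives $\rad(\bTheta)=\theta$ with $\bTheta=\Theta/n$ as in~\eqref{theta}. For $\delta$: by Proposition~\ref{prop32}(ii) and Remark~\ref{rem32}(1) we have $S(V)^G=\C[\Delta]$, so $\Delta\in\DVG$; then Theorem~\ref{thm31}(2) gives $\Delta(f^s)=b(s-1)f^{s-1}=b^*(s)f^{s-1}$, so $\rad(\Delta)(z^s)=b^*(s)z^{s-1}$, which by Proposition~\ref{prop28}(1) equals $\delta(z^s)$. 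Hence $\rad(\Delta)=\delta$ and $U=\C[z,\theta,\delta]\subset R$.

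For the second assertion I would chain three easy inclusions. On the one hand $R\subset\DhmodW=\C[z,\dz]$ by definition, so $R[z^{-1}]\subset\C[z^{\pm 1},\dz]$. On the other hand, since $z\in U$ and $\theta=z\dz\in U$, we get $\dz=z^{-1}\theta\in U[z^{-1}]$, whence $\C[z^{\pm 1},\dz]\subset U[z^{-1}]$. Combining with $U\subset R$ yields
\[
\C[z^{\pm 1},\dz]\;\subset\; U[z^{-1}]\;\subset\; R[z^{-1}]\;\subset\;\C[z^{\pm 1},\dz],
\]
forcing equality throughout.

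There is no real obstacle: the whole argument rests on the identification $\rad(\Delta)=\delta$, which is a direct consequence of the Bernstein--Sato functional equation of Theorem~\ref{thm31} together with the explicit formula for $\delta$ from Proposition~\ref{prop28}(1). The subsequent inversion of $z$ is purely formal. (The deeper statement $R=U$, which does not use localization, will require substantially more work in Theorem~\ref{thm38}.)
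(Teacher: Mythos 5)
Your proposal is correct and follows essentially the same route as the paper: compute $\rad(f)=z$, $\rad(\bTheta)=\theta$, and $\rad(\Delta)=\delta$ via the functional equation $\Delta(f^{s+1})=b(s)f^s$ together with Proposition~\ref{prop28}(1), then invert $z$ and use $\dz=z^{-1}\theta$ to squeeze $U[z^{-1}]\subset R[z^{-1}]\subset\C[z^{\pm1},\dz]\subset U[z^{-1}]$. Nothing is missing.
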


\begin{proof}
  Let $\Delta = f^*(\partial) \in S(V)$ be as in
  in~\eqref{eq35}. By definition and Theorem~\ref{thm31} we
  obtain: $\rad(\Delta)(z^{s+1}) = \psi(\Delta(f^{s+1})) =
  b(s)z^{s}$. Hence $\rad(\Delta) = \delta$, see
  Proposition~\ref{prop28}. From $\rad(f) = z$ and
  $\rad(\bTheta) = \theta$ it follows that $U =
  \C[z,\delta,\theta] \subset R \subset \C[z,\dz]$.  Observe
  that $U[z^{-1}] = \C[z^{\pm 1}, \delta, z^{-1}\theta =
  \dz,] = \C[z^{\pm 1},\dz] \subset R[z^{-1}] \subset
  \C[z^{\pm 1},\dz]$, thus $U[z^{-1}] = R[z^{-1}] =
  \C[z^{\pm 1},\dz]$.
\end{proof}
 
Recall that $\phi : \CV \to \C[V^*]$, $\phi(p) = p^*$, is
the bijective $\tK$-equivariant $\C$-antilinear morphism
defined in~\eqref{eq32}.

\begin{lem}
  \label{lem34}
  The map $\phi$ extends to a $\tK$-equivariant
  $\C$-antilinear anti-auto\-mor\-phism of $\DV$, given by
  $\phi(p) = \partial(p^*)$. One has:
  $$
  \phi\bigl(\DVG\bigr) = \DVG, \quad \phi^2= \id, \quad
  \phi(f) = \Delta = \partial(f^*), \quad \phi(\bTheta) =
  \bTheta.
  $$
\end{lem}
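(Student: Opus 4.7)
The plan is to extend $\phi$ generator by generator. Recall that $\DV$ is generated by $V^{*}\subset\CV$ and $V\subset\SV$ with the Weyl algebra relations $[v,w]=[\xi,\eta]=0$ and $[v,\xi]=\dual{\xi}{v}$ for $v,w\in V$, $\xi,\eta\in V^{*}$. I would define $\phi$ on generators by the two $\C$-antilinear maps already constructed in~\eqref{eq31}, namely $\phi\colon V^{*}\to V$ and $\phi\colon V\to V^{*}$, and extend it to the tensor algebra $T(V\oplus V^{*})$ as a $\C$-antilinear anti-homomorphism (so $\phi(a_{1}\cdots a_{r})=\phi(a_{r})\cdots\phi(a_{1})$ with complex scalars conjugated). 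Concretely, in the orthonormal coordinates fixed before the lemma, this reads $\phi(z_{i})=\partial_{z_{i}}$ and $\phi(\partial_{z_{i}})=z_{i}$.

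The first real step is to show that $\phi$ descends to an anti-automorphism of $\DV$. The only non-trivial relation to check is $[\partial(v),\xi]=\dual{\xi}{v}$: applying an $\C$-antilinear anti-homomorphism one needs $\overline{\dual{\xi}{v}}=[\phi(\xi),\phi(v)]=\dual{\phi(v)}{\phi(\xi)}$, and writing $\phi(v)=\kappa(v,-)$ and $\phi(\xi)=w$ with $\xi=\kappa(w,-)$, this reduces to the Hermitian identity $\kappa(v,w)=\overline{\kappa(w,v)}$. In coordinates the same check is the one-line verification that $[\phi(z_{j}),\phi(\partial_{z_{i}})]=[\partial_{z_{j}},z_{i}]=\delta_{ij}=\overline{\delta_{ij}}$. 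This is the step that could superficially look like the main obstacle, but the Hermitian nature of $\kappa$ makes it essentially automatic.

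The remaining properties fall out quickly. For $\phi^{2}=\id$: the square is a $\C$-linear algebra \emph{automorphism} which fixes generators ($\phi^{2}(z_{i})=\phi(\partial_{z_{i}})=z_{i}$, and symmetrically for $\partial_{z_{i}}$), hence equals the identity. The identity $\phi(f)=\Delta$ is immediate from the formulas~\eqref{eq33}--\eqref{eq35}: indeed, $\phi$ on $\CV$ was already shown to coincide with $p\mapsto p^{*}$, and the further inclusion $\SV\subset\DV$ turns $p^{*}$ into $\partial(p^{*})$. For $\phi(\bTheta)=\bTheta$, compute $\phi(\Theta)=\phi(\sum_{i}z_{i}\partial_{z_{i}})=\sum_{i}\phi(\partial_{z_{i}})\phi(z_{i})=\sum_{i}z_{i}\partial_{z_{i}}=\Theta$, where anti-multiplicativity restores the original order.

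Finally, for $\tK$-equivariance, I would use that $\tK$ acts on $\DV$ by algebra automorphisms (conjugation), and that $\phi$ is already $\tK$-equivariant on $V$ and $V^{*}$ because $\{e_{i}\}$ is $\kappa$-orthonormal so $\tK$ acts unitarily in this basis. Combining the two, a direct check on generators gives $\phi(k.a)=k.\phi(a)$ for $a\in V\oplus V^{*}$ and $k\in\tK$; anti-multiplicativity then propagates the equivariance to the whole algebra. To conclude $\phi(\DVG)=\DVG$, I would invoke Remark~\ref{rem32}(1): choosing a maximal compact $K\subset G$ embedded in $\tK$, one has $\DVG=\DV^{K}$ by algebraic integrality, so any $D\in\DVG$ satisfies $k.\phi(D)=\phi(k.D)=\phi(D)$ for every $k\in K$. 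The equality (rather than inclusion) follows from $\phi^{2}=\id$.
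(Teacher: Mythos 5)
Your proposal is correct and follows essentially the same route as the paper: define $\phi$ on the generators $z_i\mapsto\partial_{z_i}$, $\partial_{z_i}\mapsto z_i$ in the $\kappa$-orthonormal coordinates, verify the Weyl relations and $\tK$-equivariance on generators, and deduce $\phi(\DVG)=\DVG$ from $\DV^K=\DV^G$ with $K\subset\tK$ maximal compact in $G$. You simply spell out the details (the Hermitian-symmetry check and the equivariance propagation) that the paper declares "clear" or "by construction".
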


\begin{proof}
  In the coordinate system $\{z_i,\partial_{z_i}\}_{1 \le i
    \le N}$ we have: $\phi(z_i)= \partial_{z_i}$,
  $\phi(\partial_{z_i}) = z_i$, $\phi(a) = \overline{a}$ for
  $a \in \C$. Since $\DV =\C[z_i, \partial_{z_j} : 1 \le i,j
  \le N]$ with relations $[\partial_{z_j},z_i] =
  \delta_{ij}$, it is clear that $\phi$ extends to a
  $\C$-antilinear anti-automorphism of $\DV$ such that
  $\phi^2= \id$. By construction $\phi$ is
  $\tK$-equivariant, in particular $K$-equivariant if $K
  \subset \tK$ is a maximal compact subgroup of $G$. From
  $G=K_\C= K \exp(i\k)$ it follows that $\DV^K = \DV^G$,
  hence $\phi\bigl(\DVG\bigr) = \DVG$. The equality $\phi(f)
  = \Delta$ is obvious and $\phi(\bTheta) = \bTheta$ is
  consequence of $\Theta= \sum_i z_i \partial_{z_i}$.
\end{proof}

Recall that $\rad : \DVG \sto R $; we now want to check that
the anti-automorphism $\phi$ induces an anti-automorphism on
$R$ such that $\phi(z) = \delta$.  Denote by $J$ the kernel
of $\rad$, thus:
$$
J = \bigl\{D \in \DVG : D(f^m) = 0 \ \text{for all $m \in
  \N$}\bigr\}.
$$
Set
\begin{equation}
  \label{eq36}
  \D = \DVG \supset \tD = \mathcal{D}(V)^{\tG}.
\end{equation}
Since $\Theta \in \DVG$ we can decompose $\D$ under the
adjoint action of $\Theta$:
\begin{equation}
  \label{eq37}
  \D = \bigoplus_{p \in \Z} \, \D[p], \quad \D[p] = \{D \in \D :
  [\Theta,D] = pD\}.
\end{equation}

\begin{prop}
  \label{prop35}
  One has $\phi(J)=J$.
\end{prop}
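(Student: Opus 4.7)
The plan is to build a positive-definite Hermitian pairing on $\CV$ under which $\phi$ realizes the formal adjoint, and then use that $\CVG$ is the trivial $K$-isotypic component to transfer the vanishing condition defining $J$ from $D$ to $\phi(D)$.

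First, I would introduce the \emph{Fischer pairing} $\langle p,q \rangle := \bigl(\phi(p) \cdot q\bigr)(0)$ for $p,q \in \CV$, where $\phi(p) \in S(V)$ is interpreted as a constant coefficient differential operator. In the coordinates of~\eqref{eq33}, this reads $\langle \sum a_{\tti} z^{\tti},\, \sum b_{\ttj} z^{\ttj}\rangle = \sum_{\tti} \overline{a_{\tti}}\, b_{\tti}\, \tti!$, which is non-degenerate. Because $\kappa$ is $\tK$-invariant and the basis $\{e_i\}$ is $\kappa$-orthonormal, this pairing is $\tK$-invariant; in particular, distinct $K$-isotypic components of $\CV$ (for $K \subset G$ maximal compact) are mutually orthogonal. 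Since $\CVG = \CV^K$ is the trivial $K$-isotype, its orthogonal complement is precisely the kernel of the Reynolds projector $\pi : \CV \sto \CVG$. Next, a direct check on the generators $z_i,\, \partial_{z_i}$ (using $\phi(z_i) = \partial_{z_i}$ and $\phi(\partial_{z_i}) = z_i$) yields the adjunction formula
\[
\langle D \cdot p,\, q \rangle \;=\; \langle p,\, \phi(D)\cdot q \rangle, \qquad D \in \DV,\; p,q \in \CV,
\]
which then propagates to all of $\DV$ because $\phi$ is a $\C$-antilinear anti-automorphism.

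With these tools the proof is short. By Lemma~\ref{lem34}, $\phi^2 = \id$, so it is enough to show $\phi(J) \subseteq J$. Fix $D \in J$, $m \in \N$, and $p \in \CV$. Using the adjunction formula, then the orthogonality of $f^m \in \CVG$ to $\ker \pi$, and finally the fact that $D \in \DVG$ commutes with $\pi$ (since $\pi$ is $K$-averaging and $D$ is $K$-invariant), one obtains
\[
\langle p,\, \phi(D)(f^m)\rangle \;=\; \langle D\cdot p,\, f^m \rangle \;=\; \langle \pi(D \cdot p),\, f^m \rangle \;=\; \langle D \cdot \pi(p),\, f^m\rangle.
\]
But $\pi(p) \in \C[f]$ is a polynomial in $f$, and $D$ annihilates every $f^k$ by hypothesis, so $D \cdot \pi(p) = 0$. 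Non-degeneracy of the pairing then forces $\phi(D)(f^m) = 0$ for every $m$, i.e.~$\phi(D) \in J$.

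The only delicate point is the construction of the Fischer pairing and the verification of its two crucial properties — that $\phi$ is its formal adjoint and that the Reynolds projector is Hermitian orthogonal projection onto $\CVG$; once these are in place, the conclusion is purely formal and does not rely on any further structure of the PHV beyond hypothesis~\ref{hypA}.
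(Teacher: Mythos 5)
Your proof is correct, and it takes a genuinely different route from the paper's. The paper argues internally in $\DVG$: it decomposes $J$ into $\ad\Theta$-weight spaces, writes an element $D$ of a weight space as a sum graded by polynomial degree, first establishes $\phi(D)(1)=0$ using the non-vanishing $\Delta^\ell(f^\ell) = b(\ell-1)\cdots b(0) \ne 0$, and then runs an induction on $\deg_z D$ via the identity $\phi([D,\Delta]) = [f,\phi(D)]$, which propagates $\phi(D)(f^{m+1}) = f\,\phi(D)(f^m)$. Your argument instead exploits the Fischer pairing $\langle p,q\rangle = \phi(p)(q)(0)$, for which $\phi$ is the formal adjoint and which is $\tK$-invariant and positive definite degree by degree; combined with the facts that $\CVG=\CV^K$ is the trivial $K$-isotype and that a $K$-invariant operator commutes with the Reynolds projector, the chain $\langle p,\phi(D)(f^m)\rangle = \langle D(p),f^m\rangle = \langle D(\pi(p)),f^m\rangle = 0$ closes the proof at once. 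All the ingredients you invoke (the adjunction on the generators $z_i,\partial_{z_i}$, orthogonality of distinct isotypes, $\pi(p)\in\C[f]$) check out. What your approach buys is brevity and the elimination of two inputs the paper needs: the induction on $z$-degree and the arithmetic fact $b(j)\ne 0$ for $j\in\N$; what it costs is the introduction of the auxiliary Hermitian structure, though that structure is already implicit in the paper's construction of $\phi$ from the $\tK$-invariant form $\kappa$. Your closing remark is also accurate: the argument uses nothing beyond Hypothesis~\ref{hypA} and the unitary setup of \S\ref{ssec31}.
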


\begin{proof}
  As $\phi^2 = \id$ we need to show that $\phi(J) \subset
  J$. Since $J$ is an ideal of $\DVG$ it decomposes under
  the adjoint action of $\Theta$: $J = \boplus_{p \in \Z}
  J[p]$, $J[p] = J \cap \D[p]$. Thus we only need to check
  that $\phi(J[p]) \subset J$.  In the previous coordinate
  system $\{z_i,\partial_{z_i}\}_{1 \le i \le N}$ we have:
  $$\D[p] = \sum_{|\mathtt{i}| - |\ttj| = p} \C z^\tti
  \partial^\ttj.
  $$
  We can write $D \in \D[p]$ in a unique way under the form
  $$
  D= D_0 + \cdots+D_t, \quad D_k = \sum_{|\ttj| = k
    -p}\biggl(\sum_{|\tti| = k}a_{\tti,\ttj} z^\tti\biggl)
  \partial^\ttj,
  $$
  (thus $D_k = 0$ when $k < p$). If $D_t \ne 0$, we set $t=
  \deg_z D$.
  
  Let $D =\sum_{|\mathtt{i}| - |\ttj| = p} a_{\tti,\ttj}
  z^\tti \partial^\ttj$ be in $J[p] \smallsetminus\{0\}$.
  As $G$ acts linearly on $V$ we have $G.D_k \subset
  S^k(V^*)S^{k -p}(V)$.  Since $D$ is $G$-invariant it
  follows that each $D_k$ is $G$-invariant.  We have:
  $$
  \phi(D) = \sum_k \phi(D_k), \quad \phi(D_k) =
  \sum_{\{|\tti|=k, |\ttj| = k -p\}}
  \overline{a_{\tti,\ttj}} z^{\ttj}\partial^\tti.
  $$
  From the previous expression we get that $\phi(D_k)(1) =
  0$ when $k >0$, hence
  $$
  \phi(D)(1) = \phi(D_0)(1) = \phi(D_0) = \sum_{|\ttj| = -p}
  \overline{a_{0,\ttj}} z^{\ttj}.
  $$
  Assume that $D_0 = \sum_{|\ttj| = -p} a_{0,\ttj}
  \partial^\ttj \in S^p(V)^G$ is non zero. From
  Proposition~\ref{prop32}(ii) we can deduce that $p = -\ell
  n$, $\ell \ge 0$, $D_0 = \alpha\Delta^{\ell}$, $\alpha \ne
  0$.  By hypothesis $D(f^\ell) = \sum_k D_k(f^\ell) = 0$.
  Note that, since $|\ttj| = k + n\ell > n\ell$ implies
  $\partial^\ttj(f^\ell) =0$, we have $D_k(f^\ell) =
  \sum_{\{|\tti|=k, |\ttj| = k +n\ell\}} a_{\tti,\ttj}
  z^{\tti}\partial^\ttj(f^\ell) = 0$ if $k > 0$. Thus
  $D_0(f^\ell)=D(f^\ell) = 0$. If $\ell =0$ we get $D_0
  =\alpha = D_0(1) =D_0(f^\ell) =0$, contradiction.
  Therefore $\ell \ge 1$ and
  $$
  0 = D(f^\ell) = D_0(f^\ell) = \alpha\Delta^\ell(f^\ell) =
  \alpha b(\ell-1) \cdots b(0).
  $$
  It is easily seen that $\Delta^\ell(f^\ell) \ne 0$, see
  \cite[Proof of Proposition~2.22]{Kim} (this is equivalent
  to $b(j) \ne 0$ for all $j \in \N$), hence a
  contradiction. Thus: $\phi(D)(1) = \phi(D_0) = D_0=0$.
  
  We show that $\phi(D) \in J$ by induction on $t = \deg_z
  D$.  (In the case $t=0$ one has $D=D_0=0$.) Since $\Delta
  \in S(V)^G$ and $J$ is an ideal, one has $[D,\Delta] \in
  J$. Observe that
  $$
  [D,\Delta] = \sum_k [D_k,\Delta], \quad [D_k,\Delta] =
  \sum_{|\ttj| = k -p}\bigl(\sum_{|\tti| = k}a_{\tti,\ttj}
  [z^\tti,\Delta] \bigr) \partial^\ttj.
  $$
  But $\Delta \in S(V)^G$ implies that
  $\deg_z[z^\tti,\Delta] < k = |\tti|$, hence
  $\deg_z[D,\Delta] < t=\deg_z D$. Then, by induction,
  $\phi([D,\Delta]) = [\phi(\Delta),\phi(D)] = [f,\phi(D)]
  \in J$. If $m \ge 0$ we then have: $0 = [f,\phi(D)](f^m) =
  f \phi(D)(f^m) - \phi(D)(f^{m+1})$. This implies, by
  induction on $m$, $\phi(D)(f^{m+1}) = f\phi(D)(f^m) =
  f^{m+1}\phi(D)(1)$. It follows from the previous paragraph
  that $\phi(D)(f^{m+1}) = \phi(D)(1) =0$, i.e. $\phi(D) \in
  J$.
\end{proof}

\begin{cor}
  \label{cor36} {\rm (1)}
  There exists a $\C$-antilinear anti-auto\-mor\-phism $\phi
  : R \to R$ such that:
  $$
  \phi^2= \id, \quad \phi(z) = \delta, \quad \phi(\theta) =
  \theta, \quad \phi(U) = U.
  $$
  {\rm (2)} One has $U[\delta^{-1}] = R[\delta^{-1}]$.
\end{cor}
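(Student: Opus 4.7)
The plan is to obtain $\phi$ on $R$ by descending the anti-automorphism of $\mathcal{D}(V)$ to the quotient $R\cong \mathcal{D}(V)^G/J$, then read off the images of the generators from the known correspondences $z=\rad(f)$, $\delta=\rad(\Delta)$, $\theta=\rad(\bTheta)$. For part~(2), I would apply this $\phi$ to the localization identity from Lemma~\ref{lem33}.

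In detail, first I would recall from Lemma~\ref{lem34} that $\phi$ is a $\C$-antilinear anti-automorphism of $\mathcal{D}(V)$ with $\phi^2=\id$ stabilizing $\D=\DVG$, and from Proposition~\ref{prop35} that $\phi(J)=J$. Since $R=\D/J$, $\phi$ passes to the quotient yielding a $\C$-antilinear anti-automorphism $\phi:R\to R$ with $\phi^2=\id$. The defining formula $\rad(D)(p(z))=\psi(D(p(f)))$ together with $\phi(f)=\Delta$ and $\phi(\bTheta)=\bTheta$ (Lemma~\ref{lem34}) shows that under the quotient map, $\phi$ sends $z=\rad(f)$ to $\rad(\Delta)=\delta$ (this last equality was verified in the proof of Lemma~\ref{lem33}) and fixes $\theta=\rad(\bTheta)$; applying $\phi^2=\id$ then gives $\phi(\delta)=z$. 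Since $U=\C[z,\theta,\delta]$ and $\phi$ is $\C$-antilinear, $\phi(U)\subseteq U$, and equality follows from $\phi^2=\id$.

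For part~(2), I would simply apply $\phi$ to the equality $U[z^{-1}]=R[z^{-1}]$ of Lemma~\ref{lem33}. Because $\phi$ is an anti-automorphism, it preserves invertibility: from $\phi(z)\phi(z^{-1})=\phi(z^{-1}z)=\phi(1)=1$ (and symmetrically) we get $\phi(z^{-1})=\phi(z)^{-1}=\delta^{-1}$. Since $\phi$ stabilizes both $U$ and $R$, it sends $U[z^{-1}]$ to $U[\delta^{-1}]$ and $R[z^{-1}]$ to $R[\delta^{-1}]$, yielding $U[\delta^{-1}]=R[\delta^{-1}]$.

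The only substantive point is the descent in part~(1), and this has already been set up entirely by Lemma~\ref{lem34} and Proposition~\ref{prop35}; the remaining verifications are formal consequences of the explicit identities $\rad(f)=z$, $\rad(\Delta)=\delta$, $\rad(\bTheta)=\theta$ together with $\phi(f)=\Delta$, $\phi(\bTheta)=\bTheta$. I do not anticipate a real obstacle here, beyond checking that $\phi$ descends compatibly, which is ensured by $\phi(J)=J$.
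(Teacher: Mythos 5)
Your part~(1) is exactly the paper's argument: descend $\phi$ through $\rad$ using Lemma~\ref{lem34} and $\phi(J)=J$ from Proposition~\ref{prop35}, then read off $\phi(z)=\rad(\phi(f))=\rad(\Delta)=\delta$ and $\phi(\theta)=\theta$. Part~(2) also follows the paper's route (apply $\phi$ to the identity $U[z^{-1}]=R[z^{-1}]$ of Lemma~\ref{lem33}), but as written it skips the two verifications that are the actual content of that step. First, you use the rings $U[\delta^{-1}]$ and $R[\delta^{-1}]$ without justifying that they exist: in a noncommutative ring one cannot invert $\delta$ without an Ore-type condition, and the paper supplies it by noting that $\ad(\phi(u))^m = (-1)^m\,\phi\circ\ad(u)^m\circ\phi$, so the local nilpotence of $\ad(z)$ on $R$ transfers to $\ad(\delta)=\ad(\phi(z))$. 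Second, your computation $\phi(z^{-1})=\phi(z)^{-1}$ presupposes that $\phi$ is already defined on a ring containing $z^{-1}$; but $\phi$ was only constructed on $R$, so before "applying $\phi$ to the equality $U[z^{-1}]=R[z^{-1}]$" one must extend it — the paper does this by passing to $Q=\Frac(U)=\Frac(R)=\C(z,\partial_z)$ and setting $\phi(s^{-1}a)=\phi(a)\phi(s)^{-1}$, after which $\phi(R[z^{-1}])=\phi(R)[\phi(z)^{-1}]=R[\delta^{-1}]$ and likewise for $U$. These are routine but not vacuous; with them added your argument coincides with the paper's.
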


\begin{proof}
  (1) Let $\phi : \DVG \to \DVG$ be as in Lemma~\ref{lem34}.
  By Proposition~\ref{prop35} we can define $\phi : R \to R$
  by setting
  $$
  \phi(\rad(D)) = \rad(\phi(D)).
  $$
  Indeed: if $\rad(D) = \rad(D')$ we get $D- D' \in J =
  \Ker(\rad)$, hence $\phi(D) - \phi(D') \in J$ and
  $\rad(\phi(D)) = \rad(\phi(D'))$. The equality $\phi^2=
  \id$ is clear; by definition and Lemma~\ref{lem34}:
  $$
  \phi(z) = \rad(\phi(f)) = \rad(\Delta) = \delta, \quad
  \phi(\theta) = \rad(\phi(\bTheta)) = \rad(\bTheta) =
  \theta.
  $$
  From $U=\C[z,\delta,\theta]$ we then deduce $\phi(U)=U$.

  \noindent (2) Observe that $\ad(\phi(u))^m(r) = (-1)^m
  \phi(\ad(u)^m(r))$ for all $u,r \in R$. Since $\ad(z)$ is
  a locally nilpotent operator in $R$, it follows that
  $\ad(\phi(z))=\ad(\delta)$ has the same property. We can
  therefore construct the $\C$-algebras $U[\delta^{-1}]
  \subset R[\delta^{-1}]$.

  Let $Q= \Frac(U)$ be the fraction field of the Noetherian
  domain $U$.  By Lemma~\ref{lem33} we know that
  $Q=\C(z,\dz) = \Frac(R)$. It is easy to see that $\phi$
  extends to $Q$ by $\phi(s^{-1}a) = \phi(a)\phi(s)^{-1}$
  for all $a\in r$, $0 \ne s \in R$. This gives a
  $\C$-antilinear anti-auto\-mor\-phism of $Q$. Then
  $\phi(R[z^{-1}])) = \phi(U[z^{-1}]) = \phi(U)[\delta^{-1}]
  = U[\delta^{-1}]$, which yields
  $U[\delta^{-1}]=\phi(R[z^{-1}]) = \phi(R)[\delta^{-1}]=
  R[\delta^{-1}]$, as desired.
\end{proof}

Let $M$ be a module over a $\C$-algebra $A$, then the
Gelfand-Kirillov of $M$ is denoted by $\gk_A M$ or simply
$\gk M$, see~\cite{MR}.

\begin{lem}
  \label{lem37}
  Let $r \in R$. Then:
  $$
  \gk_U (U + Ur)/U \le \gk_U U - 2 =0.
  $$
\end{lem}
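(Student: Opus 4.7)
The plan is to exploit the two localisations established in Lemma~\ref{lem33} and Corollary~\ref{cor36}~(2), namely $U[z^{-1}] = R[z^{-1}]$ (inside $\C[z^{\pm 1},\dz]$) and $U[\delta^{-1}] = R[\delta^{-1}]$. These identifications immediately yield, for an arbitrary $r \in R$, integers $N, M \in \N$ such that
\[
z^N r \in U \qquad\text{and}\qquad \delta^M r \in U.
\]

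Write $\bar r$ for the image of $r$ in the cyclic left $U$-module $M := (U+Ur)/U = U\bar r$. The two displayed memberships translate respectively into $z^N \bar r = 0$ and $\delta^M \bar r = 0$, so $M$ is a homomorphic image of $U\big/(Uz^N + U\delta^M)$. It therefore suffices to prove that the latter is finite-dimensional over $\C$, i.e.~that $\gk_U U/(Uz^N + U\delta^M) = 0$.

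To this end, equip $U \cong \Sph$ with the filtration whose associated graded is $\gr U \cong \C[X,Y,S]/(XY - S^n)$, with $X = \gr z$, $Y = \gr \delta$, $S = \gr \theta$, as recalled in the list following Definition~\ref{def24}. The induced filtration on the quotient produces a surjection of graded rings
\[
\C[X,Y,S]\big/\bigl(XY - S^n,\, X^N,\, Y^M\bigr) \sto
\gr\bigl(U\big/(Uz^N + U\delta^M)\bigr).
\]
In the commutative ring on the left, the relation $S^n = XY$ combined with $X^N = 0$ forces $S^{nN} = X^N Y^N = 0$, so all three generators $X,Y,S$ are nilpotent and the ring is finite-dimensional. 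The standard comparison of GK-dimensions under a good filtration~\cite{MR} then gives $\gk_U U/(Uz^N + U\delta^M) = 0$. Since $\gk_U U = \dim \gr U = 2$, the desired inequality $\gk_U M \le \gk_U U - 2 = 0$ follows.

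The argument is essentially bookkeeping; the single substantive input is the \emph{simultaneous} use of the two localisations, which in $\Spec \gr U$ cuts the support of $\gr M$ down to the zero-dimensional locus $\{X = Y = 0\}$ on the surface $XY = S^n$. The only minor verification is that $z$ and $\delta$ do have the expected principal symbols $X$ and $Y$ in the Cherednik-type filtration, which is built into the description of $\gr \Sph$ recalled in \S\ref{ssec21}.
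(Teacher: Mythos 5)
Your argument is correct and follows essentially the same route as the paper: obtain $z^N r \in U$ and $\delta^M r \in U$ from the two localisations, reduce to $U/(Uz^N+U\delta^M)$, and pass to the associated graded ring $\C[X,Y,S]/(XY-S^n)$ where nilpotence of $X,Y,S$ gives finite dimensionality. The only cosmetic difference is that you spell out the Ore-localisation step and the relation $S^{nN}=X^NY^N=0$, which the paper leaves implicit.
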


\begin{proof}
  From Corollary~\ref{cor36} we deduce that there exists
  $\nu \in \N$ such that $z^\nu r \in U$ and $\delta^\nu r
  \in U$. Therefore the $U$-module $(U + Ur)/U$ is a factor
  of $U/(U z^\nu + U \delta^\nu)$.  There exists on $U \cong
  \widetilde{U}/(\Omega)$ (cf.~Proposition~\ref{prop29}) a
  finite dimensional filtration such that $\gr(U)$ is
  isomorphic to the commutative algebra $\C[X,Y,S]/(XY -
  S^n)$, see \S\ref{ssec21} or \cite{Sm, MVdB}, where
  $\gr(z) = X$, $\gr(\delta) =Y$.
  It follows that the associated graded module of $U/(U
  z^\nu + U \delta^\nu)$ is a factor of $\gr(U)/(\gr(U)
  X^\nu + \gr(U)Y^\nu)$, which is finite dimensional. Hence
  the result.
\end{proof}

We now can prove the main result of this section.

\begin{thm}
  \label{thm38}
  One has $U=R$.
\end{thm}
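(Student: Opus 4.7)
The plan is to prove the stronger weight-by-weight statement for the $\ad(\theta)$-action. Since $\theta=\rad(\bTheta)\in U\subset R$, the subalgebras $R$ and $U$ are both stable under $\ad(\theta)$. Embedding them into $\C[z,\dz]=\bigoplus_{p\in\Z}\C[z,\dz]_p$, where $\C[z,\dz]_p=\{Q:[\theta,Q]=pQ\}$ is the integer weight space, a Vandermonde argument (applied to $r,\ad(\theta)(r),\ad(\theta)^2(r),\dots$ for any fixed $r\in R$) yields direct sum decompositions $R=\bigoplus_{p\in\Z}R_p$ and $U=\bigoplus_{p\in\Z}U_p$ with $R_p=R\cap\C[z,\dz]_p$ and similarly for $U$. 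It therefore suffices to show $R_p=U_p$ for every $p\in\Z$.

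For $p\ge 0$ I would compute both weight spaces explicitly. Using $[\theta,z]=z$ and $z^b\dz^b=\theta(\theta-1)\cdots(\theta-b+1)$, each element of $\C[z,\dz]_p$ admits a unique normal form $z^pq(\theta)$ with $q\in\C[\theta]$; hence $\C[z,\dz]_p=z^p\C[\theta]$. On the $U$ side, the isomorphism $\widetilde{U}/(\Omega)\isomto U$ of Proposition~\ref{prop29}, combined with the PBW basis of $\widetilde{U}$ and the defining relation $z\delta=b^*(\theta)$ of Proposition~\ref{prop28}(5), furnishes a basis of $U$ consisting of the monomials $\{z^a\theta^h,\ \theta^h\delta^b\}_{a,b,h\ge 0}$; those of weight $p\ge 0$ are exactly the $z^p\theta^h$, so $U_p=z^p\C[\theta]$. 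The sandwich $U_p\subset R_p\subset\C[z,\dz]_p$ then forces $R_p=U_p$ for every $p\ge 0$.

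For $p<0$ I would invoke the $\C$-antilinear anti-automorphism $\phi:R\to R$ of Corollary~\ref{cor36}, which satisfies $\phi(U)=U$, $\phi(z)=\delta$ and $\phi(\theta)=\theta$. From $\phi([\theta,r])=-[\theta,\phi(r)]$ together with the $\C$-antilinearity of $\phi$ applied to the real integer $p$, one sees that $\phi$ exchanges weight spaces: $\phi(R_p)=R_{-p}$ and $\phi(U_p)=U_{-p}$. Applying $\phi$ to the equality $R_p=U_p$ already obtained for $p\ge 0$ therefore gives $R_{-p}=U_{-p}$, and summing over all $p\in\Z$ yields $R=U$.

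The main technical obstacle I anticipate is the normal-form computation $U_p=z^p\C[\theta]$ for $p\ge 0$, which requires a careful reduction of an arbitrary monomial in $z,\delta,\theta$ via the relation $z\delta=b^*(\theta)$ to the PBW form provided by Proposition~\ref{prop29}; the rest of the argument is essentially formal. Lemma~\ref{lem37} does not appear explicitly in this route, but it offers an alternative closing argument: were $r\in R\setminus U$, the module $(U+Ur)/U$ would be non-zero and finite-dimensional, yet any $\theta$-eigenvector in it must have integer weight and, after left multiplication by a suitable power of $z$, land in a nonnegative weight, where $R_p=U_p$ forces its image to vanish, contradicting finiteness.
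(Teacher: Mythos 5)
Your proof is correct, and it takes a genuinely different route from the paper. The paper's argument is homological: it endows $U$ with a filtration so that $\gr(U)\cong\C[X,Y,S]/(XY-S^n)$ is Gorenstein and normal, deduces that $U$ is Auslander--Gorenstein and a maximal order, and then uses Bj\"ork's theorem on the unique maximal element of the family $\{U\subset M\subset \Frac(U) : \gk_U M/U\le \gk_U U-2\}$ together with Lemma~\ref{lem37} to conclude that $R$ is an equivalent order, hence equal to $U$. Your argument instead exploits the smallness of the $\ad(\theta)$-weight spaces in rank one: once one knows $\C[z,\dz]_p=z^p\C[\theta]$ for $p\ge 0$, the chain $z^p\C[\theta]\subseteq U_p\subseteq R_p\subseteq\C[z,\dz]_p=z^p\C[\theta]$ collapses with no computation at all (your detour through the PBW basis of $\widetilde U$ is not even needed for this inclusion), and the negative weights follow from the antilinear anti-automorphism $\phi$ of Corollary~\ref{cor36}, exactly as you say; the only inputs are Lemma~\ref{lem33} ($U\subseteq R$) and Corollary~\ref{cor36}, both established before the theorem. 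What each approach buys: yours is entirely elementary and makes the rank-one case transparent (it is essentially the decomposition $R=(\bigoplus_{k\ge0}\C[\theta]z^k)\oplus(\bigoplus_{p>0}\C[\theta]\delta^p)$ that the paper itself uses later in the proof of Theorem~\ref{thm410}(v)), whereas the paper's maximal-order argument is the one that survives when $\dim\h>1$, where the weight spaces of $\cD(\h/W)$ are far too large for a sandwich. One small caveat: your closing aside about rederiving the conclusion from Lemma~\ref{lem37} does not quite produce a contradiction as phrased (killing $\bar r$ by $z^N$ only recovers $z^Nr\in U$, which is already Corollary~\ref{cor36}), but since it is offered only as an optional alternative it does not affect the validity of the main argument.
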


\begin{proof}
  Endow $U$ with a filtration such that $\gr(U) \cong
  \C[X,Y,S]/(XY - S^n)$ as in the proof of the previous
  lemma. Observe that $\C[X,Y,S]/(XY - S^n)$ is commutative
  Gorenstein normal domain. By \cite[Theorem~3.9]{Bj2} $U$
  is Auslander-Gorenstein and by \cite{VdBVO} $U$ is a
  maximal order. Recall that $Q = \Frac(U)$ and consider the
  following family of finitely generated $U$-modules $M$:
  $$
  \calF = \bigl\{ U \subset M \subset Q : \gk_U M/U \le
  \gk_U U - 2\bigr\}.
  $$
  From \cite[Theorem~1.14]{Bj2} we know that $\calF$
  contains a unique maximal element $\tilde{M}$. By
  Lemma~\ref{lem37} we have $U + Ur \subset \tilde{M}$ for
  all $r \in R$; hence $R \subset \tilde{M}$. It follows
  that $R$ is finitely generated over $U$ with
  $Q=\Frac(U)=\Frac(R)$.  Thus $U=R$, since $U$ is a maximal
  order.
\end{proof}

\begin{rem}
\label{rem39}
Let $(G : V)$ be a representation of the connected reductive
group $G$ such that $\dim V\qmod G =1$. If $\CVG= \C[f]$ one
can define $\Delta \in S(V)^G$ and the polynomial $b(s) =
c\prod_{i=0}^{n-1} (s + \lambda_i +1)$ as in
Theorem~\ref{thm31}. Then, the proof of Theorem~\ref{thm38}
can be repeated to show that $R= \Im(\rad)= U(k)$ (where
$k_i = \lambda_i +\frac{i}{n}$, $0 \le i \le n-1$).
\end{rem}

\section{Multiplicity free representations}
\label{sec4}

\subsection{Generalities}
\label{ssec41}
Let $\tGV$ be a connected reductive group. Write $\tG = GC$,
$C \cong (\C^*)^c$, as in \S\ref{ssec31}. We adopt the
following notation:
\begin{itemize}
\item the Lie algebra of an algebraic group is denoted by
  the corresponding gothic character;
\item $TU$ is a Borel subgroup of $G$, $T$ being a maximal
  torus of $G$, hence $\tT U$ is a Borel subgroup of $\tG$,
  $\tT= TC$;
  
\item $\Rt$ is the root system of $(\g,\ft)$,
  $\Bt=\{\alpha_1,\dots,\alpha_\ell\}$ is a basis of $\Rt$
  and $\Rt^+$ is the set of associated positive roots;
\item $\Lambda$ is the weight lattice of $(\g,\ft)$, thus
  $\Lambda = \Z\vpi_1 \boplus \cdots \boplus \Z\vpi_\ell$
  where $\dual{\vpi_i}{\alpha_j} = \delta_{ij}$; $\Lambda^+
  = \N\vpi_1 \boplus \cdots \boplus \N\vpi_\ell$ denotes the
  dominant weights;
\item $\tLambda= \Lambda \boplus \Xt(C)$, with $\Xt(C) \cong
  \Z^c$; $\tLambda^+ = \Lambda^+ \boplus \Xt(C)$;
\item if $\tlambda \in \tLambda^+$, resp.~$\lambda \in
  \Lambda^+$, we denote by $E(\tlambda)$,
  resp.~$E(\lambda)$, an irreducible $\tfg$-module,
  resp.~$\g$-module, with highest weight $\tlambda$,
  resp.~$\lambda$; the dual of $E(\tlambda)$ is isomorphic
  to $E(\tlambda^*)$, $\tlambda^*= - w_0(\tlambda)$ where
  $w_0$ is the longest element of the Weyl group of $\Rt$
  (similarly for $\Elambda^*$).
\end{itemize}

We fix a finite dimensional representation $\tGV$ of the
reductive group $\tG$.  Then the rational $\tG$-module $\CV=
S(V^*)$ decomposes as
$$
\CV \cong \bigoplus_{\tlambda \in \tLambda^+}
E(\tlambda)^{m(\tlambda)}
$$
where $m(\tlambda) \in \N \cup \{\infty\}$.

\begin{defn}
  \label{def41}
  The representation $\tGV$ is called multiplicity free (MF
  for short) if $m(\tlambda) \le 1$ for all $\tlambda$.  In
  this case
  $$
  \CV = \bigoplus_{\tlambda \in \tLambda^+}
  V(\tlambda)^{m(\tlambda)}, \quad m(\tlambda) = 0,1
  $$
  where $\Vtlambda \subset S^{d(\tlambda)}(V^*)$ is
  isomorphic to $\Etlambda$; if $m(\tlambda) =1$,
  $d(\tlambda)$ is called the degree of $\tlambda$ in $\CV$.
\end{defn}

\begin{remark}
  The MF representations are classified \cite{Kac, BR, Lea}.
  We give in Appendix~\ref{A1} the list of $\tGV$ with $V$
  irreducible (see \cite{Kac}). For instance, the examples
  (1), (2), (3), (5) given in \S\ref{ssec32} are MF.
\end{remark}

{\em From now on, let $\tGV$ be a MF representation.}  The
following results can be found, for example, in \cite{BR,
  HU, Kac, Kn1}.

-- Set $\tGamma = \{\tlambda : m(\tlambda) = 1\}$, then
$\tGamma = \boplus_{i=0}^r \N \tlambda_i$ where the
$\tlambda_i$ are linearly independent over $\Q$.

-- The algebra of $U$-invariants $\CV^U = \C[h_0,\dots,h_r]$
is a polynomial ring. If $\tgamma = \sum_i m_j \tlambda_j
\in \tGamma$, one has $\Vtgamma = U(\tfg).h^{\tgamma}$ where
$h^{\tgamma} = h_0^{m_0}\cdots h_r^{m_r}$ is a highest
weight vector of $\Vtgamma$.  In particular: $h_j =
h^{\tlambda_j}$, $d(\tgamma)= \sum_j m_j d(\tlambda_j)$.

-- The representation $\tGV$ is a prehomogeneous vector
space. Let $f_0,\dots,f_m$ be the basic relative invariants
of this PHV and let $\chi_j \in \Xt(\tG) = \Xt(C)$, $0 \le j
\le m$, be their weights. After identification of $\Xt(C)$
with a subgroup of $\tLambda$ as above, one can number the
$\tlambda_j$ so that
$$
\tlambda_0 \equiv \chi_0, \dots, \tlambda_m \equiv \chi_m,
\quad h_0= f_0, \dots,h_m = f_m,
$$
thus $V(\tlambda_j)= V(\chi_j)$ is the one dimensional
$\tG$-module $\C f_j$.

\medskip

Let $p : \tLambda^+ = \Lambda^+ \boplus \Xt(C) \sto
\Lambda^+$
be the natural projection. Set
\begin{equation}
  \label{eq41}
  \tGamma= \Gamma_0 \bigoplus \Gamma, \quad \Gamma_0 =
  \bigoplus_{j=0}^m \N\tlambda_j= \bigoplus_{j=0}^m 
  \N\chi_j, \quad \Gamma = \bigoplus_{j=m+1}^r \N\tlambda_j.
\end{equation}
Using the results above, the next lemma is easy to prove.

\begin{lem}
  \label{lem42}
  One has:

  \noindent {\rm (a)} $\Gamma_0= \Xt(C) \cap \tGamma =
  \{\tgamma \in \tGamma : \tgamma(\ft) = 0\}$; $p$ induces a
  bijection $\Gamma \isomto p(\Gamma)$;

  \noindent {\rm (b)} let $\tgamma \in \tGamma$, then the
  $G$-module $V(\tgamma)$ is isomorphic to $E(p(\tgamma))$;

  \noindent {\rm (c)} let $\gamma, \gamma' \in \Gamma$, then
  the following are equivalent:
  \begin{enumerate}
  \item[{\rm (i)}] $\gamma = \gamma'$
  \item[{\rm (ii)}] $p(\gamma)= p(\gamma')$
  \item[{\rm (iii)}] $V(\gamma) \cong V(\gamma')$ as
    $G$-modules;
  \end{enumerate}
  \noindent {\rm (d)}
  the algebra $\CV^G$ of $G$-invariants is polynomial ring,
  more precisely:
  $$
  \CV^G= \C[f_0,\dots,f_m] = \bigoplus_{\gamma \in \Gamma_0}
  \C h^\gamma.
  $$
\end{lem}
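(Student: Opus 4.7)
The plan is to establish (a) first (which is the structural heart), and then read off (b), (c), (d) as consequences with only small additions.

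For (a), the main observation is that $\tilde{\lambda}_j$ is a character of $\tilde{G}$ (equivalently, an element of $\mathsf{X}(C)$) if and only if $V(\tilde{\lambda}_j)$ is one-dimensional, if and only if $p(\tilde{\lambda}_j) = 0$: indeed the basic relative invariants $h_0,\dots,h_m$ span the one-dimensional $\tilde{G}$-submodules $V(\tilde{\lambda}_j)$ for $0\le j\le m$, and by hypothesis $\tilde{\lambda}_j\equiv \chi_j$ for those indices, whereas for $j>m$ the module $V(\tilde{\lambda}_j)$ has dimension $>1$ and is already $G$-irreducible (so $G$ cannot act trivially on it, forcing $p(\tilde{\lambda}_j)\neq 0$). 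Given this, I would decompose any $\tilde{\gamma}\in\tilde{\Gamma}$ uniquely as $\tilde{\gamma}=\gamma_0+\gamma$ with $\gamma_0\in\Gamma_0$ and $\gamma\in\Gamma$, and observe that $p(\tilde{\gamma})=p(\gamma)$. The $\Q$-linear independence of $\tilde{\lambda}_{m+1},\dots,\tilde{\lambda}_r$ (inherited from that of all the $\tilde{\lambda}_i$) together with the fact that their $\mathsf{X}(C)$-components are zero (by the first observation) forces $p(\tilde{\lambda}_{m+1}),\dots,p(\tilde{\lambda}_r)$ to be $\Q$-linearly independent in $\Lambda$; hence $p$ is injective on $\Gamma$. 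From $p(\tilde{\gamma})=0\Leftrightarrow p(\gamma)=0\Leftrightarrow\gamma=0\Leftrightarrow\tilde{\gamma}\in\Gamma_0$, one obtains the chain of equalities, noting also that $\tilde{\gamma}\in\mathsf{X}(C)$ is equivalent to $\tilde{\gamma}(\mathfrak{t})=0$ via $\tilde{\mathfrak{t}}=\mathfrak{t}\oplus\mathfrak{c}$.

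For (b), since $C=Z(\tilde{G})^0$ is central in $\tilde{G}$ and $V(\tilde{\gamma})$ is a rational irreducible $\tilde{G}$-module, $C$ acts on $V(\tilde{\gamma})$ by a single character (namely the $\mathsf{X}(C)$-component of $\tilde{\gamma}$). Hence $V(\tilde{\gamma})$ is still irreducible as a $G=(\tilde{G},\tilde{G})$-module (as $\tilde{G}=GC$ and $C$ only rescales). Its highest weight for $(G,T)$ is by construction $p(\tilde{\gamma})$, so $V(\tilde{\gamma})\cong E(p(\tilde{\gamma}))$ as $G$-modules.

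Part (c) is then immediate: (i)$\Rightarrow$(ii) is trivial, (ii)$\Rightarrow$(i) is the injectivity of $p|_\Gamma$ from (a), and (ii)$\Leftrightarrow$(iii) follows from (b) together with the classification of irreducible $\g$-modules by their highest weights.

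For (d), I would decompose
\begin{equation*}
\C[V]^G=\bigoplus_{\tilde{\gamma}\in\tilde{\Gamma}}V(\tilde{\gamma})^G.
\end{equation*}
By (b), $V(\tilde{\gamma})^G\neq 0$ if and only if $E(p(\tilde{\gamma}))^G\neq 0$, which by Schur happens exactly when $p(\tilde{\gamma})=0$, i.e.~$\tilde{\gamma}\in\Gamma_0$ by (a); in that case $V(\tilde{\gamma})^G=V(\tilde{\gamma})=\C h^{\tilde{\gamma}}$. Writing $\gamma=\sum_{j=0}^{m}m_j\chi_j$ gives $h^\gamma=h_0^{m_0}\cdots h_m^{m_m}=f_0^{m_0}\cdots f_m^{m_m}$, so $\C[V]^G=\C[f_0,\dots,f_m]$, polynomial because the $\chi_0,\dots,\chi_m$ are $\Q$-linearly independent in $\Gamma_0$ (inherited from the full basis $\tilde{\lambda}_0,\dots,\tilde{\lambda}_r$). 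The only step requiring any real care is the equivalence $V(\tilde{\lambda}_j)$ is 1-dimensional $\Leftrightarrow$ $p(\tilde{\lambda}_j)=0$ used in (a); the rest is a formal unwinding of the semisimple/central decomposition $\tilde{G}=GC$.
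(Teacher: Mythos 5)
The paper offers no written proof (it says the lemma ``is easy to prove'' from the recalled facts), so I can only judge your argument on its own terms, and it has a genuine gap at its structural heart: the injectivity of $p$ on $\Gamma$. You deduce it from the assertion that the $\Xt(C)$-components of $\tlambda_{m+1},\dots,\tlambda_r$ vanish, citing your ``first observation''. But that observation only controls the \emph{$\Lambda$-components}: it says $p(\tlambda_j)=0$ exactly for $j\le m$; it says nothing about the central characters of the remaining $V(\tlambda_j)$, and these are almost never trivial. For instance, for $(\GL(n):S^2\C^n)$ the generators $h_1,\dots,h_{n-1}$ (the proper principal minors) all carry nontrivial characters of $C=\C^*$. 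Moreover, the conclusion you want is not a formal consequence of the $\Q$-independence of the $\tlambda_j$ alone: abstractly one can have $\tlambda_0=\chi_1$, $\tlambda_1=\varpi+\chi_2$, $\tlambda_2=\varpi+2\chi_2$ pairwise independent in $\Lambda\oplus\Xt(C)$ with $p(\tlambda_1)=p(\tlambda_2)$. So some representation-theoretic or PHV input is unavoidable here. The standard fix: if $\gamma,\gamma'\in\Gamma$ satisfy $p(\gamma)=p(\gamma')$, then $h^\gamma/h^{\gamma'}$ is a $\tT U$-eigenvector of $\ft$-weight $0$, hence a $G$-invariant, $C$-semiinvariant rational function, i.e.\ a rational relative invariant of $\tGV$; by the theory of prehomogeneous spaces (\cite[Theorem~2.9]{Kim}, recalled in \S3.1) it equals $c\prod_{i\le m}f_i^{n_i}$, so $\gamma-\gamma'\in\Z\Gamma_0$, and the independence of the $\tlambda_j$ then forces $\gamma=\gamma'$. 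This simultaneously repairs (a), (c)(ii)$\Rightarrow$(i), and the identification $\Gamma_0=\{\tgamma:p(\tgamma)=0\}$ on which your proof of (d) relies.

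A secondary, related weakness: your justification that $\dim V(\tlambda_j)>1$ for $j>m$ is circular as written (you derive $p(\tlambda_j)\neq0$ from $\dim>1$, which is the thing to be shown). The correct argument is again via relative invariants: if $V(\tlambda_j)$ were one-dimensional, $h_j$ would be a polynomial relative invariant, hence a monomial in $f_0=h_0,\dots,f_m=h_m$, contradicting the algebraic independence of $h_0,\dots,h_r$ in the polynomial ring $\CV^U$ unless $j\le m$. Parts (b), the equivalence (ii)$\Leftrightarrow$(iii) in (c), and the Schur-lemma computation of $\CV^G$ in (d) are fine once these points are supplied.
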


Set:
\begin{equation}
  \label{eq42}
  H(V^*)  = \bigoplus_{\gamma \in \Gamma} V(\gamma).
\end{equation}

\begin{lem}
  \label{lem43}
  The multiplication map:
  $$
  \msf : H(V^*) \otimes_\C S(V^*)^G \lto S(V^*)= \CV
  $$
  is an isomorphism of $G$-modules.
\end{lem}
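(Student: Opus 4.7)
My plan is to decompose the map $\msf$ along the direct sum decomposition $\tGamma = \Gamma_0 \oplus \Gamma$ (cf.~\eqref{eq41}) and reduce the claim to showing that multiplication by a $G$-invariant $h^{\gamma_0}$ identifies $V(\gamma)$ with $V(\gamma + \gamma_0)$. First, $\msf$ is a morphism of $G$-modules because $H(V^*)$ is a $G$-submodule of $\CV$, $\CV^G$ carries the trivial $G$-action, and multiplication in $\CV$ is $G$-equivariant. By Lemma~\ref{lem42}(d) the algebra $\CV^G$ has the $\C$-basis $\{h^{\gamma_0}\}_{\gamma_0 \in \Gamma_0}$, so the source decomposes as
$$
H(V^*) \otimes_\C \CV^G \; = \; \bigoplus_{(\gamma,\gamma_0) \in \Gamma \times \Gamma_0} V(\gamma) \otimes \C h^{\gamma_0}.
$$

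Next, fix $\gamma \in \Gamma$ and $\gamma_0 \in \Gamma_0$, and set $\tgamma = \gamma + \gamma_0 \in \tGamma$. The key step is to show that the restriction
$$
\msf_{\gamma,\gamma_0} : V(\gamma) \otimes \C h^{\gamma_0} \lto \CV, \qquad v \otimes h^{\gamma_0} \mapsto h^{\gamma_0} v,
$$
is injective with image exactly $V(\tgamma)$. Injectivity is immediate because $\CV$ is a domain and $h^{\gamma_0} \ne 0$. Since $h^{\gamma_0} \in \CV^G$, multiplication by $h^{\gamma_0}$ is $G$-equivariant, so $h^{\gamma_0} V(\gamma)$ is a $G$-submodule of $\CV$ isomorphic to $V(\gamma)$; by Lemma~\ref{lem42}(b) this $G$-module is isomorphic to $E(p(\gamma)) = E(p(\tgamma))$, so it is irreducible of the correct dimension. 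It contains the vector $h^{\gamma_0} h^\gamma = h^{\tgamma}$, which is the $U$-highest weight vector of $V(\tgamma)$ of $\tT$-weight $\tgamma$. Thus $h^{\gamma_0} V(\gamma)$ is an irreducible $G$-submodule of the isotypic piece $V(\tgamma)$ of $\CV$ meeting it nontrivially; by Lemma~\ref{lem42}(b) these spaces have the same dimension, so they coincide.

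Finally, summing over $(\gamma,\gamma_0) \in \Gamma \times \Gamma_0$ and using the bijection $\Gamma \times \Gamma_0 \isomto \tGamma$, $(\gamma,\gamma_0) \mapsto \gamma + \gamma_0$, provided by the direct sum decomposition~\eqref{eq41}, the map $\msf$ becomes a sum of the isomorphisms $\msf_{\gamma,\gamma_0} : V(\gamma) \otimes \C h^{\gamma_0} \isomto V(\tgamma)$ landing in the distinct $\tG$-isotypic components of $\CV$. These target components are linearly independent in $\CV$ (they form the multiplicity-free $\tG$-decomposition of $\CV$), so the total map is injective, and its image is $\bigoplus_{\tgamma \in \tGamma} V(\tgamma) = \CV$. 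Hence $\msf$ is an isomorphism of $G$-modules. The only slightly delicate point is the identification $V(\tgamma) = h^{\gamma_0} V(\gamma)$, but this follows cleanly from the multiplicity-freeness and the factorization $h^{\tgamma} = h^\gamma h^{\gamma_0}$ of the $U$-highest weight vector.
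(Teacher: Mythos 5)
Your proof is correct and follows essentially the same route as the paper: both reduce the claim to showing that multiplication by the invariant $h^{\gamma_0}$ carries $V(\gamma)$ isomorphically onto $V(\tgamma)$ for $\tgamma = \gamma + \gamma_0$, and both deduce injectivity of the total map from the directness of $\tGamma = \Gamma_0 \boplus \Gamma$ together with the multiplicity-free decomposition of $\CV$. The only (cosmetic) difference is in identifying $h^{\gamma_0}V(\gamma)$ with $V(\tgamma)$: the paper computes $V(\tgamma) = U(\tfg).h^{\tgamma} = U(\g).h^{\gamma}h^{\gamma_0}$ using that $C$ acts by scalars, whereas you invoke the irreducibility of both $G$-modules and their common nonzero vector $h^{\tgamma}$ — which is valid, though your phrasing ``is an irreducible $G$-submodule of the isotypic piece $V(\tgamma)$'' presupposes the containment you are proving; the clean statement is that two irreducible submodules with nonzero intersection coincide.
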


\begin{proof}
  Let $\tgamma = \gamma + \gamma_0$, $\gamma \in \Gamma,
  \gamma_0 \in \Gamma_0$. Observe that $C$ acts by scalars
  on the simple $\tG$-module $V(\tgamma)$; thus, since
  $\tfg= \g \boplus \fc$, we have: $V(\tgamma) =
  U(\tfg).h^{\tgamma} = U(\g)U(\fc).h^\tgamma =
  U(\g).h^\tgamma = U(\g).h^\gamma h^{\gamma_0} =
  (U(\tfg).h^\gamma) h^{\gamma_0} = V(\gamma)h^{\gamma_0}$.
  Therefore $V(\tgamma) =\msf(V(\gamma) \otimes \C
  h^{\gamma_0})$ with $V(\gamma) \subset H(V^*)$,
  $h^{\gamma_0} \in S(V^*)^G$. Suppose that $\msf(\sum_i v_i
  \otimes h^{\mu_i}) =0$ with $\mu_i \in \Gamma_0$, $v_i \in
  V(\lambda_i)$, the $\lambda_i \in \Gamma$ being pairwise
  distinct. Observe that $\lambda_i + \mu_i = \lambda_j +
  \mu_j$ forces $\lambda_i - \lambda_j = \mu_j - \mu_i \in
  \Gamma \cap \Gamma_0 = (0)$. Therefore $v_ih^{\mu_i} \in
  V(\lambda_i + \mu_i)$ and $\sum_i v_ih^{\mu_i } = 0$ yield
  $v_ih^{\mu_i} =0$, hence $v_i =0$ for all $i$.
\end{proof}

Recall that we identify $S(V)$ with the algebra of
differential operators with constant coefficients. Consider
the non-degenerate pairing
$$
S(V) \otimes S(V^*) \lto \C, \quad u \otimes \vphi \mapsto
\ascal{u}{\vphi} = u(\vphi)(0),
$$
which extends the duality pairing $V \otimes V^* \to \C$.
It is easily shown that:
\begin{itemize}
\item $\ascal{u}{S^j(V^*)} = 0$ if $u \in S^i(V)$ and $i \ne
  j$;
\item $\ascal{\phantom{s}}{\phantom{s}}$ is
  $\tG$-equivariant.
\end{itemize}
Therefore $u \mapsto \ascal{u}{\phantom{s}}$ gives a
$\tG$-isomorphism from $S^i(V)$ onto $S^i(V^*)^*$. In
particular, the representation $\tGVd$ is MF and we can
write:
$$
S^i(V) = \bigoplus_{\{\tgamma \in \tGamma, d(\tgamma)= i\}}
Y(\tgamma), \quad Y(\tgamma) \cong V(\tgamma)^* \cong
E(\tgamma^*).
$$
Hence, $Y(\tgamma)=U(\tfg).\Delta^\tgamma $ where
$\Delta^\tgamma$ is a lowest weight vector (of weight
$-\tgamma$). When $\tgamma = \tlambda_j$ we set
$\Delta^{\tlambda_j} =\Delta_j$. Note that $\Delta^\tgamma =
\prod_{i=0}^r \Delta_i^{m_i}$ if $\tgamma = \sum_i m_i
\tlambda_i$.

If $0 \le i \le m$ we have $Y(\tlambda_i) = \C \Delta_i$
where $\Delta_i$ has weight $-\lambda_i \equiv \chi_i^{-1}$.
Clearly, we may take $\Delta_i = \partial(f_i^*)$ where
$f_i^*$ is the relative invariant constructed as in
\S\ref{ssec31}. We then have
\begin{equation}
  \label{eq43}
  S(V)^G= \C[\Delta_0,\dots,\Delta_m] = \bigoplus_{\gamma \in
    \Gamma_0} \C \Delta^\gamma
\end{equation}
(which is a polynomial ring).

If $\mu = \sum_i m_i \tlambda_i$ and $\nu = \sum_i n_i
\tlambda_i$ are elements of $\tGamma$, we say that $\mu \le
\nu$ if $m_i \le n_i$ for all $i$. Let $k : \tGamma \sto
\Gamma_0$ be the projection associated to the decomposition
defined in~\eqref{eq41}; thus each $\tlambda \in \tLambda$
writes uniquely $\gamma + k(\tlambda)$, $\gamma \in \Gamma$,
$k(\tlambda) \in \Gamma_0$.

\begin{lem}
  \label{lem44}
  Let $\tlambda\in \Gamma_0$ and $\tgamma \in \tGamma$.
  Then:
  \begin{enumerate}
  \item[{\rm (a)}] $\Delta^\tgamma(h^\tgamma) \ne 0$;
  \item[{\rm (b)}] $\Delta^\tlambda(h^\tgamma) \ne 0 \iff
    \tlambda \le k(\tgamma)$, and in this case
    $\Delta^\tlambda$ gives an isomorphism of $G$-modules,
    $\vphi \mapsto \Delta^\tlambda(\vphi)$, from $\Vtgamma$
    onto $V(\tgamma - \tlambda)$.
  \end{enumerate}
\end{lem}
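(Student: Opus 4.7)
The plan is to derive part~(a) from the non-degenerate $\tG$-equivariant pairing $\ascal{u}{\vphi} = u(\vphi)(0)$ on $S^i(V) \otimes S^i(V^*)$, and then to deduce part~(b) from~(a) using the commutativity of $S(V)$ together with the structure of $\CV^U$.

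For~(a), set $i = d(\tgamma)$. Under the decompositions $S^i(V) = \bigoplus_\sigma Y(\sigma)$ and $S^i(V^*) = \bigoplus_\sigma V(\sigma)$ into pairwise non-isomorphic irreducible $\tG$-modules with $Y(\sigma) \cong V(\sigma)^*$, the pairing is block-diagonal: it vanishes on $Y(\tgamma) \times V(\sigma)$ for $\sigma \ne \tgamma$ and restricts to a non-degenerate pairing on $Y(\tgamma) \times V(\tgamma)$. By $\tT$-equivariance, the weight-$(-\tgamma)$ line $\C\Delta^\tgamma \subset Y(\tgamma)$ can pair nontrivially only with the weight-$\tgamma$ line $\C h^\tgamma \subset V(\tgamma)$, and non-degeneracy then forces $\Delta^\tgamma(h^\tgamma) = \ascal{\Delta^\tgamma}{h^\tgamma} \ne 0$, which is~(a).

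For~(b), I first observe that $\Delta^\tlambda \in S(V)^G$ since $\tlambda \in \Gamma_0$, hence $\Delta^\tlambda(h^\tgamma)$ is a $U$-invariant of $\tT$-weight $\tgamma - \tlambda$. Now $\CV^U = \C[h_0,\dots,h_r]$, and the $\Q$-linear independence of the weights $\tlambda_i$ implies that the weight-$(\tgamma - \tlambda)$ subspace of $\CV^U$ is $\C h^{\tgamma - \tlambda}$ if $\tgamma - \tlambda \in \tGamma$ and is zero otherwise. Writing $\tgamma = \gamma + k(\tgamma)$ with $\gamma \in \Gamma$, the condition $\tgamma - \tlambda = \gamma + (k(\tgamma) - \tlambda) \in \Gamma \boplus \Gamma_0$ is equivalent to $\tlambda \le k(\tgamma)$; this settles the implication ``$\Rightarrow$'' and shows moreover that in all cases $\Delta^\tlambda(h^\tgamma) \in \C h^{\tgamma - \tlambda}$. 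For the converse, when $\tlambda \le k(\tgamma)$ the commutativity of $S(V)$ gives the factorisation $\Delta^\tgamma = \Delta^\tlambda \Delta^{\tgamma - \tlambda}$, so applying both sides to $h^\tgamma$ and invoking~(a) yields $0 \ne \Delta^\tgamma(h^\tgamma) = \Delta^{\tgamma - \tlambda}(\Delta^\tlambda(h^\tgamma))$, hence $\Delta^\tlambda(h^\tgamma) \ne 0$.

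The isomorphism statement is then formal. Since $\Delta^\tlambda \in S(V)^G$ is $G$-equivariant and $V(\tgamma) \cong E(p(\tgamma))$ is $G$-irreducible by Lemma~\ref{lem42}(b), the vector $h^\tgamma$ cyclically generates $V(\tgamma)$ under $G$, so $\Delta^\tlambda(V(\tgamma)) = G \cdot \Delta^\tlambda(h^\tgamma)$; the non-zero vector $\Delta^\tlambda(h^\tgamma) \in \C h^{\tgamma - \tlambda}$ in turn generates the $G$-irreducible $V(\tgamma - \tlambda) \cong E(p(\tgamma))$, so the image equals $V(\tgamma - \tlambda)$, and Schur's lemma upgrades this non-zero $G$-map between $G$-irreducibles of the same type to an isomorphism. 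The only substantive point is the non-vanishing in~(a); once the block-diagonal structure of the pairing is in hand, the weight argument is forced, and~(b) follows essentially by formal manipulation.
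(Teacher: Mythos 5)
Your proof is correct and follows essentially the same route as the paper's: part~(a) via the non-degenerate $\tG$-equivariant pairing identifying $\Delta^\tgamma$ with a lowest weight vector of $V(\tgamma)^*$ (you merely spell out the block-diagonality and weight argument that the paper compresses into ``$\beta(\Delta^\tgamma)$ is a lowest weight vector, hence pairs non-trivially with $h^\tgamma$''), and part~(b) via the weight of the $U$-invariant $\Delta^\tlambda(h^\tgamma)$ in $\CV^U$ for the forward implication, the factorisation $\Delta^\tgamma=\Delta^{\tgamma-\tlambda}\Delta^\tlambda$ together with~(a) for the converse, and Schur's lemma for the isomorphism statement. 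No gaps.
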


\begin{proof}
  Set $\tlambda= \sum_{i=0}^m p_i \tlambda_i$, $\tgamma =
  \sum_{i=0}^r q_i \tlambda_i$.
  
  (a) Recall that we have an isomorphism of $\tG$-modules,
  $\beta : Y(\tgamma) \isomto V(\tgamma)^*$, $\beta(u) =
  \ascal{u}{\phantom{s}}$. Thus $\beta(\Delta^\tgamma)$ is a
  lowest vector in $V(\tgamma)^*$, which implies
  $\beta(\Delta^\tgamma)(h^\tgamma) =
  \Delta^\tgamma(h^\tgamma)(0) \ne 0$. But $\Delta^\tgamma
  \in S^{d(\tgamma)}(V)$ where $d(\tgamma)$ is the degree of
  $\tgamma$, therefore $\Delta^\tgamma(h^\tgamma) \in \C$.
  Thus $\Delta^\tgamma(h^\tgamma) =
  \Delta^\tgamma(h^\tgamma)(0) \ne 0$.
  
  (b) Since $\Delta^\tlambda \in S(V)^G$ we have
  $\Delta^\tlambda(V(\tgamma)) =
  \Delta^\tlambda(U(\g)h^\tgamma) =
  U(\g)\Delta^\tlambda(h^\tgamma)$. By Lemma~\ref{lem42} we
  know that $V(\tgamma)$ is a simple $G$-module, it follows
  that the map $\Delta^\tlambda : V(\tgamma) \to
  \Delta^\tlambda(V(\tgamma))$ is either $0$ or an
  isomorphism of $G$-modules.
  
  Notice that $\Delta^\tlambda(h^\tgamma) \in \CV^U$ has
  weight $\tgamma - \tlambda = k(\tgamma) -\tlambda +
  \sum_{i=m+1}^r q_i \tlambda_i$ where $k(\tgamma) -\tlambda
  = \sum_{i=0}^m (q_i - p_i) \tlambda_i$. Therefore if $q_i
  < p_i$ for some $i =0,\dots,m$ we must have
  $\Delta^\tlambda(h^\tgamma)=0$,
  i.e.~$\Delta^\tlambda(h^\tgamma) \ne 0$ implies $\tlambda
  \le k(\tgamma)$. Conversely, suppose that $\tlambda \le
  k(\tgamma)$; then, by (a),
  $$
  0 \ne \Delta^\tgamma(h^\tgamma) = \Delta^{\tgamma -
    \tlambda}\Delta^\tlambda(h^\tgamma)
  $$
  which forces $\Delta^\tlambda(h^\tgamma) \ne 0$.
  
  Now assume $\tlambda \le k(\tgamma)$. Then $0 \ne
  \Delta^\tlambda(h^\tgamma) \in \CV^U$ implies that
  $\Delta^\tlambda(h^\tgamma)$ is a highest weight vector in
  $V(\tgamma - \tlambda)$, hence $\Delta^\tlambda :
  V(\tgamma) \longisomto \Delta^\tlambda(V(\tgamma)) =
  V(\tgamma - \tlambda)$.
\end{proof}

Recall the definition of $H(V^*)$ given in~\eqref{eq42} and
set $S_+(V) = \boplus_{i >0} S^i(V)$.  The next proposition
identifies $H(V^*)$ with harmonic elements.

\begin{prop}
  \label{prop45}
  We have:
  \begin{align*}
    H(V^*) & = \{\vphi \in \CV : \Delta_0(\vphi)= \cdots =
    \Delta_m(\vphi)=0\}
    \\
    & = \bigl\{\vphi \in \CV : D(\vphi) = 0 \ \text{for all
      $D \in S_+(V)^G$} \bigr\}.
  \end{align*}
\end{prop}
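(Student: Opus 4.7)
The plan is to prove the two equalities separately, dispatching the second one first. If $A = \{\vphi \in \CV : \Delta_0(\vphi) = \cdots = \Delta_m(\vphi) = 0\}$ and $B = \{\vphi \in \CV : D(\vphi) = 0 \text{ for all } D \in S_+(V)^G\}$, then $B \subseteq A$ is trivial since each $\Delta_i \in S_+(V)^G$. For the reverse, the identity $S(V)^G = \C[\Delta_0, \ldots, \Delta_m]$ from~\eqref{eq43} shows that every $D \in S_+(V)^G$ can be written $D = \sum_{j=0}^m D_j \Delta_j$ with $D_j \in S(V)^G$; since $S(V)$ consists of commuting constant-coefficient differential operators, $D(\vphi) = \sum_j D_j(\Delta_j(\vphi))$ vanishes on $A$.

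The inclusion $H(V^*) \subseteq A$ will be a direct application of Lemma~\ref{lem44}(b). For $\gamma \in \Gamma$ one has $k(\gamma) = 0$, and since each $\tlambda_i$ ($0 \le i \le m$) is a non-zero generator, $\tlambda_i \not\le k(\gamma)$; by Lemma~\ref{lem44}(b) this gives $\Delta_i(h^\gamma) = 0$. Because $\Delta_i \in S(V)^G$ commutes with the $G$-action on $\CV$ and $V(\gamma) = U(\g).h^\gamma$ (by Lemma~\ref{lem42}(b)), we conclude $\Delta_i(V(\gamma)) = 0$ for every $i$, hence $V(\gamma) \subseteq A$ and therefore $H(V^*) \subseteq A$.

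The harder inclusion is $A \subseteq H(V^*)$. Given $\vphi \in A$, I would use the multiplicity-free decomposition to write $\vphi = \sum_{\tgamma \in \tGamma} \vphi_\tgamma$ with $\vphi_\tgamma \in V(\tgamma)$ (only finitely many nonzero). The key observation is that $\Delta_i$ spans the one-dimensional $\tG$-submodule $Y(\tlambda_i) \subset S(V)$ of $\tT$-weight $-\tlambda_i$, and commutes with $G$; consequently $\Delta_i$ sends each $V(\tgamma)$ into the isotypic component $V(\tgamma - \tlambda_i)$ (understood to be $0$ when $\tgamma - \tlambda_i \notin \tGamma$). Since $\tgamma \mapsto \tgamma - \tlambda_i$ is injective, the various shifted targets are pairwise distinct, so $\Delta_i(\vphi) = 0$ forces $\Delta_i(\vphi_\tgamma) = 0$ for every $\tgamma$. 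Applying Lemma~\ref{lem44}(b) once more: for any $\vphi_\tgamma \ne 0$, we must have $\tlambda_i \not\le k(\tgamma)$ for every $i \in \{0, \ldots, m\}$. Writing $k(\tgamma) = \sum_{i=0}^{m} q_i \tlambda_i \in \Gamma_0$ with $q_i \in \N$, this forces $q_i = 0$ for all $i$, whence $k(\tgamma) = 0$ and $\tgamma \in \Gamma$. Thus $\vphi \in \bigoplus_{\gamma \in \Gamma} V(\gamma) = H(V^*)$.

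The only delicate point is the decoupling $\Delta_i(\vphi) = 0 \Longrightarrow \Delta_i(\vphi_\tgamma) = 0$; this rests on correctly pinning down the $\tG$-weight of $\Delta_i$ so that its image lies in a single known $\tG$-isotypic component of $\CV$, which is the step that crucially uses the multiplicity-freeness of $\tGV$. Once this is established, the argument is pure weight combinatorics inside $\tGamma = \Gamma_0 \oplus \Gamma$.
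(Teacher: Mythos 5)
Your proof is correct and follows essentially the same route as the paper: the crucial inclusion $\{\vphi : \Delta_i(\vphi)=0 \ \forall i\} \subseteq H(V^*)$ is obtained, exactly as in the paper, by decomposing $\vphi$ into isotypic components, using that $\Delta_i$ maps $V(\tgamma)$ into the single component $V(\tgamma - \tlambda_i)$ so that the conditions decouple, and then invoking Lemma~\ref{lem44}(b) to force $k(\tgamma)=0$. The only cosmetic difference is that you identify the two right-hand sides up front via $S(V)^G=\C[\Delta_0,\dots,\Delta_m]$, whereas the paper closes the same circle of inclusions by showing directly that $H(V^*)$ is annihilated by all of $S_+(V)^G$.
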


\begin{proof}
  From~\eqref{eq43} we know that $S_+(V)^G = \bigoplus_{0
    \ne \tlambda \in \Gamma_0} \C \Delta^\tlambda$.  Let
  $\vphi \in V(\tgamma)$ for some $\tgamma \in \Gamma$ and
  let $0 \ne \tlambda \in \Gamma_0$. We have $k(\tgamma) =
  0$, thus $\Delta^\tlambda(\vphi) = 0$ by
  Lemma~\ref{lem44}(b).  This shows that $H(V^*) \subset
  \bigl\{\vphi \in \CV : D(\vphi) = 0 \ \text{for all $D \in
    S_+(V)^G$} \bigr\}$.
  
  Conversely assume that $\vphi = \sum_{\tgamma \in \tGamma}
  \vphi_{\tgamma}$, $\vphi_{\tgamma} \in \Vtgamma$,
  satisfies $\Delta_i(\vphi) = 0$ for all $i=0,\dots,m$.
  Fix $i \in \{0,\dots,m\}$. By Lemma~\ref{lem44}(b) we get
  that $\Delta_i(V(\tgamma)) =0$ if $\tlambda_i \not\le
  k(\tgamma)$ and $\Delta_i : \Vtgamma \isomto V(\tgamma -
  \tlambda_i)$ if $\tlambda_i \le k(\tgamma)$. Therefore
  $\Delta_i(\vphi) = \sum_{\tgamma \in \tGamma}
  \Delta_i(\vphi_{\tgamma})$ belongs to
  $\bigoplus_{\{\tgamma \in \tGamma, \tlambda_i \le
    k(\tgamma)\}} V(\tgamma - \tlambda_i)$. Since
  $\Delta_i(\vphi) =0$ we can deduce that
  $\Delta_i(\vphi_{\tgamma}) =0$ for all $\tgamma$ such that
  $\tlambda_i \le k(\tgamma)$. By the previous remark this
  implies $\vphi_{\tgamma} =0$ when $\tlambda_i \le
  k(\tgamma)$, thus $\vphi = \sum_{\{\tgamma \in \tGamma,
    \tlambda_i \not\le k(\tgamma)\}}\vphi_{\tgamma}$.
  Observe that $\tlambda_i \not\le k(\tgamma)$ means that
  the weight $\tlambda_i$ does not appear in $\tgamma$.
  Since this holds for all $i=0,\dots,m$ we deduce that
  $\vphi = \sum_{\tgamma \in \Gamma}\vphi_{\tgamma}$. Hence
  the result.
\end{proof}

\begin{rem}
  \label{rem46}
  If we set $H(V)= \bigoplus_{\gamma \in \Gamma} Y(\gamma)$
  we obtain that $S(V) \cong H(V) \otimes S(V)^G$ as
  $G$-modules, with an analogous characterization of $H(V)$.
\end{rem}

We now recall some facts about invariant differential
operators on MF representations, cf.~\cite{BR, HU, Kn1}.
Recall that the $\CV$-module $\DV$ identifies with $S(V^*)
\otimes S(V)$ through the multiplication map
$$
\msf : S(V^*) \otimes S(V) \isomto \DV, \quad \vphi \otimes
f \mapsto \vphi f(\partial).
$$
The isomorphism $\msf$ is also $\tG$-equivariant, hence
$\DV^{\tG} \cong \bigoplus_{\tgamma \in \tGamma} [V(\tgamma)
\otimes Y(\tgamma)]^{\tG}$. But, since $Y(\tgamma) \cong
\Vtgamma^*$, $[V(\tgamma) \otimes Y(\tgamma)]^{\tG} = \C
E_\tgamma$ is one dimensional.  Let
$$
E_\tgamma(x,\dx) = \frac{1}{\dim V(\tgamma)}\msf(E_\tgamma)
\in \DV^{\tG}
$$
be the operator corresponding to $E_\tgamma$. The
$E_\tgamma(x,\partial_x)$ are called the {\em normalized
  Capelli operators}. Set

\begin{equation}
  \label{eq44}
  E_j = E_{\tlambda_j}(x,\partial_x), \quad 0 \le j \le r. 
\end{equation}

It is known \cite[Proposition~7.1]{HU} that $\tGV$
multiplicity free is equivalent to $\DV^{\tG}$ commutative.
The operators $E_j$ give a set of generators for this
algebra, cf.~\cite[Theorem~9.1]{HU} or
\cite[Corollary~7.4.4]{BR}:

\begin{thm}[Howe-Umeda]
  \label{thm46}
  $\tD = \DVtG = \C[E_0,\dots,E_r] = \bigoplus_{\tgamma \in
    \tGamma} \C E_\tgamma(x,\dx)$ is a commutative
  polynomial ring.
\end{thm}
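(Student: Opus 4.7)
The plan is to split the statement into three parts: the weight decomposition $\tD = \bigoplus_{\tgamma \in \tGamma} \C E_\tgamma(x,\dx)$, commutativity, and the polynomial ring structure. The first part is essentially a Schur-type bookkeeping, the second uses faithfulness of the action on $\CV$, and the third requires a symbol argument.

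First I would establish the decomposition. The $\tG$-equivariant isomorphism $\msf : S(V^*) \otimes S(V) \isomto \DV$ reduces the computation to invariants in $S(V^*) \otimes S(V)$. The $\tG$-invariant pairing $\ascal{\cdot}{\cdot} : S(V) \otimes S(V^*) \to \C$ identifies $Y(\tmu) \cong V(\tmu)^*$, so $\tGVd$ is MF with the same weights. Taking $\tG$-invariants in the tensor decomposition,
$$\DV^{\tG} \;\cong\; \bigoplus_{\tgamma,\tmu \in \tGamma} [V(\tgamma) \otimes V(\tmu)^*]^{\tG} \;\cong\; \bigoplus_{\tgamma,\tmu} \Hom_{\tG}(V(\tmu),V(\tgamma)),$$
and Schur's lemma collapses this to $\bigoplus_\tgamma \C E_\tgamma(x,\dx)$, since each $V(\tgamma)$ occurs with multiplicity one.

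Next I would prove commutativity by transport to $\End_\C(\CV)$. Because $\DV$ acts faithfully on $\CV$, the restriction $\DV^{\tG} \to \End_\C(\CV)^{\tG}$ is injective. Multiplicity freeness of $\CV$ forces every $\tG$-equivariant endomorphism to preserve each $V(\tgamma)$ and to act there by a scalar (Schur), so $\End_\C(\CV)^{\tG} = \prod_\tgamma \C \, \mathrm{id}_{V(\tgamma)}$ is commutative; commutativity of $\tD$ follows. Concretely this realises $E_\tgamma$ as a scalar operator $c_\tgamma(\tmu)$ on each isotypic piece $V(\tmu)$, which will be useful below.

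For the polynomial ring assertion I would use the order filtration on $\DV$ and pass to $\gr \DV = \C[V^* \oplus V]$. The principal symbols $\sigma(E_\tgamma)$ lie in the one-dimensional piece $[V(\tgamma) \otimes Y(\tgamma)]^{\tG}$ of bidegree $(d(\tgamma),d(\tgamma))$, and since $\gr \tD$ is commutative, $\sigma(E_{\tlambda_0})^{n_0} \cdots \sigma(E_{\tlambda_r})^{n_r}$ is again $\tG$-invariant and lies in the piece of type $\sum n_j \tlambda_j$. Provided this product is nonzero, it must be a nonzero scalar multiple of $\sigma(E_{\sum n_j \tlambda_j})$. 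An induction on $d(\tgamma)$ then expresses each $E_\tgamma$ as a polynomial in $E_0,\dots,E_r$ modulo operators of strictly lower order, yielding surjectivity $\C[t_0,\dots,t_r] \twoheadrightarrow \tD$; and since $\tGamma = \boplus \N \tlambda_j$ is a free monoid, the leading symbols of the distinct monomials $E_0^{n_0}\cdots E_r^{n_r}$ lie in distinct summands of the direct sum of Step 1, forcing algebraic independence.

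The main obstacle is the nonvanishing assertion used in the induction, namely $\sigma(E_{\tlambda_0})^{n_0}\cdots \sigma(E_{\tlambda_r})^{n_r} \neq 0$ in the one-dimensional isotypic component of weight $\tgamma = \sum n_j \tlambda_j$. Equivalently one must check that the eigenvalues $c_{\tlambda_j}(\tgamma)$ do not identically vanish on the sub-monoid of $\tGamma$ generated by $\tlambda_j$, or prove directly that the corresponding component of the product of generators of $[V(\tlambda_j)\otimes Y(\tlambda_j)]^{\tG}$ in $S(V^*\oplus V)^{\tG}$ is nonzero. I would handle this by exhibiting a single $\tmu \in \tGamma$ for which $\prod E_j^{n_j}$ acts as a nonzero scalar on $V(\tmu)$, for instance $\tmu = \tgamma + \tgamma_0$ with $\tgamma_0$ chosen large enough that all the relevant $\Delta^{\tlambda_j}$ act injectively by Lemma~\ref{lem44}(b), so that iterated application gives a nonzero scalar.
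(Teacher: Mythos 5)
First, a point of comparison: the paper does not prove Theorem~\ref{thm46} at all --- it is quoted from \cite[Theorem~9.1]{HU} and \cite[Corollary~7.4.4]{BR}, the only ingredient supplied in the text being the decomposition $\DVtG\cong\bigoplus_{\tgamma}[V(\tgamma)\otimes Y(\tgamma)]^{\tG}$ that you establish in your first step. That step, and your second step (commutativity via the faithful $\tG$-equivariant embedding $\DVtG\hookrightarrow\End_{\tG}(\CV)\cong\prod_{\tgamma}\C$ coming from multiplicity-freeness and Schur's lemma), are correct and are the standard arguments.

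The gap is in the third step. The product $\sigma(E_{\tlambda_0})^{n_0}\cdots\sigma(E_{\tlambda_r})^{n_r}$ does \emph{not} lie in the single summand $[V(\tgamma)\otimes Y(\tgamma)]^{\tG}$ with $\tgamma=\sum_j n_j\tlambda_j$: it lies in $[S^d(V^*)\otimes S^d(V)]^{\tG}=\bigoplus_{d(\tilde{\mu})=d}\C\,\sigma(E_{\tilde{\mu}})$ with $d=d(\tgamma)$, and in general several $\tilde{\mu}$ of the same degree occur, because $V(\tlambda_i)\cdot V(\tlambda_j)\subset S(V^*)$ contains constituents besides the Cartan component $V(\tlambda_i+\tlambda_j)$. (Already for $(\SO(n)\times\C^*:\C^n)$ the square of the degree-one invariant $\sigma(\Theta)=\dual{v^*}{v}$ has a nonzero component along $\sigma(E_0)=f\otimes f^*$ as well as along the harmonic piece.) Consequently your induction only shows that certain linear combinations $\sum_{d(\tilde{\mu})=d}c_{\tilde{\mu}}E_{\tilde{\mu}}$ are polynomials in $E_0,\dots,E_r$ modulo lower order; to isolate each $E_{\tilde{\mu}}$ you must invert the transition matrix between the monomials $\prod_j\sigma(E_{\tlambda_j})^{n_j}$ and the $\sigma(E_{\tilde{\mu}})$ in each degree. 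The standard route (as in \cite{BR}) is to order the weights by dominance, show this matrix is triangular with the Cartan components on the diagonal, and then show those diagonal entries are nonzero (using $h^{\tlambda_i}h^{\tlambda_j}=h^{\tlambda_i+\tlambda_j}\ne0$ and its analogue in $S(V)$). Your proposed patch does not supply this: exhibiting some $V(\tilde{\mu})$ on which $\prod_j E_j^{n_j}$ acts by a nonzero scalar only shows the operator is nonzero --- which is automatic, $\gr\DV$ being a domain --- and says nothing about which isotypic summands its symbol occupies; moreover the appeal to Lemma~\ref{lem44}(b) conflates the Capelli operators $E_j$ for $j>m$ with the operators $\Delta^{\tlambda}$, which that lemma controls only for $\tlambda\in\Gamma_0$. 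The same defect undermines the algebraic-independence argument as stated (the leading symbols of distinct monomials do \emph{not} lie in distinct summands); it too needs either the triangularity argument or the eigenvalue polynomials $b_{\tgamma}$ of Proposition~\ref{prop47}.
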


Notice for further use the following property of the Capelli
operators, see \cite[Corollary~4.4]{Kn1} or
\cite[Proposition~8.3.2]{BR}:

\begin{prop}
  \label{prop47}
  Set $\fa^*= \C \otimes_\Z \Z\tGamma = \boplus_{i=0}^r\C
  \tlambda_i$, $\fa= \boplus_{i=0}^r\C a_i$ where
  $\{a_i\}_i$ is the dual basis of $\{\tlambda_i\}_i$. For
  each $\tgamma \in \tGamma$ there exists a polynomial
  function $b_\tgamma = b_\tgamma(a_0,\dots,a_r) \in
  \C[\fa^*] = S(\fa)=\C[a_0,\dots,a_r]$ such that
  $$
  E_\tgamma(x,\dx)(h^\tlambda) =
  b_\tgamma(\tlambda)h^\tlambda \ \; \text{for all $\tlambda
    \in \fa^*$}.
  $$
\end{prop}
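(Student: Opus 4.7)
The plan is to first invoke Schur's lemma to define the scalar $b_\tgamma(\tlambda)$ for $\tlambda \in \tGamma$, and then show that this scalar depends polynomially on the coordinates of $\tlambda$ in the basis $\{\tlambda_i\}$. Since $V(\tlambda)$ appears with multiplicity one in $\CV$ (hence in its homogeneous component $S^{d(\tlambda)}(V^*)$), and since $E_\tgamma \in \DVtG$ is $\tG$-equivariant and preserves the homogeneous degree, the restriction $E_\tgamma|_{V(\tlambda)}$ is a $\tG$-endomorphism of the simple $\tG$-module $V(\tlambda)$. By Schur's lemma it must be a scalar $b_\tgamma(\tlambda) \in \C$, so that $E_\tgamma(h^\tlambda) = b_\tgamma(\tlambda)\, h^\tlambda$, giving a well defined function $b_\tgamma \colon \tGamma \to \C$.

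Next I would pass to formal powers. With formal parameters $t = (t_0, \dots, t_r)$, introduce the symbol $h^t = h_0^{t_0}\cdots h_r^{t_r}$ on which differential operators act via the formal rule $\partial_j(h_i^{t_i}) = t_i\, h_i^{t_i - 1}\partial_j(h_i)$. The first order identity
\[
\partial_j(h^t) = \Bigl(\sum_{i=0}^r t_i\, h_i^{-1}\partial_j(h_i)\Bigr) h^t
\]
together with an induction on $|\alpha|$ shows that for any monomial $\partial^\alpha \in S(V)$ the quotient $h^{-t}\partial^\alpha(h^t)$ is a polynomial in $t$ of degree $|\alpha|$ with coefficients in the rational function field $\C(V)$. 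Expanding $E_\tgamma = \sum_\alpha \vphi_\alpha \partial^\alpha$ then yields a single polynomial
\[
P(t) \in \C(V)[t_0, \dots, t_r], \qquad E_\tgamma(h^t) = P(t)\, h^t,
\]
of total degree at most the order $d(\tgamma)$ of $E_\tgamma$.

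The last step is to upgrade the coefficients of $P(t)$ from $\C(V)$ to $\C$. For each $\tlambda = \sum m_i\tlambda_i \in \tGamma$, specializing $t = (m_0, \dots, m_r)$ yields the identity $P(m_0, \dots, m_r)\, h^\tlambda = b_\tgamma(\tlambda)\, h^\tlambda$, so the rational function $P(m_0, \dots, m_r) \in \C(V)$ is in fact the constant $b_\tgamma(\tlambda)$. Identifying $\fa^* = \bigoplus_i \C\tlambda_i$ with $\C^{r+1}$ via the basis $\{\tlambda_i\}$, the subset $\tGamma$ corresponds to $\N^{r+1}$, which is Zariski dense in $\C^{r+1}$. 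Writing $P(t) = \sum_\alpha c_\alpha(x)\, t^\alpha$ with $c_\alpha \in \C(V)$ and finitely many indices $\alpha$, one selects enough tuples $\tlambda^{(1)}, \dots, \tlambda^{(M)} \in \N^{r+1}$ for which the matrix $[(\tlambda^{(i)})^\alpha]$ is invertible over $\C$ (a Vandermonde-type choice); inverting this matrix expresses each $c_\alpha$ as a $\C$-linear combination of the scalars $b_\tgamma(\tlambda^{(i)})$, forcing $c_\alpha \in \C$. Thus $P(t) \in \C[t_0, \dots, t_r] = S(\fa)$ provides the desired polynomial $b_\tgamma = b_\tgamma(a_0, \dots, a_r)$. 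The main obstacle is precisely this final density-plus-Vandermonde step that turns a priori rational-function coefficients into genuine complex scalars; the earlier steps are straightforward invariant-theoretic and Leibniz-rule bookkeeping.
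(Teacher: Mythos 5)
Your argument is correct. Note that the paper itself gives no proof of this proposition: it quotes the result from the literature (Knop, Corollary~4.4, and Benson--Ratcliff, Proposition~8.3.2), so there is no internal proof to compare against; your write-up is in effect a self-contained proof of a cited fact, and it follows the standard line of those references. The three ingredients are all sound: (i) Schur's lemma applies because $E_\tgamma$ is $\tG$-equivariant and the decomposition of $\CV$ is multiplicity free, so $E_\tgamma$ preserves each isotypic component $V(\tlambda)$ and acts there by a scalar (degree preservation is not even needed for this, though it also holds); (ii) the Leibniz induction showing $h^{-t}\partial^\alpha(h^t)$ is polynomial in $t$ of degree at most $|\alpha|$ with coefficients in $\C[V][h_0^{-1},\dots,h_r^{-1}]\subset\C(V)$ is routine and correct, and the formal rule specializes to the genuine derivative when $t\in\N^{r+1}$, so $P(m)=b_\tgamma(\tlambda)$ for $\tlambda=\sum_i m_i\tlambda_i\in\tGamma$; (iii) the interpolation step is valid --- since a polynomial in $r+1$ variables vanishing on $\N^{r+1}$ is zero, one can choose finitely many points of $\N^{r+1}$ making the evaluation matrix invertible, whence each coefficient $c_\alpha$ is a $\C$-linear combination of scalars and hence lies in $\C$. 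This formal-power interpretation is also the right way to make sense of the statement ``for all $\tlambda\in\fa^*$''. The only presentational quibble is the first-order formula, where $\partial_j(h_i^{t_i})=t_i h_i^{t_i-1}\partial_j(h_i)$ should be read as a definition of the formal action rather than a consequence; as written your proof already treats it that way.
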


\begin{rems}
  \label{rem48}
  (1) Suppose that $\tGV$ is irreducible. Then we can assume
  that $V = V(\tgamma_r)$. If $\dim V= N$ we have
  $E_r=\Ej{r} =\bTheta= \frac{1}{N} \Theta$ where $\Theta$
  is the Euler vector field.
  
  (2) If $j \in \{0,\dots,m\}$ we may take $E_j =
  f_j\Delta_j$. Recall that $f_j = h^{\tlambda_j}$ and
  $\Delta_j = \partial(f_j^*)$. By Theorem~\ref{thm31} there
  exists $b_j(s) \in \C[s]$ such that $\Delta_j(f_j^m) =
  b_j^*(m) f_j^{m-1}$; thus $E_j(f_j^m) = b_j^*(m) f_j^m$.
  This shows that $b_{\tlambda_j}(s,0,\dots,0) = b_j^*(s)$.
  
  (3) Let $D \in \tD$; then $D(V(\tlambda)) =
  U(\tfg).D(h^{\tlambda})$ is either $(0)$ or equal to
  $\Vtlambda$. Indeed, the $\tG$-invariance of $D$ implies
  that $g.D(h^{\tlambda}) = D(g.h^{\tlambda})=
  \tlambda(g)D(h^{\tlambda})$ for all $g \in \tT U$, where
  we have considered here $\tlambda$ as a character of the
  Borel subgroup $\tT U$ of $\tG$; thus $D(h^{\tlambda}) \in
  \C h^{\tlambda}$ is either $0$ or a highest weight vector
  of $V(\tlambda)$.
\end{rems}

\subsection{MF representations with a one dimensional
  quotient}
\label{ssec42}

In this subsection we will work under the following
hypothesis:

\begin{mainhyp}
  \label{hypB}
  $\tGV$ is a multiplicity free representation which
  satisfies Hypothesis~\ref{hypA}.
\end{mainhyp}

In the notation of \S\ref{ssec41}, this condition means that
$m=0$, i.e.~$\Gamma_0 = \N \tlambda_0$. Set $f= f_0$, $n=
d(\tlambda_0)$, $\Delta = \Delta_0 = \partial(f^*)$, then we
have $f \in S^n(V^*)$, $V(\tlambda_0) = \C f$,
$Y(\tlambda_0) = \C \Delta$ and:
$$
\CVG = \C[f], \quad \SVG = \C[\Delta]
$$
(see Lemma~\ref{lem42} and~\eqref{eq43}). By
Remark~\ref{rem48}(2) we have $E_0 = f \Delta$,
$b_{\tlambda_j}(s,0,\dots,0) = b^*(s) = b(s-1) $ where
$b(s)$ is the $b$-function of $f$. Recall from~\eqref{eq36}
the following notation:
$$
\D = \DVG \supset \calD(V)^\tG = \tD.
$$
Recall also that $(G : V)$ is polar and that we have studied
in \S\ref{ssec32} the image of radial component map $\rad :
\DVG \to \DhmodW = \C[z,\dz]$. We now want to describe $J=
\Ker(\rad)$.

\begin{lem}
  \label{lem49}
  Let $P \in \tD$. Then there exists a polynomial $b_P(s)
  \in \C[s]$
  such that
  $$
  P(f^m) = b_P(m) f^m, \quad \rad(P) = b_P(\theta), \quad P
  - b_P(\bTheta) \in J=\Ker(\rad).
  $$
\end{lem}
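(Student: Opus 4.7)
The plan is to combine three ingredients: $f^m$ spans the one–dimensional weight space $V(m\tlambda_0)$, the elements of $\tD$ preserve this line and act by a scalar that is polynomial in $m$, and $\rad$ is a ring homomorphism sending $f \mapsto z$ and $\bTheta \mapsto \theta$.

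First I would invoke Remark~\ref{rem48}(3): since $P \in \tD$ is $\tG$–invariant, it preserves the highest weight line $\C h^{\tlambda} = \C f^m$ (for $\tlambda = m\tlambda_0$), so there is a scalar $c_P(m) \in \C$ with $P(f^m) = c_P(m) f^m$. To see that $c_P(m)$ is polynomial in $m$, I write $P$ as a polynomial in the normalized Capelli operators $E_0,\dots,E_r$ (which is possible by Theorem~\ref{thm46}). By Proposition~\ref{prop47}, $E_j(h^{\tlambda}) = b_{\tlambda_j}(\tlambda)\,h^{\tlambda}$, hence $E_j(f^m) = b_{\tlambda_j}(m\tlambda_0)\,f^m$, and $b_{\tlambda_j}(m\tlambda_0)$ is a polynomial in $m$. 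Since $\tD$ is commutative, $P(f^m)$ is obtained by evaluating the same polynomial combination on these scalars, producing a polynomial $b_P(s) \in \C[s]$ with $P(f^m) = b_P(m) f^m$ for all $m \in \N$.

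Next I would deduce $\rad(P) = b_P(\theta)$ directly from the definition of $\rad$. Since $\psi(f^m) = z^m$, we have
\[
\rad(P)(z^m) \;=\; \psi(P(f^m)) \;=\; \psi(b_P(m) f^m) \;=\; b_P(m)\, z^m,
\]
while $b_P(\theta)(z^m) = b_P(m) z^m$ because $\theta = z\dz$ acts by $m$ on $z^m$. The two elements $\rad(P)$ and $b_P(\theta)$ of $\C[z,\dz]$ therefore agree on the basis $\{z^m\}_{m \in \N}$ of $\C[z]$, and since a differential operator is determined by its action on polynomials, they are equal.

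Finally, the third assertion is a formal consequence: $\rad$ is a $\C$–algebra homomorphism and $\rad(\bTheta) = \theta$, so
\[
\rad\bigl(b_P(\bTheta)\bigr) \;=\; b_P(\rad(\bTheta)) \;=\; b_P(\theta) \;=\; \rad(P),
\]
which gives $P - b_P(\bTheta) \in \Ker(\rad) = J$. There is no serious obstacle here; the only point requiring care is the polynomiality of $c_P(m)$ in $m$, which is handled by reducing to the generators $E_j$ and applying Proposition~\ref{prop47}.
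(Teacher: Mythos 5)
Your proof is correct and follows essentially the same route as the paper: both reduce polynomiality of the eigenvalue to Theorem~\ref{thm46} and Proposition~\ref{prop47} (the paper expands $P$ in the basis $\{E_\tgamma(x,\dx)\}$ of $\tD$ rather than as a polynomial in $E_0,\dots,E_r$, which is an immaterial difference), and then identify $\rad(P)$ with $b_P(\theta)$ from its action on the $z^m$ — where the paper cites Lemma~\ref{lem21} and you use faithfulness of the Weyl algebra acting on $\C[z]$. No gaps.
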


\begin{proof}
  Write $P= \sum_{\gamma \in \tGamma} p_\gamma
  \Ej{\tgamma}$, cf.~Theorem~\ref{thm46}, and define a
  polynomial function by $b_P(s) = \sum_{\tgamma \in
    \tGamma} p_\tgamma b_\tgamma(s,0,\dots,0)$, where
  $b_\tgamma \in S(\fa)$ is as in Proposition~\ref{prop47}.
  Since $f^m = h^{m\tlambda_0}$ we obtain that $P(f^m) =
  b_P(m) f^m$.  It follows that $\rad(P)(z^m) = b_P(m) z^m$
  for all $m \in \N$ and Lemma~\ref{lem21} yields $\rad(P)=
  b_P(\theta)$. Since $\rad(\bTheta) = \theta$ we have
  $\rad(P - b_P(\bTheta)) =0$.
\end{proof}

Notice that $\bTheta \in \tD$; for $j \in \{0,\dots,r\}$ we
set
\begin{equation}
  \label{eq45}
  \Omega_j = E_j - b_{E_j}(\bTheta) \in J \cap \tD. 
\end{equation}
Thus we have:
$$
\tD = \C[E_0,\dots,E_r] =
\C[\Omega_0,\Omega_1,\dots,\Omega_r,\bTheta].
$$
Recall that for $j=0$ one has $E_0 = f\Delta$, hence
$b_{E_0}(s) = b^*(s)$ where $b(s)$ is the $b$-function of
$f$. Thus $\Omega_0 = f\Delta - b^*(\bTheta)$; observe that
we have already shown in \S\ref{ssec32} that $\rad(f\Delta -
b^*(\bTheta)) = z\delta - b^*(\theta) = 0$.

When $V$ is irreducible we adopt the notation of
Remark~\ref{rem48}(1) and we obtain $E_r= \bTheta$,
$b_{E_r}(s) = s$, thus $\Omega_r =0$.  Therefore in this
case one has
\begin{equation}
  \label{eq46}
  \tD = \C[\bTheta,\Omega_0,\dots,\Omega_{r-1}].
\end{equation}

The next result gives a description of $\Ker(\rad)$ and
another proof of Theorem~\ref{thm38} in the case of MF
representations. When $\tGV = (\GL(n) : S^2\C^n)$, part~(i)
of Theorem~\ref{thm410} is proved in
\cite[Proposition~2.1]{Mu4}.

\begin{thm}
  \label{thm410}
  The following properties hold.

  \noindent {\rm (i)} $\D= \tD[f,\Delta] =
  \C[E_1,\dots,E_r][f,\Delta] =
  \C[\Omega_1,\dots,\Omega_r][f,\Delta,\bTheta]$.

  \noindent {\rm (ii)} $\D= \bigl(\bigoplus_{p \in \N} \tD
  f^p\bigr) \boplus \bigl(\bigoplus_{p \in \N^*}
  \tD\Delta^{p}\bigr)$.

  \noindent {\rm (iii)} For $k \in \Z$, set
  $$
  \D[k]=
  \begin{cases}
    \tD f^k \ &\text{if $k \ge 0$,} \\
    \tD \Delta^{-k} \ &\text{if $k < 0$;}
  \end{cases}
$$
then $\D[k] = f^k\tD$, ik $k \ge 0$, or $\Delta^{-k}\tD$, if
$k < 0$.

\noindent {\rm (iv)} $R = \rad(\D) = U =
\C[z,\delta,\theta]$.

\noindent {\rm (v)} $J = \Ker(\rad) = \sum_{i=0}^r \D
\Omega_i = \sum_{i=0}^r \Omega_i\D$.
\end{thm}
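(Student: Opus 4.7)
The strategy is to prove (i)--(v) in order, with the structural decomposition of (i) driving the rest. For \textbf{(i)--(iii)}, combine the $\tG$-equivariant isomorphism $\msf:S(V^*)\otimes S(V)\isomto\DV$ with the $G$-module isomorphisms $S(V^*)\cong H(V^*)\otimes\C[f]$ (Lemma~\ref{lem43}) and $S(V)\cong H(V)\otimes\C[\Delta]$ (Remark~\ref{rem46}); taking $G$-invariants and invoking Lemma~\ref{lem42} (the bijection $p:\Gamma\to p(\Gamma)$) yields a vector-space decomposition
\[
\D\;=\;\bigoplus_{\gamma\in\Gamma,\,k_1,k_2\in\N}(V(\gamma+k_1\tlambda_0)\otimes Y(\gamma+k_2\tlambda_0))^G
\]
with each summand one-dimensional. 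A principal-symbol calculation identifies $f^{k_1}\Ej{\gamma}\Delta^{k_2}$ as a nonzero basis element of the $(\gamma,k_1,k_2)$ summand; since each lies in $\tD[f,\Delta]$, this proves (i), the remaining descriptions following from Theorem~\ref{thm46}, the identity $E_0=f\Delta$, and $\bTheta\in\tD$. The $\ad(\Theta)$-grading~\eqref{eq37} then gives (ii) and (iii): the above summand has $\Theta$-weight $n(k_1-k_2)$, so $\D[p]=0$ unless $n\mid p$; since the $C$-characters of $f^{k_2}$ and $\Delta^{k_2}$ cancel, $f^{k_2}\Ej{\gamma}\Delta^{k_2}\in\tD$, and rewriting $f^{p+k_2}\Ej{\gamma}\Delta^{k_2}=f^p\cdot(f^{k_2}\Ej{\gamma}\Delta^{k_2})$ yields $\D[np]=f^p\tD$ for $p\ge0$; the analogous manipulation with $\Ej{\gamma}\Delta^{k_2}f^{k_2}$ gives $\D[np]=\tD f^p$, with $p<0$ symmetric.

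For \textbf{(iv)}, applying $\rad$ to $\D=\tD[f,\Delta]$ and using Lemma~\ref{lem49} together with $\rad(\bTheta)=\theta$ to obtain $\rad(\tD)=\C[\theta]$ yields $R=\C[\theta][z,\delta]=U$.

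For \textbf{(v)}, the inclusions $\sum\D\Omega_i,\sum\Omega_i\D\subseteq J$ are immediate from $\Omega_i\in J\cap\tD$. The crucial intermediate step is $J\cap\tD=\sum_i\tD\Omega_i$: modulo the right-hand side each $E_i$ reduces to $b_{E_i}(\bTheta)$, so the quotient is generated by the class of $\bTheta$ and maps to $\C[\theta]$ via $\bTheta\mapsto\theta$; since $\theta$ is transcendental this map is injective, forcing $\ker(\rad|_{\tD})=\sum_i\tD\Omega_i$. Given $D\in J$, decompose via (ii) as $D=\sum_{p\ge0}P_pf^p+\sum_{p\ge1}Q_p\Delta^p$ with $P_p,Q_p\in\tD$; applying $\rad$ and separating by $\theta$-weight in $U$ forces $b_{P_p}=b_{Q_p}=0$, so $P_p,Q_p\in\sum_i\tD\Omega_i$. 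Writing $P_p=\sum_i P_{p,i}\Omega_i$ with $P_{p,i}\in\tD$ and using commutativity of $\tD$ ($P_{p,i}\Omega_i=\Omega_iP_{p,i}$) gives $P_pf^p=\sum_i\Omega_i(P_{p,i}f^p)\in\sum_i\Omega_i\D$; handling $Q_p\Delta^p$ identically proves $J=\sum_i\Omega_i\D$. For the other normal form, apply the $\C$-antilinear anti-automorphism $\phi$ of Lemma~\ref{lem34} and Proposition~\ref{prop35}, which preserves both $J$ and $\tD$: from $\phi(J)=J$ we get $J=\sum_i\D\phi(\Omega_i)$, and $\phi(\Omega_i)\in J\cap\tD=\sum_j\tD\Omega_j$ yields $J\subseteq\sum_j\D\Omega_j$.

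The main obstacle is part~(v): extracting $J\cap\tD=\sum_i\tD\Omega_i$ and then bootstrapping to all of $J$. The decomposition (ii) reduces the problem to scalar polynomial identities in $\C[\theta]$, and commutativity of $\tD$ combined with the anti-involution $\phi$ delivers both normal forms of the ideal.
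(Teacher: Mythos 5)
Your proposal is correct and follows essentially the same route as the paper: the structural decomposition of $\D$ via the multiplicity-free isotypic pieces and a principal-symbol/filtration argument for (i)--(iii), then applying $\rad$ and extracting $\theta$-weight components to identify $J$. The only cosmetic differences are that the paper organizes (ii)--(iii) via $C$-weight spaces rather than $\Theta$-weights, and obtains the second normal form $\sum_i\D\Omega_i$ in (v) directly from $\D[k]=f^k\tD=\tD f^k$ instead of via the anti-involution $\phi$; both variants are valid.
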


\begin{proof}
  Endow $\DV$, $\D$ and $\tD$ with the ``Bernstein
  filtration'', i.e.:
  $$
  \calF_p\DV = \sum_{i + j \le p} S^i(V^*)S^j(V), \quad
  \calF_p\D= \calF_p\DV \cap \D \supset \calF_p\tD=
  \calF_p\DV \cap \tD.
  $$
  Then, since $\tG$ and $G$ are reductive,
  $$
  \bbtS= \bigl[\gr_\calF \DV\bigr]^\tG = \bigl[S(V^*)
  \otimes S(V)\bigr]^\tG, \quad \bbS= \bigl[\gr_\calF
  \DV\bigr]^G = \bigl[S(V^*) \otimes S(V)\bigr]^G.
  $$
  Denote by $\sigma_j = \gr_{\calF}(E_j) \in
  \bigl[V(\tlambda_j) \otimes Y(\tlambda_j)\bigr]^\tG$ the
  principal symbol of $E_j$ for $\calF$. Then $\bbtS=
  \C[\sigma_0,\dots,\sigma_r]$, see for example~\cite{BR}.
  Recall that $E_0 = f\Delta$, hence $\sigma_0= ff^*$.  By
  Lemma~\ref{lem43} and Remark~\ref{rem46} we know that
  $S(V^*) = H(V^*) \otimes \C[f]$, $S(V) = H(V) \otimes
  \C[f^*]$, hence $\bbS= [H(V^*) \otimes H(V)]^G \otimes
  \C[f,f^*]$ (as vector spaces). Let $\gamma,\lambda \in
  \Gamma$; recall that the $G$-module $V(\gamma)$ is
  isomorphic to $E(p(\gamma))$ and that $p(\gamma) =
  p(\lambda)$ if and only if $\gamma = \lambda$,
  cf.~Lemma~\ref{lem42}. It follows that $[V(\gamma) \otimes
  Y(\gamma)]^G = [V(\gamma) \otimes Y(\gamma)]^\tG = \C
  E_\gamma$ and
  $$
  [H(V^*) \otimes H(V)]^G = \bigoplus_{\gamma \in \Gamma}\C
  E_\gamma \subset \bbtS =\C[\sigma_0,\dots,\sigma_r].
  $$
  Thus:
  $$
  \bbtS[f,f^*] \subset \bbS= [H(V^*) \otimes H(V)]^G \otimes
  \C[f,f^*] \subset \bbtS[f,f^*].
  $$
  Since the centre $C$ of $\tG$ acts trivially on $\bbtS$
  and via $\chi^j$, resp.~$\chi^{-j}$, on $f^j$,
  resp.~$(f^*)^j$, we obtain:
  \begin{equation*}
    \tag{$\star$}
    \bbS =  \bbtS[f,f^*] =\Bigl(\bigoplus_{j \ge 0 } \bbtS
    f^i\Bigr) 
    \boplus \Bigl(\bigoplus_{i >0} \bbtS (f^*)^i\Bigr) =
    \C[\sigma_1,\dots,\sigma_r] \otimes \C[f,f^*]. 
  \end{equation*}
  Then, by a filtration argument, one deduces that $\D=
  \sum_p \tD f^p + \sum_p \tD \Delta^p = \tD[f,\Delta] =
  \C[E_1,\dots,E_r][f,\Delta]=
  \C[\Omega_1,\dots,\Omega_r][f,\Delta,\bTheta]$ (recall
  that $\Omega_j = E_j - b_{E_j}(\bTheta)$). This proves
  (i).

  Observe that $\tD f^p$ and $f^p\tD$, resp.~$\tD\Delta^p$
  and $\Delta^p \tD$, are contained in the $\chi^p$-weight
  space, resp.~$\chi^{-p}$-weight space, for the action of
  $C$ on $\D$. This implies easily, as in~($\star$), that
  these one dimensional subspaces are equal to the
  corresponding weight spaces. This proves (ii) and (iii).

  Since $\Omega_i \in J = \Ker(\rad)$, we obtain $\rad(\D) =
  \C[\rad(f),\rad(\Delta), \rad(\bTheta)] =
  \C[z,\delta,\theta]$, hence (iv).

  (v) Let $P \in \D$ and write $P = \sum_k P_k$, $P_k \in
  \D[k]$ with $P_k = Q_k f^k$ or $Q'_k \Delta^{-k}$ and
  $Q_k, Q'_k \in \tD = \C[\Omega_0,\dots,\Omega_r,\bTheta]$.
  Set:
$$
Q_p = \sum_{i \ge 0} Q_{p,i}\bTheta^i, \quad Q_p = \sum_{i
  \ge 0} Q'_{p,i}\bTheta^i
$$
where $Q_{p,i}, Q'_{p,i} \in
\C[\Omega_0,\dots,\Omega_r]$. Since $Q_{p,i} \in Q_{p,i}(0)
+ \sum_j \Omega_j \tD$, $Q'_{p,i} \in Q'_{p,i}(0) + \sum_j
\Omega_j \tD$, $Q_{p,i}(0), Q'_{p,i} (0) \in \C$, we obtain
by applying $\rad$:
$$
\rad(P) = \sum_{k \ge 0}\Bigl(\sum_{i \ge 0} Q_{k,i}(0)
\theta^i\Bigr) z^k + \sum_{p > 0}\Bigl(\sum_{i \ge 0}
Q'_{p,i}(0) \theta^i\Bigr) \delta^p.
$$
Recall from \S\ref{ssec21} that there exists a filtration on
$R$ such that $\gr(R)$ is isomorphic to $\C[X,Y,S]/(XY -
S^n)$ where $\gr(z) \equiv X, \gr(\delta) \equiv Y,
\gr(\theta) \equiv S$ (up to some scalar). This implies
easily that $R= \bigl(\bigoplus_{k \ge 0} \C[\theta]
z^k\bigr) \boplus \bigl(\bigoplus_{p > 0} \C[\theta]
\delta^p\bigr)$. Now suppose that $P \in J$, then $\rad(P) =
0 = \sum_{k \ge 0}\bigl(\sum_{i \ge 0} Q_{k,i}(0)
\theta^i\bigr) z^k + \sum_{p > 0}\bigl(\sum_{i \ge 0}
Q'_{p,i}(0) \theta^i\bigr) \delta^p$ forces $\sum_{i \ge 0}
Q_{k,i}(0) \theta^i = \sum_{i \ge 0} Q'_{p,i}(0) \theta^i =
0$, hence $Q_{k,i}(0) = Q'_{p,i}(0) = 0$ for all
$k,p,i$. This shows that $Q_k, Q'_p \in \sum_{i=0}^r
\Omega_0\tD + \cdots + \Omega_r \tD$ and therefore $P \in
\sum_{i=0}^r \Omega_i \D$. Writing $P_k \in f^k \tD$ or
$\Delta^{-k} \tD$ yields $P \in \sum_{i=0}^r \tD \Omega_0 +
\cdots + \tD \Omega_r$.
\end{proof}

\begin{rem}
  \label{rem411}
  (1) In the case where $\tGV$ is irreducible we have
  noticed that $\Omega_r = 0$, see~\eqref{eq46}, thus
  $$
  J = \sum_{i=0}^{r-1} \D \Omega_i = \sum_{i=0}^{r-1}
  \Omega_i\D.
  $$
  
  (2) From $[\bTheta,f^k] = kf^k$ and $[\bTheta,\Delta^k] =
  -k \Delta^k $ we can deduce that
  $$
  \D[k] = \{D \in \D : [\bTheta,D] = kD\}.
  $$
\end{rem}

\subsection{A Howe duality}
\label{ssec43}
The Howe duality for the Weil representation gives a
bijection between irreducible finite dimensional
representations of $\SO(n)$ and irreducible lowest weight
$U(\fsl(2))$-modules. Algebraically, this result corresponds
to the case $(\tG = \SO(n) \times \C^* : V = \C^n)$: here
the Lie subalgebra of $\DV$ generated by $f$ (quadratic
form) and $\Delta$ (Laplacian) is isomorphic to $\fsl(2)$.
More precisely we have the following result.  Let $\calA
\cong \fsl(2)$ be this Lie algebra, denote by $H_d \subset
H(V^*)$ the space of harmonic polynomials of degree $d$,
i.e.:
$$
H_d = \{q \in S^d(V^*) : \Delta(q) = 0\}.
$$
Each $H_d$ is an irreducible $\SO(n)$-module and the $\calA
\times \SO(n)$-module $\CV$ decomposes as
$$
\CV = \bigoplus_d X(d+n/2) \otimes H_d
$$
where $X(d+n/2)$ is the irreducible lowest $\fsl(2)$-module
of lowest weight $d+n/2$.

This kind of duality has been extended by H.~Rubenthaler
\cite{Rub1} to a class of PHV, the so-called {\em
  commutative parabolic} PHV. They are associated to short
gradings $\fs = \fs_{-1} \oplus \fs_0 \oplus \fs_1$ on
simple Lie algebras.  The commutative parabolic PHV are
irreducible, MF and satisfy $\dim V \qmod G =1$, thus
Hypothesis~\ref{hypB} holds.  We want to generalize the Howe
duality to the more general class of representations $\tGV$
satisfying only Hypothesis~\ref{hypB}. We therefore fix a
representation $\tGV$ satisfying this hypothesis,
see~\S\ref{ssec42}. We have indicated in the last column of
the table of Appendix~\ref{A1} the irreducible MF
representations $\tGV$ which are of commutative parabolic
type.

Let
\begin{equation}
  \label{eq47}
  \calA = \Lie\langle f, \Delta\rangle \subset \bigl(\DV,
  [\phantom{s},\phantom{s}]\bigr)
\end{equation}
be the Lie subalgebra generated by $f,\Delta$.  Notice that
$\calA \subset \D$. Let $\tgamma = \sum_{j=0}^r a_j
\tlambda_j$. Recall that $\Vtgamma = U(\g).h^\tgamma$; we
then set:
$$
\asf = (a_0,\dots,a_r), \quad h^\tgamma = h^\asf =
f^{a_0}h_1^{a_1}\cdots h_r^{a_r}, \quad \Vtgamma = V(\asf) =
V(a_0,\dots,a_r).
$$
Let $b \in \N$, by Lemma~\ref{lem44}(b), the operator
$\Delta^b$ acts as follows: $\Delta^b(V(\asf))= 0$ if $a_0 <
b$, $\Delta^b : V(a_0,\dots,a_r) \isomto V(a_0
-b,a_1,\dots,a_r)$ if $b \le a_0$, and in the latter case
$\Delta^b(h^\asf) \in \C^* f^{a_0-b}h_1^{a_1}\cdots
h_r^{a_r}$ is a highest weight vector in $V(a_0
-b,\dots,a_r)$. Obviously, the multiplication by $f^b$ gives
an isomorphism $f^b : V(a_0,\dots,a_r) \isomto V(a_0
+b,a_1,\dots,a_r)$ of $\tG$-modules. Define:
\begin{equation}
  \label{eq48}
  \calA_j = \{D \in \calA : D(V(a_0,\dots,a_r)) \subset
  V(a_0+j,a_1,\dots,a_r) \ \text{for all $\asf \in
    \N^{r+1}$}\}. 
\end{equation}
It is clear that $\bigoplus_{j \in \Z} \calA_j \subset
\calA$ and, by the previous remarks, $f^b \in \calA_b,
\Delta^b \in \calA_{-b}$.

\begin{remarks}
  1) It is difficult to compute in the Lie algebra $\calA$
  because $[\Delta,f] = \psi(-\bTheta) + Q$ for some $Q\in
  \Ker(\rad)$ which is not easy to calculate (recall that
  here $\psi(s) = b(-s) -b(-s-1)$, cf.~\eqref{eq27}). For
  example when $\tGV = (\GL(n) \times \SL(n) : \Mat_n(\C))$
  P.~Nang \cite{Nang3} has shown that (up to some scalar):
  $Q =
  \mathrm{trace}(\mathbf{x}^{\#}\boldsymbol{\partial}^{\#})$
  where $\mathbf{x}^{\#}$,
  resp.~$\boldsymbol{\partial}^{\#}$, is the adjoint matrix
  of $\mathbf{x}=[x_{ij}]_{ij}$,
  resp.~$\boldsymbol{\partial} = [\partial_{x_{ij}}]_{ij}$.
  (See also \cite[Proposition~7]{Nang5} for the case
  $(\GL(2m) : \Wedge^2\C^{2m})$.)
  
  \noindent 2) When $n = \deg f =2$ one has $\calA \cong
  \fsl(2)$, thus $\dim_\C \calA =3$.
\end{remarks}

The assertion~(2) of the next proposition should be compared
with \cite[Th\'e\-or\`e\-me~3.1]{Rub1}.

\begin{prop}
  \label{prop412} {\rm (1)} One has $\calA_k = \calA \cap
  \D[k]$, $\calA = \bigoplus_{k \in \Z} \calA_k$. The Lie
  subalgebra $\calA_0$ is abelian.

  \noindent {\rm (2)} Suppose $n \ge 3$, then $\dim_\C
  \calA_k = \infty$ for all $k \in \Z$.
\end{prop}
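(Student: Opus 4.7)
The plan for (1) is to deduce the decomposition $\calA = \bigoplus_k \calA_k$ from the $\bTheta$-grading $\D = \bigoplus_k \D[k]$ recorded in Remark~\ref{rem411}(2). First I would verify the generators: $f \in \calA_1$ because $f = h^{\tlambda_0}$ is $G$-invariant, so $f \cdot V(\asf) = f \cdot U(\g) h^\asf = U(\g)\, f h^\asf = V(a_0+1, a_1, \ldots, a_r)$; and $\Delta \in \calA_{-1}$ by Lemma~\ref{lem44}(b) applied with $\tlambda = \tlambda_0$. Next, any iterated Lie bracket of $f$ and $\Delta$ expands as a sum of associative monomials $u_1 \cdots u_p$ with each $u_i \in \{f, \Delta\}$, and such a monomial with $a$ factors $f$ and $b$ factors $\Delta$ carries $V(a_0, \ldots, a_r)$ into $V(a_0 + a - b, a_1, \ldots, a_r)$ irrespective of the ordering of the factors. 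Thus every iterated bracket lies in a single $\calA_j$, so $\calA = \sum_j \calA_j$; this sum is direct because $\calA_j \subset \D[j]$ and $\D = \bigoplus_j \D[j]$, whence also $\calA_k = \calA \cap \D[k]$. The commutativity of $\calA_0$ is then immediate: $\D[0] = \tD$ by Theorem~\ref{thm410}, and $\tD$ is commutative by Theorem~\ref{thm46}.

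For (2), the plan is to transport Proposition~\ref{prop282} through the radial component map $\rad \colon \D \to R$. As an algebra homomorphism $\rad$ is a Lie homomorphism, and $\rad(\calA)$ is the Lie subalgebra of $R$ generated by $\rad(f) = z$ and $\rad(\Delta) = \delta$. Proposition~\ref{prop29} supplies an isomorphism $\widetilde{U}/(\Omega) \isomto R$ sending the classes of $B$ and $A$ to $z$ and $\delta$; under this isomorphism $\rad(\calA)$ corresponds precisely to the Lie algebra $\calL$ of \S\ref{ssec22}, for which Proposition~\ref{prop282} asserts $\dim_\C \calL_k = \infty$ for every $k \in \Z$ when $n \ge 3$. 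To conclude I would check that $\rad$ is compatible with gradings: $\rad(\bTheta) = \theta$ together with $[\theta, z] = z$ and $[\theta, \delta] = -\delta$ from Proposition~\ref{prop28}(4) show that the intrinsic grading of $\calL$ (defined via the action on $M(0)$) coincides with its intersection with the $\ad(\theta)$-eigenspaces of $R$. Consequently $\rad$ restricts to maps $\calA_k \to \calL_k$, and these restrictions are surjective by a graded-lifting argument: lift any $u \in \calL_k$ to $D \in \calA$, decompose $D = \sum_j D_j$ with $D_j \in \calA_j$, and observe that $\rad(D_j) \in \calL_j$ together with $\calL = \bigoplus_j \calL_j$ forces $u = \rad(D_k)$. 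Therefore $\dim \calA_k \ge \dim \calL_k = \infty$.

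The main obstacle is not conceptual but organizational: three a priori distinct $\Z$-gradings on $\calA$ must be reconciled — the one defining $\calA_k$ via the action on $V(a_0, \ldots, a_r)$, the $\bTheta$-grading $\calA \cap \D[k]$, and the pullback via $\rad$ of the $\calL_k$-grading defined through the action on $M(0)$. Each identification is a short verification relying on Lemma~\ref{lem44}, Remark~\ref{rem411}(2), and Proposition~\ref{prop28} respectively, but the three must be aligned cleanly before Proposition~\ref{prop282} can be invoked. The analytic substance — exhibiting infinitely many linearly independent elements in each graded piece — is entirely absorbed by that rank-one result.
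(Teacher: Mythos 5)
Your argument is correct and follows the paper's own proof in all essentials: part (1) rests on the compatibility of the Lie-algebra grading with the decomposition $\D=\bigoplus_k\D[k]$ (the paper phrases this via $\ad(\bTheta)$-eigenspaces, you via monomial bookkeeping, which amounts to the same thing) together with the commutativity of $\tD$, and part (2) reduces to Proposition~\ref{prop282} by showing that $\rad$ maps $\calA_k$ onto $\calL_k$, exactly as in the paper. The only point to make explicit is the directness of $\sum_j\calL_j$ (needed for your graded-lifting step), which follows from the faithfulness of the action of $U\subset\C[z,\dz]$ on $\C[z]$.
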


\begin{proof}
  (1) Since $f, \Delta \in \calA$ the relations $[\bTheta,f]
  =f$ and $[\bTheta, \Delta] = -\Delta$ show that
  $\ad(\bTheta) : \D \to \D$ induces an endomorphism of
  $\calA$.  From $\calA \subset \D$ we deduce that $\calA =
  \bigoplus_{k \in \Z} \calA \cap \D[k]$. let $P \in \D[k]$,
  $P= f^k D$ or $\Delta^{-k}D$ be an element of $\calA \cap
  \D[k]$. Then, using Remark~\ref{rem48}(3), we see that $P
  \in \calA_k$. The desired equalities follow easily.  Since
  $\calA_0 \subset \tD$ and $\tD$ is a commutative algebra,
  cf.~Theorem~\ref{thm46}, $\calA_0$ is abelian.
  
  (2) We claim that $\rad(\calA_j) = \calL_j$, where
  $\calL_j$ is defined as in \S\ref{ssec22}, i.e.: $\calL =
  \Lie\langle f,\Delta \rangle$, $\calL_i = \{u \in \calL :
  u(z^{m}) \in \C z^{m+j} \ \text{for all $m \in \N$}\}$
  (with the convention that $\C z^{m+j} = (0)$ when $m+j <
  0$). Note first that $\rad(\calA) = \Lie\langle \rad(f),
  \rad(\Delta)\rangle= \calL$. Let $D \in \calA_j$, then
  $\rad(D)(z^m) = \psi(D(f^m))$. Observe that $f^m \in
  V(m,0,\dots,0)$, hence $D(f^m) \in V(m+j,0,\dots,0)$,
  which is $(0)$ is $m+j <0$ and $\C f^{m+j}$ if $m + j \ge
  0$. Thus $\rad(D) \in \C z^{m+j}$ and $\rad(\calA_j)
  \subset \calL_j$.  It follows that $\rad(\calA) = \sum_j
  \rad(\calA_j) \subset \boplus_j \calL_j \subset \calL =
  \rad(\calA)$. Therefore $\rad(\calA_j) = \calL_j$ for all
  $j$ (and $\calL = \boplus_j \calL_j$). Now,
  Proposition~\ref{prop28} yields the desired assertion.
\end{proof}

Set $\calA_{+} = \bigoplus_{k >0} \calA_k$, $\calA_{-} =
\bigoplus_{k < 0} \calA_k$. We then have a triangular
decomposition $\calA = \calA_- \boplus \calA_0 \boplus
\calA_+$ which enables us to introduce the notion of a
lowest weight $\calA$-module, see~\cite{Rub1}. As usual in
this situation the weights will be elements of the linear
dual space $\calA_0^*$ of the abelian Lie algebra $\calA_0$.

\begin{defn}
  \label{def413}
  The $\calA$-module $X$ is a lowest weight module if there
  exist $x \in X$ and $\vphi \in \calA_0^*$ such that: $X =
  U(\calA).x$, $\calA_-.x = 0$, $a.x = \vphi(a)x$ for all $a
  \in \calA_0$.
\end{defn}

The next theorem generalizes \cite[Proposition~4.2]{Rub1};
it gives a Howe duality for MF representations with a one
dimensional quotient (see also \cite[Corollary~4.5.17]{GW}).
Recall that $H(V^*)= \bigoplus_{\gamma \in \Gamma}
V(\gamma)$, where $V(\gamma) \cong E(p(\gamma))$, is equal
to the space of harmonic elements, i.e.~$\{\vphi \in \CV :
\Delta_0(\vphi)= \cdots = \Delta_m(\vphi)=0\}$,
cf.~\eqref{eq42} and Proposition~\ref{prop45}.

\begin{thm}
  \label{thm414}
  Let $\tGV$ satisfying Hypothesis~\ref{hypB}.  The $\calA
  \times \g$-module $\CV$ decomposes as $\CV \cong
  \bigoplus_{\gamma \in \Gamma} X(\gamma) \otimes
  E(p(\gamma))$, where $X(\gamma) = U(\calA).h^{\gamma}$ is
  an irreducible lowest weight $\calA$-module. Moreover:
  $X(\gamma) \cong X(\gamma') \iff \gamma = \gamma'$.
\end{thm}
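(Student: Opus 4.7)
The plan is to decompose $\CV$ by its $\tG$-module structure and then regroup via Hypothesis~\ref{hypB} to extract the $\calA$-module $X(\gamma)$ attached to each $G$-isotype. Under Hypothesis~\ref{hypB} we have $m=0$ and $\Gamma_0=\N\tlambda_0$, so every $\tgamma\in\tGamma$ writes uniquely as $\tgamma=k\tlambda_0+\gamma$ with $k\in\N$ and $\gamma\in\Gamma$; moreover $V(k\tlambda_0+\gamma)=f^k V(\gamma)$ since $h^{k\tlambda_0+\gamma}=f^k h^\gamma$. Grouping by $\gamma$ yields
$$
\CV=\bigoplus_{\gamma\in\Gamma}M_\gamma,\qquad M_\gamma=\bigoplus_{k\ge 0}f^k V(\gamma),
$$
and Lemma~\ref{lem42}(c) identifies $M_\gamma$ as the full $E(p(\gamma))$-isotypic component of $\CV$ as a $G$-module, with distinct $\gamma$'s producing distinct $G$-types.

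Next, since $\calA\subset\DVG$ commutes with the $G$-action, each $M_\gamma$ is $\calA$-stable, as is its subspace $M_\gamma^U=\bigoplus_{k\ge 0}\C f^k h^\gamma$ of $U$-highest-weight vectors. Because $f\in\calA_1$ gives $f^k h^\gamma\in U(\calA).h^\gamma$, one has $X(\gamma)=U(\calA).h^\gamma=M_\gamma^U$. The map
$$
X(\gamma)\otimes V(\gamma)\longisomto M_\gamma,\qquad (f^k h^\gamma)\otimes (g.h^\gamma)\mapsto f^k(g.h^\gamma)\ \ (g\in U(\g)),
$$
is well-defined (since $f$ is $G$-invariant, so $\g$ commutes with multiplication by $f^k$) and is $\calA\times\g$-equivariant because $\calA$ centralizes $G$ while $G$ acts only on the second factor. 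Summing over $\gamma$ will then give $\CV\cong\bigoplus_{\gamma\in\Gamma}X(\gamma)\otimes E(p(\gamma))$.

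To verify $h^\gamma$ is a lowest weight vector of $\calA$, I invoke Theorem~\ref{thm410}(iii): $\calA_0=\calA\cap\D[0]=\calA\cap\tD$ acts on $h^\gamma$ via the scalar character $\vphi_\gamma(D)=b_D(\gamma)$ from Proposition~\ref{prop47}, and $\calA_{-k}\subset\D[-k]=\Delta^k\tD$ for $k\ge 1$; any $D=\Delta^k Q$ with $Q\in\tD$ then satisfies $D.h^\gamma=b_Q(\gamma)\Delta^k(h^\gamma)=0$ by Lemma~\ref{lem44}(b), since $k(\gamma)=0<k$. For irreducibility, $X(\gamma)=\bigoplus_{k\ge 0}\C f^k h^\gamma$ is a $\bTheta$-weight decomposition with pairwise distinct weights $k+d(\gamma)/n$, so any nonzero $\calA$-submodule contains some $f^k h^\gamma$; the action of $f\in\calA_1$ then produces all $f^j h^\gamma$ with $j\ge k$, and repeated application of $\Delta\in\calA_{-1}$ descends to $h^\gamma$, since $\Delta(f^j h^\gamma)\in\C^*\, f^{j-1}h^\gamma$ by Lemma~\ref{lem44}(b).

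The hardest step will be the distinctness claim. An $\calA$-iso\-mor\-phism $X(\gamma)\cong X(\gamma')$ must send lowest-weight vector to lowest-weight vector, forcing $\vphi_\gamma=\vphi_{\gamma'}$ on $\calA_0$; the problem therefore reduces to showing $\gamma\mapsto\vphi_\gamma$ is injective on $\Gamma$. Already $\bTheta\in\calA_0$ separates degrees via $\vphi_\gamma(\bTheta)=d(\gamma)/n$. For $\gamma\ne\gamma'$ of equal degree, I would extract additional separating elements from iterated Lie brackets in $f$ and $\Delta$; such brackets form an infinite family in $\calA_0$ when $n\ge 3$ by Proposition~\ref{prop412} (while $n=2$ gives $\calA\cong\fsl(2)$ and is classical), and their eigenvalues on $h^\gamma$ can be expressed through the polynomials $b_D$ of Proposition~\ref{prop47} in the coordinates of $\gamma$, so that together with $\bTheta$ they suffice to distinguish points of $\Gamma$.
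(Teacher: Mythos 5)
Most of your argument tracks the paper's proof closely: the regrouping of $\CV$ into isotypic components $M_\gamma=\bigoplus_k f^kV(\gamma)$ (the paper instead quotes the abstract decomposition $\CV=\bigoplus_\lambda\Hom_\g(E(\lambda),\CV)\otimes E(\lambda)$ from \cite[Theorem~4.5.16]{GW}, but the content is the same), the identification $X(\gamma)=\bigoplus_{k\ge 0}\C f^kh^\gamma$, the vanishing $\calA_-.h^\gamma=0$ via $\D[-k]=\Delta^k\tD$ and Lemma~\ref{lem44}(b), and the irreducibility via the raising/lowering action of $f$ and $\Delta$. One caution: you twice treat $\bTheta$ as an element of $\calA_0$, which is not established — the remark preceding Proposition~\ref{prop412} shows $[\Delta,f]=\psi(-\bTheta)+Q$ with $\deg\psi=n-1$ and $Q\in\Ker(\rad)$, so for $n\ge 3$ there is no reason for $\bTheta$ itself to lie in $\calA$. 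The irreducibility argument is easily repaired without it (for $0\ne y=\sum_{k\le k_0}c_kf^kh^\gamma$ with $c_{k_0}\ne 0$ in a submodule, $\Delta^{k_0}y$ is already a nonzero multiple of $h^\gamma$ by Lemma~\ref{lem44}), but your use of $\bTheta$ to ``separate degrees'' in the last step is not available.

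The genuine gap is the final assertion $X(\gamma)\cong X(\gamma')\Rightarrow\gamma=\gamma'$. Your reduction to the injectivity of $\gamma\mapsto\vphi_\gamma\in\calA_0^*$ is legitimate (the lowest weight line is the unique $\calA_-$-annihilated line, so an isomorphism matches the $\calA_0$-characters), but the proposed proof of that injectivity — producing separating elements of $\calA_0$ from iterated brackets of $f$ and $\Delta$ whose eigenvalues ``can be expressed through the polynomials $b_D$'' — is a plan rather than an argument. It is precisely these brackets that the paper points out are hard to compute, Proposition~\ref{prop412} only controls $\dim\calA_k$ and says nothing about which polynomial functions on $\Gamma$ the eigenvalue characters of $\calA_0$ realize, and nothing you have written rules out two distinct $\gamma,\gamma'\in\Gamma$ on which every element of the (a priori rather small) algebra $\calA_0\subset\tD$ has equal eigenvalue. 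The paper sidesteps this entirely: it identifies $X(\gamma)$ with the multiplicity space $\Hom_\g(E(p(\gamma)),\CV)$ and invokes the duality theorem \cite[Theorem~4.5.12]{GW}, together with the bijectivity of $p$ on $\Gamma$ (Lemma~\ref{lem42}(c)), to conclude. You need either that input or an actual proof that the characters $\vphi_\gamma$ separate the points of $\Gamma$; as it stands the last claim of the theorem is unproved.
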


\begin{proof}
  Recall that $\tGamma = \N \tlambda_0 \boplus \Gamma$,
  $\Gamma = \boplus_{i=1}^r \N \tlambda_i$. Let $\gamma =
  \sum_{i=1}^r a_i \tlambda_i \in \Gamma$; set $P_\gamma =
  \gamma+ \N \tlambda_0$. Then $h^{\gamma} = h_1^{a_1}\cdots
  h_r^{a_r}$ and $V(\gamma)= U(\fg).h^{\gamma} \cong
  E(p(\gamma))$, see Lemma~\ref{lem42}. From
  $f^bh^{\gamma}=h^{(b,a_1,\dots,a_r)}$ and
  $\Delta^b(f^{a_0}h_1^{a_1}\cdots h_r^{a_r}) \in \C^*
  f^{a_0-b}h_1^{a_1}\cdots h_r^{a_r}$ when $a_0 \ge b$, and
  $0$ when $a_0 < b$, we get that $X(\gamma)=
  U(\calA).h^{\gamma} = \bigoplus_{\mu \in P_\gamma} \C
  h^\mu$ is an irreducible $U(\calA)$-module. Let $D \in
  \calA_{-j} = \calA \cap\D[-j]$, $j > 0$, and write $D= D'
  \Delta^j$, $D' \in \tD$.  Then $D(h^{\gamma}) = D'
  \Delta^j(h^{\gamma})$ and we have noticed that
  $\Delta^j(h^{\gamma}) =0$, thus $\calA_-.h^{\gamma} = 0$.
  When $D \in \calA_0 = \calA \cap \tD$,
  Remark~\ref{rem48}(3) gives that $D(h^{\gamma}) =
  \vphi(D)h^{\gamma} \in \C h^{\gamma}$. Since it is obvious
  that $\vphi \in \calA_0^*$, $X(\gamma)$ is an irreducible
  lowest weight module.
  
  By \cite[Theorem~4.5.16]{GW} we know that the $\D \times
  \g$-module $\CV$ has the following decomposition:
  $$
  \CV = \bigoplus_{\lambda \in \Lambda^+}
  \Hom_\g(E(\lambda), \CV) \otimes_\C E(\lambda)
  $$
  where $\Hom_\g(E(\lambda), \CV)$ is either $(0)$ or a
  simple $\D$-module (the action being given by $D(\phi)(x)
  = D(\phi(x))$ for all $\phi \in \Hom_\g(E(\lambda), \CV),
  x \in E(\lambda)$). Since the $G$-module $V(\tgamma)$,
  $\tgamma \in \tGamma$, is isomorphic to $E(p(\tgamma))$,
  we obtain that $\Hom_\g(E(\lambda), \CV) =(0)$ if $\lambda
  \notin p(\Gamma) = p(\tGamma)$. If $\lambda \in p(\Gamma)$
  the non zero elements of this $\D$-module identify with
  the $\g$-highest weight vectors of weight $\lambda$ in
  $\CV$ through the map $\phi \mapsto \phi(v_\lambda)$,
  where $v_\lambda$ is a highest weight vector in
  $E(\lambda)$.
  It is easily seen that the $\g$-highest weight vectors of
  weight $\lambda$ in $\CV$ are the $h^\tgamma$ with
  $\tgamma = k \tlambda_0 + \gamma'$, $p(\gamma') =
  \lambda$.  Recall that for $\gamma,\gamma' \in \Gamma$,
  $p(\gamma)=p(\gamma') \iff \gamma=\gamma'$; therefore
  these $\g$-highest weight vectors are the $h^\tgamma$ with
  $\tgamma \in P_\gamma = \gamma + \N \tlambda_0$, where
  $\gamma \in \Gamma$ is such that $p(\gamma)=\lambda$. From
  the previous paragraph we then obtained that
  $\Hom_\g(E(\lambda), \CV) \cong X(\gamma)$ as
  $\calA$-module. The last assertion follows from
  \cite[Theorem 4.5.12]{GW}.
\end{proof}

\section{$D$-modules on some PHV}
\label{sec5}

In this section we continue with a representation $\tGV$ of
the connected reductive group $\tG$ as in \S\ref{ssec31}.

\subsection{Representations of Capelli type}
\label{ssec51}
Let
$$
\tau : \tilde{\g} = \Lie(\tG) \lto \DV
$$
be the differential of the $\tG$-action. The elements
$\tau(\xi)$ are linear derivations on $\CV$ given by:
$$
\tau(\xi)(\vphi)(v) = \frac{d}{dt}_{\mid t
  =0}(\Exp^{t\xi}.\vphi)(v)= \frac{d}{dt}_{\mid t
  =0}\vphi(\Exp^{-t\xi}.v),
$$
for all $\vphi \in \CV, v \in V$.  They are homogeneous of
degree zero in the sense that $[\Theta,\tau(\xi)]=0$.  The
map $\tau$ yields a homomorphism $\tau : U(\tilde{\g}) \to
\DV$.

Recall that the group $\tG$ acts naturally on $\DV$; the
differential of this action is given by $D \mapsto
[\tau(\xi),D]$, $\xi \in \tfg$, $D \in \DV$. Therefore, a
subspace $I \subset \DV$ is stable under $\tG$, resp.~$G$,
if and only if $[\tau(\tfg,I] \subset I$, resp.~$[\tau(\g,I]
\subset I$. It is then clear that $\tD= \DVtG = \{D \in \DV
: [\tau(\tfg),D] = 0\} \subset \D = \DVG = \{D \in \DV :
[\tau(\g),D] = 0\}$. In particular, if $Z(\tfg) =
U(\tfg)^{\tG}$ is the centre of $U(\tilde{\g})$, then
$\tau(Z(\tilde{\g})) \subset \tD$.

Following \cite[(10.3)]{HU} we make the following
definition:

\begin{defn}
  \label{capelli}
  We say that $\tGV$ is of Capelli type if:
  \begin{itemize}
  \item $\tGV$ is irreducible and multiplicity free;
  \item $\tau(Z(\tilde{\g})) = \tD$.
  \end{itemize}
\end{defn}

\begin{rems}
  \label{rems52}
  (1) By Howe and Umeda \cite{HU}, in the list of $\tGV$
  which are irreducible and MF we have: three among the
  thirteen cases are not of Capelli type; two among the ten
  cases such that $\dim V \qmod G = 1$ are not of Capelli
  type.  (Thus we are interested in eight cases of the table
  in Appendix~\ref{A1}.)
  
  (2) This definition originates in the case $(\tG = \GL(n)
  \times \SL(n) : V = \Mat_n(\C))$ where the writing of $E_0
  = f\Delta = \det(x_{ij})\det(\partial_{ij})$ as an element
  of $\tau(Z(\tilde{\g}))$ is the ``classical'' Capelli
  identity.
\end{rems}

Recall the morphism $\rad : \DVG \to \calD(V\qmod G)$.  By
definition $\tau(\g)(\CVG) = 0$, thus one always has:
$$
[\DV \tau(\g)]^G \subset J = \Ker(\rad).
$$
When $\tGV$ satisfies Hypothesis~\ref{hypB}, we have
computed in Theorem~\ref{thm410} the ideal $J \subset \D$:
$$
J = \sum_{i=0}^r \D \Omega_i = \sum_{i=0}^r \Omega_i\D
$$
where the $\Omega_i$'s are defined in~\eqref{eq45}. When
$\tGV$ is irreducible we observed in Remark~\ref{rem411}(1)
that we can number these operators so that $\Omega_r =0$,
hence $J = \sum_{i=0}^{r-1} \D \Omega_i$; the next
proposition gives a more useful description if, moreover,
$\tGV$ is of Capelli type, i.e.~one of the eight cases
mentioned in Remark~\ref{rems52}(1).

\begin{prop}
  \label{prop53}
  Let $\tGV$ be of Capelli type and such that $\dim V \qmod
  G = 1$.  Then:
  $$
  \Ker(\rad) = [\DV\tau(\g)]^G.
  $$
\end{prop}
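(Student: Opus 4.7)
The plan is to combine the structure theorem for $J=\Ker(\rad)$ from Theorem~\ref{thm410} with the Capelli hypothesis $\tau(Z(\tfg))=\tD$. The inclusion $[\DV\tau(\g)]^G \subset J$ is automatic because $\tau(\g)$ annihilates $\CVG=\C[f]$, so I only need to establish the reverse inclusion. By Theorem~\ref{thm410}(v) and Remark~\ref{rem411}(1) (which applies since $V$ is irreducible, forcing $\Omega_r=0$), it suffices to show $\Omega_i = E_i - b_{E_i}(\bTheta) \in [\DV\tau(\g)]^G$ for $0 \le i \le r-1$. Each $\Omega_i$ is $\tG$-invariant, hence $G$-invariant, so the task reduces to proving $\Omega_i \in \DV\tau(\g)$.

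First I would invoke the Capelli hypothesis to write $E_i = \tau(z_i)$ for some $z_i \in Z(\tfg)$. Since $\g$ is semisimple and $\fc=\Lie(C)$ is central in $\tfg$, one has $\tfg=\g\oplus\fc$ as a direct sum of ideals, whence $U(\tfg)\cong U(\g)\otimes S(\fc)$ and $Z(\tfg) = Z(\g)\otimes S(\fc)$. Writing $Z(\g)=\C\oplus Z(\g)_+$ with $Z(\g)_+ := Z(\g)\cap U(\g)\g$, decompose
\[
z_i = 1\otimes \eta_i + z_i', \qquad \eta_i \in S(\fc),\ z_i' \in Z(\g)_+\otimes S(\fc).
\]

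The key step is to show $\tau(z_i') \in \DV\tau(\g)$. On one hand $\tau(Z(\g)_+)\subset \tau(U(\g)\g)\subset \DV\tau(\g)$. On the other hand, since $[\g,\fc]=0$, the element $\bTheta\in\tau(\fc)$ commutes with $\tau(\g)$; consequently, for any $a\in\DV$, $\xi\in\g$ and $p\in\C[s]$,
\[
a\,\tau(\xi)\,p(\bTheta) = a\,p(\bTheta)\,\tau(\xi) \in \DV\tau(\g),
\]
so $\DV\tau(\g)\cdot \C[\bTheta]\subset \DV\tau(\g)$. Combining these facts yields $\tau(z_i') \in \DV\tau(\g)\cdot \C[\bTheta]\subset \DV\tau(\g)$. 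Moreover, since $V$ is $\tG$-irreducible, $C$ acts on $V$ through a single character, so $\tau(\fc) = \C\bTheta$ and $\tau(1\otimes \eta_i) = q_i(\bTheta)$ for a polynomial $q_i \in \C[s]$. Therefore
\[
E_i \equiv q_i(\bTheta) \pmod{\DV\tau(\g)}.
\]

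The final step is to identify $q_i$ with $b_{E_i}$. Since every element of $\DV\tau(\g)$ annihilates $f^m$, evaluating the congruence at $f^m$ gives $b_{E_i}(m)f^m = E_i(f^m) = q_i(m)f^m$ for all $m\in \N$, hence $q_i=b_{E_i}$ as polynomials. Thus $\Omega_i = E_i - b_{E_i}(\bTheta) \in \DV\tau(\g)$, and being $G$-invariant, $\Omega_i \in [\DV\tau(\g)]^G$, which finishes the proof. The only delicate point to verify is the stability $\DV\tau(\g)\cdot\C[\bTheta]\subset\DV\tau(\g)$, which I expect to be the main (but mild) technical ingredient; it rests purely on the commutation $[\tau(\fc),\tau(\g)]=0$ coming from the reductive decomposition $\tfg=\g\oplus\fc$.
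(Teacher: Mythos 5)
Your proof is correct and follows essentially the same route as the paper: the paper likewise writes $Z(\tfg)=Z(\g)[\zeta]$ with $Z(\g)=\C\oplus Z_+(\g)$, observes that $\tau(Z_+(\g))\subset[\DV\tau(\g)]^G$, and then kills the residual $\C[\bTheta]$-part by applying $\rad$ (equivalently, by evaluating on the powers $f^m$, exactly as you do). The only cosmetic difference is that the paper decomposes the $\Omega_j$ directly and concludes $p_{j,k}=0$ from $\rad(\Omega_j)=0$, whereas you decompose the $E_i$ and identify the scalar polynomial with $b_{E_i}$; your right-multiplication stability $\DV\tau(\g)\cdot\C[\bTheta]\subset\DV\tau(\g)$ is handled correctly and is anyway harmless since everything involved lies in the commutative algebra $\tD$.
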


\begin{proof}
  In the irreducible case the centre $C$ of $\tG$ acts by
  scalars on $V$ and we may assume that: $\tG = GC$ with $C
  = \C^*$. Write $\tfg = \g \boplus \fc$, $\fc =\Lie(C) = \C
  \zeta$.  Since $\CVG = \C[f]$ and $f$ is not
  $\tG$-invariant one can also suppose that $\tau(\zeta) =
  \bTheta = \frac{1}{n}\Theta$.
  
  Write $Z(\tfg)= Z(\g)[\zeta]$ and $Z(\g) = \C \boplus
  Z_+(\g)$ where $Z_+(\g)= [U(\g)\g]^G$. The previous
  paragraph implies that $\tau(Z(\tfg))= \C[\bTheta] +
  \tau(Z_+(\g))[\bTheta]$ with $\tau(Z_+(\g)) =
  [U(\tau(\g))\tau(\g)]^G \subset \DVtauG$. As recalled
  above we already know that $J = \sum_{i=0}^{r-1} \D
  \Omega_i$, $\Omega_i \in \tD$, $0 \le i \le r-1$. By
  hypothesis $\tau(Z(\tilde{\g})) = \tD$, thus we can write
  each $\Omega_j$ as follows:
  $$
  \Omega_j = \sum_{k \ge 0} \bTheta^k P_{j,k}, \quad P_{j,k}
  = p_{j,k} + P'_{j,k}, \ p_{j,k} \in \C, \ P'_{j,k} \in
  \tau(Z_+(\g)).
  $$
  Recall that $[\DV \tau(\g)]^G \subset J$; thus we have
  $\rad(P'_{j,k}) = 0$ and we obtain: $\rad(\Omega_j) =
  \sum_{k \ge 0} \theta^k p_{j,k}=0$ in $R
  =\C[z,\delta,\theta]$.  Therefore $p_{j,k} =0$ for all $k
  \ge 0$, which gives $\Omega_j = \sum_{k \ge 0} \bTheta^k
  P'_{j,k} \in \DVtauG$ and $J \subset \DVtauG$.
\end{proof}

\subsection{Application to $D$-modules}
\label{ssec52} {\em In this subsection we assume that $\tGV$
  satisfies Hypothesis~\ref{hypB}, hence $\tGV$ is MF,
  $\CVG= \C[f]$, $f \notin \C[V]^{\tG}$.}

Recall from Theorem~\ref{thm410} that $\D = \bigoplus_{k \in
  Z} \D[k]$. We have seen (Lemma~\ref{lem49}) that if $D \in
\tD$ there exists a polynomial $b_D(s)$ such that $D =
b_D(\bTheta) + D_1$, $D_1 \in J = \Ker(\rad)$.

Fix $P \in \D[k]$ and write $P = DQ_k$, $Q_k = f^k$ if $k
\ge 0$, $Q_k = \Delta^{-k}$ if $k <0$, $D \in \tD$. Then:
\begin{equation}
  \label{eq51}
  P = b_D(\bTheta)Q_k + P_1, \quad P_1 = D_1Q_k \in J.
\end{equation}
Observe that, since $\Delta(f^m) = b^*(m)f^{m-1}$,
$$
P(f^m) =
\begin{cases}
  b_D(m+k)f^{m+k}  & \  \text{if $k \ge 0$;}\\
  b^*(m)b^*(m-1)\cdots b^*(m+k+1)b_D(m+k) f^{m+k} & \
  \text{if $k < 0$.}
\end{cases}
$$
Therefore if we set $a_P(s) = b_D(s+k)$ if $k \ge 0$, or
$a_P(s) = b_D(s+k)\prod_{j=0}^{k+1}b^*(s+j)$ if $k < 0$,
then $\deg a_P = \deg b_P$ or $\deg b_P + nk$, and $P(f^m) =
a_P(m) f^{m+k}$. Notice that $a_{Q_k}(s) =1$ if $k \ge 0$,
$a_{Q_k}(s) =\prod_{j=0}^{k+1}b^*(s+j)$ if $k < 0$, thus
$a_P(s) = b_D(s+k)a_{Q_k}(s)$.

When $\tGV = (\GL(n) : S^2\C^n)$, similar results were
obtained by Masakazu Muro in \cite[Proposition~3.8]{Mu4},
where our polynomial $a_P(s)$ is denoted by $b_P(s)$.

\begin{defn}
  \label{defn54}
  Let $P \in \D[k]$ be as above and define the $D$-module
  associated to $P$ by:
  $$
  \calM_P = \DV \big/(\DV\tau(\g) + \DV P) = \DV/I_P,
  $$
  where $I_P= \DV\tau(\g) + \DV P$.
\end{defn}

\begin{remark}
  Let $I \subset \DV$ be a left ideal containing
  $\DV\tau(\g)$. Since the condition $[\tau(\g),I] \subset
  I$ is satisfied, the group $G$ acts naturally on $I$ and
  therefore on $M=\DV/I$. Furthermore, the differential of
  this action is given by the left multiplication on $M$ by
  $\tau(\xi)$, $\xi \in \g$. It follows, see \cite[\S II.2,
  Theorem]{Hot2}, that $M$ is a $G$-equivariant $D$-module
  on $V$ (cf.~\cite{Hot2} for the definition). This is in
  particular true for $\calM_P$.
\end{remark}

Following \cite{Mu1,Mu2,Mu3,Mu4} we want to study the
solutions of the differential system associated to
$\calM_P$. We first need to study the characteristic variety
on $\cMP$.

Recall that $\DV$ is filtered by the order of differential
operators, see~\cite{Bo, Hot1}, and that the associated
graded ring of $\DV$ identifies with $\C[T^*V]$, where $T^*V
= V \times V^*$ is the cotangent bundle of $V$.  If $u \in
\DV$ we denote its order by $\ord u$ and its principal
symbol by $\sigma(u) \in \C[T^*V] = S(V^*) \otimes S(V)$.
Let $M$ be a finitely generated $\DV$-module, then one can
endow $M$ with a good filtration and one defines the
characteristic variety of $M$, denoted by $\ch M$, as being
the support in $T^*M$ of the associated graded module (see,
e.g., \cite[\S I.3]{Hot1}). When $M = \DV/ I$, $\ch M
\subset T^*M$ is the variety of zeroes of the symbols
$\sigma(u)$, $u \in I$.  Recall that if $\dim \ch M \le \dim
V$ the $D$-module $M$ is called holonomic (one always have
$\dim \ch M \ge \dim V$ if $M \ne (0)$).

\begin{remark}
  Let $v \in V, v^* \in V^*, \xi \in \tfg$. One has:
  $$
  \sigma(\Theta)(v,v^*) = \dual{v^*}{v}, \quad
  \sigma(\tau(\xi))(v,v^*) = -\dual{\xi.v^*}{v} =
  \dual{v^*}{\xi.v}.
  $$
\end{remark}

\begin{lem}
  \label{lem540}
  Let $k \in \Z$ and $P= DQ_k$ be as above.

  \noindent {\rm (a)} There exists $Q_k' \in J$ and $q_k(s)
  \in \C[s]$ such that $Q_kQ_{-k} = q_k(\bTheta) + Q_k'$,
  $\ord(Q_kQ_{-k}) = \ord(Q_{-k}Q_{k})= \deg q_k = |k|n$.

  \noindent {\rm (b)} Write $P = b_D(\bTheta)Q_k + P_1$ as
  in~\eqref{eq51} and set $P_0 = b_D(\bTheta)Q_k$. Then
  $P_0Q_{-k} = b_D(\bTheta)q_k(\bTheta) + P_2$ with $P_2 \in
  J$ and $\ord P_2 \le \ord(P_0Q_{-k}) = \deg (b_Dq_k)$.
\end{lem}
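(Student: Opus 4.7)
The plan is to apply Lemma~\ref{lem49} to the product $Q_kQ_{-k}$. First I would check that $Q_kQ_{-k} \in \tD$: the polynomial $f^{|k|}$ is a relative invariant of $\tG$ with character $\chi^{|k|}$ (where $\chi$ is the weight of $f$), while $\Delta = \partial(f^*)$ carries weight $\chi^{-1}$, so $\Delta^{|k|}$ has weight $\chi^{-|k|}$ and their product is $\tG$-invariant. The same argument handles $Q_{-k}Q_k$. Lemma~\ref{lem49} applied to $Q_kQ_{-k}$ then produces a polynomial $q_k(s) \in \C[s]$, determined by $(Q_kQ_{-k})(f^m) = q_k(m)f^m$, together with $Q_k' := Q_kQ_{-k} - q_k(\bTheta) \in J$, which is exactly the decomposition required by part~(a).

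To identify $q_k$ explicitly, I would iterate $\Delta(f^m)= b^*(m)f^{m-1}$ (Theorem~\ref{thm31}). For $k \ge 0$, this gives $(f^k\Delta^k)(f^m) = \prod_{j=0}^{k-1} b^*(m-j)\, f^m$, so $q_k(s) = \prod_{j=0}^{k-1} b^*(s-j)$; for $k<0$, $(\Delta^{|k|}f^{|k|})(f^m) = \prod_{j=1}^{|k|} b^*(m+j)\, f^m$, so $q_k(s) = \prod_{j=1}^{|k|} b^*(s+j)$. Since $\deg b^* = n$, $\deg q_k = |k|n$ in either case. For the order statement, one of the two factors in $Q_kQ_{-k}$ has order $0$ and the other has order $|k|n$, giving $\ord(Q_kQ_{-k}) \le |k|n$; equality follows in the symbol algebra $\gr\DV$, since $\sigma(f^k\Delta^k) = f^k\sigma(\Delta)^k$ is nonzero (the top-order symbol of $\Delta$ being a nonzero homogeneous element of degree $n$), and similarly $\sigma(\Delta^{|k|}f^{|k|}) = f^{|k|}\sigma(\Delta)^{|k|} \ne 0$ handles $Q_{-k}Q_k$.

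For part~(b) I would substitute~(a) into the definition: $P_0Q_{-k} = b_D(\bTheta)(Q_kQ_{-k}) = b_D(\bTheta)q_k(\bTheta) + P_2$ with $P_2 := b_D(\bTheta)Q_k'$. By Theorem~\ref{thm410}(v), $J$ is a two-sided ideal of $\D$; since $b_D(\bTheta) \in \tD \subset \D$ and $Q_k' \in J$, this forces $P_2 \in J$. For the order estimates, $\ord b_D(\bTheta) = \deg b_D$ (as $\bTheta$ has order one), and combining with $\ord Q_k + \ord Q_{-k} = |k|n$ yields $\ord(P_0Q_{-k}) \le \deg b_D + |k|n = \deg(b_Dq_k)$, with equality coming from the nonvanishing top symbol of $b_D(\bTheta)q_k(\bTheta)$ (when $b_Dq_k \ne 0$). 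The identity $Q_k' = Q_kQ_{-k} - q_k(\bTheta)$ together with part~(a) gives $\ord Q_k' \le |k|n$, so $\ord P_2 \le \deg b_D + |k|n$ as required. The argument is essentially bookkeeping; the only real point of care is the order bound on $Q_k'$, which is what prevents reduction modulo $J$ from inflating the order of $P_0Q_{-k}$.
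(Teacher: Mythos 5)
Your proposal is correct and follows essentially the same route as the paper: compute $Q_kQ_{-k}(f^m)$ via the Bernstein--Sato relation to identify $q_k$, invoke Lemma~\ref{lem49} (equivalently, the definition of $J$) for the decomposition $Q_kQ_{-k}=q_k(\bTheta)+Q_k'$, and then deduce (b) by left-multiplying by $b_D(\bTheta)$ and bookkeeping orders in the associated graded domain. The only cosmetic difference is that the paper packages $q_k$ as $a_{Q_{-k}}(s)a_{Q_k}(s-k)$ using the polynomials $a_P$ introduced just before the lemma, whereas you expand the same product explicitly.
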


\begin{proof}
  (a) Let $m \in \N$. Then:
  $$
  Q_kQ_{-k}(f^m)= a_{Q_{-k}}(m)Q_k(f^{m-k})=
  a_{Q_{-k}}(m)a_{Q_k}(m-k)f^m.
  $$
  Set $q_k(s) = a_{Q_{-k}}(s)a_{Q_k}(s-k) \in \C[s]$. The
  previous computation yields $Q_kQ_{-k} = q_k(\bTheta) +
  Q_k'$ with $Q_k' \in J$. Since $\deg a_{Q_k} =1$ or $-nk$
  (if $k \ge 0$ or $<0$) we get that $\deg q_k = |k|n$. On
  the other hand, $Q_kQ_{-k} = f^k\Delta^{-k}$ or
  $\Delta^{k}f^{-k}$ has order $\ord \Delta^{|k|}$,
  i.e.~$|k|n = \deg q_k$. This implies in particular that
  $\ord Q_k' \le \ord(Q_kQ_{-k})$.
  
  (b) From (a) we obtain $P_0Q_{-k} =
  b_D(\bTheta)q_k(\bTheta) + P_2$, $P_2 =
  b_D(\bTheta)Q'_k\in J$. One has: $\ord(P_0Q_{-k})= \ord
  P_0 + \ord Q_{-k} = \deg b_D + \ord Q_k + \ord Q_{-k} =
  \deg b_D + \ord(Q_kQ_{-k})$. Therefore, $\ord P_2 = \deg
  b_D + \ord Q'_k \le \deg b_D + \ord(Q_kQ_{-k}) =
  \ord(P_0Q_{-k})$, as desired.
\end{proof}

As in \cite[Section~3]{Pa2} define the commuting varieties
of $\tGV$ and $\GV$ by:
$$
\tcV =\bigl\{(v,v^*) \in T^*V : \dual{v^*}{\tilde{\g}.v} =0
\bigr\}, \quad \cV =\bigl\{(v,v^*) \in T^*V :
\dual{v^*}{\g.x} =0 \bigr\}.
$$

Recall that $\tGV$ is MF; this implies \cite{Kac} that $\tG$
has finitely many orbits in $V$, denoted by $O_1,\dots,O_t$.
Set $T^*_{O_i}V= \{(v,v^*) \in T^*V : v \in O_i, \,
\dual{v^*}{\tfg.v} = 0\}$. By \cite{Pya}, see
also~\cite[Theorem~3.2]{Pa2}, we have the following result:

\begin{thm}
  \label{Pya}
  The irreducible components of $\tcV$ are the closures of
  the conormal bundles of the orbits, i.e.~the $\calC_i =
  \overline{T_{O_i}^*V}$. In particular, $\tcV$ is
  equidimensional of dimension $\dim V$.
\end{thm}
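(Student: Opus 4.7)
The plan is to use the finite orbit decomposition guaranteed by multiplicity freeness, together with the standard geometry of the commuting variety for a group action with finitely many orbits. I would first invoke Kac's theorem, already cited, which yields a partition $V=\bigsqcup_{i=1}^t O_i$ into finitely many $\tG$-orbits, each of them smooth, irreducible (since $\tG$ is connected) and locally closed in $V$.

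The next step is to unwind the definition of $\tcV$ and match it with conormal bundles. For $v\in O_i$ one has $T_vO_i=\tfg\cdot v$, so the defining condition $\langle v^*,\tfg\cdot v\rangle=0$ becomes $v^*\in(T_vO_i)^\perp$, which is exactly the fiber of $T^*_{O_i}V$ over $v$. Using the orbit partition of $V$, this yields a set-theoretic decomposition $\tcV=\bigsqcup_{i=1}^t T^*_{O_i}V$. Since $\tcV$ is visibly Zariski-closed in $T^*V$ (being cut out by the finitely many bilinear equations $\langle v^*,\xi\cdot v\rangle=0$ for $\xi$ in a basis of $\tfg$), taking closures gives $\tcV=\bigcup_{i=1}^t\overline{T^*_{O_i}V}$. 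Each $T^*_{O_i}V$ is a vector bundle of rank $\codim_V O_i$ over the smooth irreducible base $O_i$, hence $\overline{T^*_{O_i}V}$ is irreducible of dimension $\dim O_i+\codim_V O_i=\dim V$.

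It then remains to check that no $\overline{T^*_{O_i}V}$ is strictly contained in another. If $\overline{T^*_{O_i}V}\subset \overline{T^*_{O_j}V}$, both being irreducible closed of the same dimension $\dim V$, equality must hold. Projecting to $V$ via the first factor $\pi_1$, and using that $\pi_1^{-1}(\overline{O_k})$ is a closed set containing $T^*_{O_k}V$, one has $\pi_1(\overline{T^*_{O_k}V})\subset\overline{O_k}$ for both $k=i,j$; combined with $O_k\subset\pi_1(\overline{T^*_{O_k}V})$ this forces $\overline{O_i}=\overline{O_j}$. Since there are only finitely many orbits, the irreducible closed $\tG$-stable subvariety $\overline{O_i}$ contains a unique dense orbit, so $O_i=O_j$. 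The $\overline{T^*_{O_i}V}$ are therefore the distinct irreducible components of $\tcV$, proving in particular that $\tcV$ is equidimensional of dimension $\dim V$. The only real subtlety is the careful handling of the projection $\pi_1$, which is not a closed map; otherwise the argument is a routine combination of Kac's finiteness theorem and the basic geometry of conormal bundles to smooth subvarieties.
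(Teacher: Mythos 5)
Your proof is correct. The paper does not actually prove this statement: it is quoted from Pyasetskii and from Panyushev's \cite[Theorem~3.2]{Pa2}, with the finiteness of the orbit set coming from Kac's theorem exactly as you use it. Your argument — identifying the fiber of $\tcV$ over $v\in O_i$ with the conormal space $(T_vO_i)^\perp$, writing $\tcV=\bigsqcup_i T^*_{O_i}V$, and ruling out containments among the closures $\calC_i$ by projecting to $V$ and using that an irreducible $\tG$-stable closed subvariety with finitely many orbits has a unique dense orbit — is precisely the standard proof underlying the cited references, and all the steps (closedness of $\tcV$, irreducibility and dimension $\dim V$ of each conormal bundle, the non-closedness of $\pi_1$ being irrelevant because one only needs $\pi_1(\overline{T^*_{O_k}V})\subset\overline{O_k}$ and $O_k\subset\pi_1(\overline{T^*_{O_k}V})$) are handled correctly.
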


\begin{rem}
  \label{rem55}
  Set $\cV' = \bigl\{(v,v^*) \in T^*V : \sigma(u)(v,v^*) = 0
  \; \text{for all $u \in \DV\tau(\g)$}\}$.  Thus $\cV'$ is
  the characteristic variety of the $\DV$-module
  \begin{equation}
    \label{eq52}
    \cN = \DV \big/ \DV\tau(\g)
  \end{equation}
  Let $P \in \D[k]$.  Then we clearly have:
  $$
  \tcV \subset \cV, \quad \ch \cMP \subset \cV' \subset \cV.
  $$
\end{rem}

{\em We will now assume that $\tGV$ is irreducible.}  This
means that $\tGV$ is one of the cases (1) to (10) in the
table of Appendix~\ref{A1}.  We may assume here that $\tG=
GC$, $C \cong \C^*$.  We then write $\tfg = \g \boplus
\C\zeta$, $\fc= \Lie(C) = \C \zeta$ where $\zeta$ is chosen
such that $\tau(\zeta) = \bTheta$ (i.e.~$\zeta$ acts as
$\frac{1}{n} \id_V$ on $V$).

\begin{cor}
  \label{cor56}
  Under the previous hypothesis:
  
  -- $\dim \cV = \dim V +1$;
  
  -- the module $\cN$ is not holonomic, i.e.~$\dim \cV' =
  \dim V +1$.
\end{cor}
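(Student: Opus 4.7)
The plan is to sandwich each of $\dim \cV$ and $\dim \cV'$ between matching upper and lower bounds of $\dim V + 1$.

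For $\dim \cV = \dim V + 1$, since $\tG = GC$ with $\fc = \C\zeta$ and $\tau(\zeta) = \bTheta$, the principal symbol $\sigma(\bTheta)(v, v^*) = \frac{1}{n}\langle v^*|v\rangle$ is a single non-zero function that cuts $\tcV = \cV \cap \{\langle v^*|v\rangle = 0\}$ out of $\cV$. Theorem~\ref{Pya} gives $\dim \tcV = \dim V$, and Krull's Hauptidealsatz then yields $\dim \cV \le \dim \tcV + 1 = \dim V + 1$. The reverse inequality is geometric: over the open $\tG$-orbit $\tilde{O}_0 \subset V$, the $G$-orbits have codimension exactly $1$ in $V$ (since Hypothesis~\ref{hypB} forces $\dim V \qmod G = 1$), so the preimage of $\tilde{O}_0$ in $\cV$ is a rank-$1$ vector bundle over a base of dimension $\dim V$, of total dimension $\dim V + 1$.

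For $\dim \cV' = \dim V + 1$, the upper bound $\dim \cV' \le \dim V + 1$ is immediate from Remark~\ref{rem55} and part~(i). For the opposite inequality I will show that $\cN$ is not holonomic. We have
$$
\Hom_{\DV}(\cN, \cO_V) = \CV^{\tau(\g)} = \CVG = \C[f],
$$
which is infinite-dimensional. If $\cN$ were holonomic, however, it would have finite length as a $\DV$-module, and because $\cO_V$ is a simple holonomic $\DV$-module, $\Hom_{\DV}(\cN, \cO_V)$ would be finite-dimensional --- a contradiction. Hence $\dim \cV' > \dim V$, and combined with the upper bound this yields $\dim \cV' = \dim V + 1$.

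The principal subtlety lies in the non-holonomicity in part~(ii): a direct analysis of $\gr(\DV\tau(\g))$ seems impractical, and the cleaner Hom-based argument rests on the combined input of the finite-length property of holonomic modules together with the simplicity of $\cO_V$ as a $\DV$-module. With these in place the argument is short; without them one would be forced to compute the characteristic cycle by less conceptual means.
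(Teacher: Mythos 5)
Your proof is correct, and for the second assertion it coincides with the paper's: both arguments identify $\Hom_{\DV}(\cN,\CV)$ with $\CV^{\tau(\g)}=\CVG=\C[f]$ and derive a contradiction from the finite-dimensionality of this Hom-space when $\cN$ is holonomic (the paper simply cites \cite[Theorem~9.5.5]{MR} where you re-derive the finiteness from finite length plus the simplicity of $\cO_V$ and $\End_{\DV}(\cO_V)=\C$; both are fine). For the first assertion, however, you take a genuinely different route. The paper observes that $\GV$ is visible by \cite[Theorem~1]{Kac} and then quotes Panyushev's results \cite{Pa1} on commuting varieties of visible representations with $\C(V)^G=\C(f)$ to get $\dim\cV=\dim V+1$ in one stroke. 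You instead sandwich the dimension: the upper bound comes from $\tcV=\cV\cap\alpha^{-1}(0)$ with $\alpha=\sigma(\bTheta)$, Theorem~\ref{Pya} and Krull's Hauptidealsatz (note Krull is applied componentwise, and the inequality $\dim(X\cap\alpha^{-1}(0))\ge \dim X-1$ holds whether or not $\alpha$ vanishes on a component, so this is airtight); the lower bound comes from the fact that over the open $\tG$-orbit the generic $G$-orbit has codimension one (Rosenlicht plus $\C(V)^G=\C(f)$ from Proposition~\ref{prop32}), so $\cV$ contains a rank-one conormal bundle over a $\dim V$-dimensional base. Your version is more elementary and self-contained --- it uses only Theorem~\ref{Pya}, which the paper states anyway, instead of the external machinery of \cite{Pa1} --- while the paper's citation gives the stronger structural information contained in Panyushev's theorems. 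Both are valid; also note that your final step ($\dim\cV'>\dim V$ plus $\cV'\subset\cV$ forces $\dim\cV'=\dim V+1$) is exactly the implicit content of the ``i.e.'' in the statement.
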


\begin{proof}
  By \cite[Theorem~1]{Kac} the representation $\GV$ is
  visible, i.e.~$\{v \in V : f(v) = 0\}$ contains a finite
  number of $G$-orbits.  Then, since $\C(V)^G = \C(f)$ and
  $f$ is non constant, \cite[Theorems~2.3 \&~3.1,
  Corollary~2.5]{Pa1} yield $\dim \cV = \dim V + 1$.
  
  Recall that if $M$ is any $\DV$-module one can identify
  $\Hom_\DV(\cN,M)$ with the space $\{x \in M : \tau(\g).x =
  0\}$. In particular, $\Hom_\DV(\cN,\CV) \equiv \CVG =
  \C[f]$. If $\cN$ were holonomic we would have $\dim_\C
  \Hom_\DV(\cN,\CV) < \infty$, cf.~\cite[Theorem~9.5.5]{MR},
  which is absurd.
\end{proof}

Assume that $\tGV$ is of Capelli type and let $P =
b_D(\bTheta)Q_k + P_1$, $P_1 \in J$, as in~\eqref{eq51}. By
Proposition~\ref{prop53}, $I_P= \DV\tau(\g) + \DV
b_D(\bTheta)Q_k$. Therefore:
\begin{equation}
  \label{eq53}
  \calM(b_D,k) = \calM_P = \DV \big/(\DV\tau(\g) + \DV
  b_D(\bTheta) Q_k)
\end{equation}
depends only on the polynomial $b_D(s)$ and the integer $k$.
We need to know in which case $\calM(b_D,k)$ is holonomic.

\begin{thm}
  \label{thm57}
  Assume that $\tGV$ is of Capelli type and let $P=
  b_D(\bTheta)Q_k + P_1$ with $P_1 \in \Ker(\rad)$. The
  following are equivalent:
  $$
  {\rm (i)} \ b_D(s) \ne 0 \, ; \ \; {\rm (ii)} \
  \calM(b_D,k) = \calM_P \ \text{is holonomic.}
  $$
  In this case $\ch \calM_P \subset \tcV$ is a union of
  $\calC_i$'s.
\end{thm}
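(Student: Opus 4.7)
The plan is to treat the two directions separately. For (ii)$\Rightarrow$(i), I argue by contraposition: if $b_D=0$ then $P_0=0$, so $P=P_1\in J$; the Capelli hypothesis via Proposition~\ref{prop53} gives $J=[\DV\tau(\g)]^G\subset\DV\tau(\g)$, whence $I_P=\DV\tau(\g)$ and $\cMP=\cN$, which is not holonomic by Corollary~\ref{cor56}.

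For (i)$\Rightarrow$(ii), Proposition~\ref{prop53} lets me absorb $P_1\in J$ into $\DV\tau(\g)$, so $I_P=\DV\tau(\g)+\DV P_0$. The crux is to produce a nonzero polynomial $\beta(\bTheta)\in I_P$: its principal symbol will then be a nonzero constant multiple of $\langle v^*,v\rangle^{\deg\beta}$, and combined with the symbols $\sigma(\tau(\xi))=\langle v^*,\xi.v\rangle$ for $\xi\in\g$ this forces
\[
\ch\cMP \;\subset\; \cV\cap\{\langle v^*,v\rangle=0\}=\tcV.
\]
Since $\tcV$ is equidimensional of dimension $\dim V$ by Theorem~\ref{Pya}, holonomicity follows at once. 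The main obstacle is that $P_0 Q_{-k}$ is not a priori in the \emph{left} ideal $I_P$; I sidestep this by working with $Q_{-k}P_0\in\DV P_0\subset I_P$ instead. The commutation $[\bTheta,Q_{-k}]=-kQ_{-k}$ from Remark~\ref{rem411}(2) gives $Q_{-k}b_D(\bTheta)=b_D(\bTheta+k)Q_{-k}$, and Lemma~\ref{lem540}(a) applied to $-k$ in place of $k$ yields $Q_{-k}Q_k=q_{-k}(\bTheta)+Q'_{-k}$ with $Q'_{-k}\in J$ and $\deg q_{-k}=|k|n$. Combining,
\[
Q_{-k}P_0 \;=\; b_D(\bTheta+k)\,q_{-k}(\bTheta)+b_D(\bTheta+k)\,Q'_{-k}.
\]
Since $J$ is a two-sided ideal of $\DVG$ contained in $\DV\tau(\g)\subset I_P$, the last summand is in $I_P$, so $\beta(\bTheta):=b_D(\bTheta+k)\,q_{-k}(\bTheta)\in I_P$, and this is nonzero exactly when $b_D\ne 0$.

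For the final assertion about components, the Bernstein inequality forces each irreducible component of $\ch\cMP$ to have dimension at least $\dim V$, while the inclusion $\ch\cMP\subset\tcV$ provides the matching upper bound. The ideal $I_P$ is $\tG$-stable (since $\DV\tau(\g)$ is, and $Q_k$ is a $C$-semi-invariant), so $\ch\cMP$ is a $\tG$-stable closed subvariety of $\tcV=\bigcup_i\calC_i$; each of its irreducible components is then a $\tG$-stable irreducible closed subvariety of $\tcV$ of top dimension, and must therefore coincide with one of the $\calC_i$.
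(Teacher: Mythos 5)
Your proof is correct, and its overall strategy coincides with the paper's: reduce everything to the inclusion $\ch \cMP \subset \tcV$ by manufacturing a power of $\alpha=\sigma(\bTheta)$ out of $P_0=b_D(\bTheta)Q_k$ and $Q_{-k}$, and then invoke Theorem~\ref{Pya} together with the lower bound $\dim$ of each component of a characteristic variety. The one genuine difference is where you place $Q_{-k}$. The paper multiplies on the right, forms $P_0Q_{-k}=b_D(\bTheta)q_k(\bTheta)+P_2$ with $P_2\in J$ (Lemma~\ref{lem540}(b)), and never needs $P_0Q_{-k}$ to lie in $I_P$: it only uses that $\sigma(P_0)$ and $\sigma(P_2)$ vanish at every point of $\ch\cMP\subset\cV'$, so that $h_d\alpha^d=\sigma(P_0)\sigma(Q_{-k})-\sigma(P_2)$ vanishes there as well. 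You instead multiply on the left, use $Q_{-k}b_D(\bTheta)=b_D(\bTheta+k)Q_{-k}$ and Lemma~\ref{lem540}(a) with $k$ replaced by $-k$ to exhibit the honest element $b_D(\bTheta+k)q_{-k}(\bTheta)$ of $I_P$ modulo $J\subset\DV\tau(\g)$; this keeps the whole computation inside the left ideal and makes the vanishing of $\alpha$ on $\ch\cMP$ immediate. Both routes are sound; yours is slightly cleaner in that it isolates an explicit operator $\beta(\bTheta)\in I_P$, at the cost of the extra commutation step, and it also handles the degenerate case $\deg\beta=0$ automatically (empty characteristic variety). Two small remarks: for the claim that each irreducible component of $\ch\cMP$ has dimension at least $\dim V$ you should cite the componentwise statement (Gabber's involutivity theorem, or the equidimensionality results in \cite{Bj}) rather than the bare Bernstein inequality, which only bounds $\dim\ch\cMP$ globally; and the $\tG$-stability argument in your last paragraph is superfluous, since an irreducible closed subset of some $\calC_i$ of dimension $\dim V=\dim\calC_i$ already equals $\calC_i$.
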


\begin{proof}
  Suppose that $b_D = 0$, then $\calM(b_D,k) = \calN$ is not
  holonomic. Conversely, suppose $b_D \ne 0$. We are going
  to show that $\calM_P \subset \tcV$, then
  Theorem~\ref{Pya} will give the result.
  
  Set $\alpha= \sigma(\bTheta) = \sigma(\tau(\zeta)) \in
  \C[T^*V]$, hence $\alpha(v,v^*) =
  \frac{1}{n}\dual{v^*}{v}$. Since $\tfg = \g \boplus \C
  \zeta$, we have $\tcV = \cV \cap \alpha^{-1}(0)$. Using
  the notation of Lemma~\ref{lem540} we set $P_0 =
  b_D(\bTheta) Q_k$, $P_0 Q_{-k} = b_D(\bTheta)q_k(\bTheta)
  + P_2$ with $P_2 \in J$. Notice that $\cMP = \calM_{P_0}$.
  Since $b_D \ne 0$, we can write $h(s) = b_D(s)q_k(s) = h_d
  s^d + h_{d-1} s^{d-1} + \cdots$, $d = \deg h(s) = \deg
  b_D(s) + \deg q_k(s) = \deg b_D(s)+ |k|n \ge 0$.  Recall
  that $\ord P_2 \le \ord P_0Q_k = d$, therefore
  $\sigma(P_0Q_k) = \sigma(h(\bTheta))$ or
  $\sigma(h(\bTheta)) + \sigma(P_2)$ (depending on $\ord P_2
  < d$ or $\ord P_2= d$).
  
  If $d=0$, we get $k=0$, $b_D \in \C^*$, thus $\cMP= (0)$
  and the claim is obvious. Suppose $d \ge 1$ and let
  $(v,v^*) \in \ch \cMP \subset \ch \calN = \cV'$. From $P_2
  \in J=\DVtauG \subset \DV\tau(\g)$ we know that
  $\sigma(P_2)(v,v^*) = 0$. Therefore, $\sigma(P_0)(v,v^*) =
  0$ implies
  \begin{align*}
    0 & =\sigma(P_0)(v,v^*)\sigma(Q_{-k})(v,v^*)=
    \sigma(P_0Q_{-k})(v,v^*) = \sigma(h(\bTheta))(v,v^*)
    \\
    & = h_d \sigma(\bTheta)^d(v,v^*) = h_d \alpha(v,v^*)^d.
  \end{align*}
  Hence $\alpha(v,v^*) =0$ and this proves $(v,v^*) \in \cV'
  \cap \alpha^{-1}(0) \subset \tcV$, as desired.
\end{proof}

\begin{rem}
  \label{remuro}
  The previous result generalizes the main step in the proof
  of \cite[Theorem~4.1]{Mu4} (see also \cite[Theorem~6.1 and
  Remark~6.1]{Mu4}). Indeed the homogeneous elements of
  degree $kn$ in \cite{Mu4} are the elements of $\D[k]$ and
  \cite[Lemma~4.1]{Mu4} shows, when $\tGV = (\GL(n) :
  S^2\C^n)$, that $\calM_P$ is holonomic when $P \ne 0$ and
  $\deg a_P = \ord P$.  Since $a_P(s) = b_D(s+k)a_{Q_k}(s)
  \ne 0 \iff b_D(s) \ne 0$, Theorem~\ref{thm57} ensures that
  a more general result holds for representations of Capelli
  type; notice that the example given \cite[Remark~4.1]{Mu4}
  is $P=\Omega_0 = f\Delta - b^*(\bTheta) \in J$, hence
  $\calM_P = \calN$ is not holonomic.  Observe also that
  Theorem~\ref{thm57} is proved in
  \cite[Proposition~2.1]{Mu1} in the case $(b_D =1, k \ge
  0)$.
\end{rem}

\subsection{Solutions of invariant differential equations}
We continue with a representation $\tGV$ of Capelli type.
Let $(\tG_\R : V_\R)$ be a real form of $\tGV$ in the sense
of \cite[\S4.1, Proposition~4.1]{Kim}, cf.~also
\cite[\S1.2]{Mu1} and \cite[\S4.2 \&~\S4.3]{Gyo}. Such a form
always exists. Let $M$ be a finitely generated $\DV$-module
of the form $\DV/I$.  Denote by $\calB_{V_\R}$ the space of
hyperfunctions on $V_\R$ (see, e.g., \cite[\S4.1]{Gyo}).
Then $\calB_{V_\R}$ is a $\DV$-module and the
``hyperfunction solutions space'' of $M$ is:
$$
\Sol(M,\calB_{V_\R}) = \{T \in \calB_{V_\R} : D.T= 0 \
\text{for all $D \in I$}\} \equiv
\Hom_{\cD_V}(M,\calB_{V_\R}).
$$
Notice that since any distribution on $V_\R$ can be viewed
as a hyperfunction, the ``distribution solutions space'' of
$M$ is contained in $\Sol(M,\calB_{V_\R})$.  An element $T$
of $\calB_{V_\R}$ such that $\tau(\g).T = 0$ is said to be
$\g$-invariant; it is called quasi-homogeneous if there
exist $t \in \N,\mu \in \C$ such that $(\Theta - \mu)^t.T =
0$.  Assume that $I = \DV\tau(\g) + \DV P$ for some $P \in
\DV$, then $\Sol(M,\calB_{V_\R})$ identifies with the space
of $\g$-invariant hyperfunctions $T$ which are solutions of
the equation $P.T= 0$.

The next corollary has been proved by M.~Muro
\cite[Theorem~4.1]{Mu4} for the real form $(\tG_\R=
\GL(n,\R), V_\R = \Sym_n(\R))$ of $\tGV = (\GL(n) :
S^2\C^n)$ when $P \ne 0$ and $\deg a_P = \ord P$. (See
Remark~\ref{remuro} for more details.)

\begin{cor}
  \label{cor58}
  Let $\tGV$ be of Capelli type and let $(\tG_\R : V_\R)$ be
  a real form of $\tGV$. Let $P \in \D[k]$ and write $P=
  b_D(\bTheta)Q_k + P_1$, $P_1 \in \Ker(\rad)$, as
  in~\eqref{eq51}. Assume that $b_D \neq 0$. Then,
  $\Sol(P,\calB_{V_\R})=\Sol(\cMP,\calB_{V_\R})= \{T \in
  \calB_{V_\R}: \text{$T$ $\g$-invariant, $P.T=0$}\}$ is
  finite dimensional and has a basis of quasi-homogeneous
  elements; it depends only on the polynomial $b_D(s)$ and
  the integer $k$.
\end{cor}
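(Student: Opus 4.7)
The strategy is to reduce the corollary to Theorem~\ref{thm57} together with Kashiwara's finite-dimensionality results for solutions of (regular) holonomic modules, and then exploit the natural action of $\bTheta$ on the solution space to produce a basis of quasi-homogeneous elements.

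First, by Proposition~\ref{prop53}, $J = [\DV\tau(\g)]^G\subset\DV\tau(\g)$, so writing $P = b_D(\bTheta)Q_k+P_1$ with $P_1\in J$ yields
\[
I_P \;=\; \DV\tau(\g) + \DV P \;=\; \DV\tau(\g) + \DV\, b_D(\bTheta)Q_k .
\]
Hence $\cMP = \cM(b_D,k)$, so the module (and therefore its solution space) depends only on $b_D(s)$ and on $k$. Moreover, for any $\g$-invariant hyperfunction $T$ one has $P_1.T=0$, so the equations $P.T=0$ and $b_D(\bTheta)Q_k.T=0$ are equivalent; this gives the displayed equality of solution spaces in the statement.

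Next, Theorem~\ref{thm57} asserts that $\cMP$ is holonomic with $\ch\cMP\subset\tcV$, which by Theorem~\ref{Pya} is the union of the finitely many closures $\overline{T^*_{O_i}V}$ of conormal bundles to the $\tG$-orbits on $V$. The module $\cMP$ is tautologically $\tG$-equivariant, so it belongs to the category of $\tG$-equivariant holonomic $\DV$-modules whose characteristic variety is a union of conormal bundles to the orbits of a reductive action with only finitely many orbits; by standard results, such modules are regular holonomic. Passing to the real form $(\tG_\R:V_\R)$, which also has finitely many orbits (a fact for real forms of a PHV, cf.~\cite{Kim}), Kashiwara's constructibility theorem for hyperfunction solutions of regular holonomic systems implies that $\Sol(\cMP,\calB_{V_\R}) = \Hom_{\DV}(\cMP,\calB_{V_\R})$ is finite-dimensional.

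Finally, $\bTheta\in\D$ commutes with $\tau(\g)$, and since $P \in \D[k]$ one has $[\bTheta,P]=kP$; hence, for any solution $T$,
\[
P.(\bTheta.T) \;=\; \bTheta.(P.T) - [\bTheta,P].T \;=\; -k\,P.T \;=\; 0,
\]
so $\bTheta$ acts as a linear endomorphism on the finite-dimensional space $\Sol(\cMP,\calB_{V_\R})$. Its Jordan decomposition provides a basis of generalized eigenvectors of $\bTheta$, which are by the very definition quasi-homogeneous hyperfunctions. The main obstacle will be the finite-dimensionality assertion in Step~2: once the regular holonomicity of $\cMP$ is justified via equivariance and the orbit-stratified characteristic variety, everything else is formal; the subtlety lies in invoking the correct form of the Riemann--Hilbert/constructibility theorem for hyperfunction solutions on a non-compact real form.
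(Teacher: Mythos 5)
There is a genuine gap in your Step~2, precisely at the point you yourself flag as "the main obstacle". You claim that $\cMP$ is "tautologically $\tG$-equivariant" and deduce regular holonomicity from equivariance under a group with finitely many orbits. But $\cMP=\DV/(\DV\tau(\g)+\DV P)$ is only $G$-equivariant (the ideal contains $\tau(\g)$, not $\tau(\tfg)$: it does not contain $\bTheta-\text{const}$ in general), and it is $\tG=GC$, not $G$ alone, that has finitely many orbits on $V$. So the hypothesis of the "standard result" you invoke is not met, and indeed the paper is careful never to assert that $\cMP$ is regular holonomic --- Lemma~\ref{lem512} establishes regularity only for $\tG_1$-equivariant modules, which $\cMP$ typically is not. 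Your appeal to finiteness of orbits for the real form is likewise unsubstantiated and unnecessary.

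The fix is to drop regularity altogether: the result the paper uses (Kashiwara, \cite[Th\'eor\`eme~5.1.6]{Ka2}) states that for \emph{any} holonomic $\DV$-module $M$, the space $\Hom_{\cD_V}(M,\calB_{V_\R})$ of hyperfunction solutions on a real form is finite dimensional; no regularity and no orbit count on $V_\R$ are required. Combined with Theorem~\ref{thm57} (holonomicity of $\cMP$ when $b_D\ne 0$), this gives finite dimensionality directly. The remaining parts of your argument --- the identification $I_P=\DV\tau(\g)+\DV b_D(\bTheta)Q_k$ via Proposition~\ref{prop53}, hence dependence only on $(b_D,k)$, and the stability of the solution space under $\Theta$ from $[\Theta,P]=knP$ and $[\Theta,\tau(\g)]=0$, yielding a basis of generalized $\Theta$-eigenvectors, i.e.\ quasi-homogeneous solutions --- coincide with the paper's proof and are correct.
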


\begin{proof}
  We merely repeat the proof of M.~Muro (loc.~cit.).  A
  well-known result of M.~Kashiwara (see \cite[Th\'eor\`eme
  5.1.6]{Ka2}) says that if $M$ holonomic
  $\Sol(M,\calB_{V_\R})$ is a finite dimensional $\C$-vector
  space.  Therefore by combining the remarks above and
  Theorem~\ref{thm57} we obtain that $\calS=
  \Sol(P,\calB_{V_\R})$ is finite dimensional. Now observe
  that $[\Theta,P] = kP$ and $[\Theta,\tau(\g)] = 0$ imply
  that $\calS$ is stable under the action of $\Theta$.
  Therefore $\calS$ decomposes as a direct sum of spaces of
  the form $\Ker(\Theta - \mu \, \id_\calS)^t$. We have
  noticed after~\eqref{eq53} that $\cMP$ depends only on
  $b_D(s)$ and $k$, therefore it is also the case for $\calS
  = \Hom_{\cD_V}(\cMP,\calB_{V_\R})$.
\end{proof}

 
\subsection{Regular holonomic modules}
Assume that $\tGV$ is of Capelli type and $\dim V \qmod G
=1$.

We filter $\DV$ by order of differential operators and we
set $\DV_j = \{D \in \DV : \ord D \le j\}$.  For sake of
completeness we now recall some known (or easy) results.

\begin{defn}
  \label{def59}
  Let $M$ be a finitely generated $\DV$-module.
  
  -- $M$ is {\em monodromic} if $\dim_\C \C[\Theta].x <
  \infty$ for all $x\in M$.

  -- $M$ is {\em homogeneous} if there exists a
  $\Theta$-stable good filtration on $M$, i.e.~a good
  filtration $FM = (F_pM)_{p \in \N}$ such that $\Theta.F_pM
  \subset F_pM$ for all $p$.
  
  -- $x \in M$ is {\em quasi-homogeneous} (of weight
  $\lambda$) if there exists $j \in \N$ and $\lambda \in \C$
  such that $(\Theta - \lambda)^j.x = 0$; we set $M_\lambda
  = \bigcup_{j \in \N} \Ker_M(\Theta - \lambda)^j$.
\end{defn}

Recall the following result proved in
\cite[Theorem~1.3]{Nang1} (which holds in the analytic
case).

\begin{thm}
  \label{thm510}
  Let $M$ be a homogeneous $\DV$-module. Then:
  \begin{enumerate}
  \item[{\rm (1)}] $M = \DV.E$, $E$ finite dimensional and
    generated by quasi-homogeneous elements;
  \item[{\rm (2)}] if $FM = (F_pM)_{p \in \N}$ is a
    $\Theta$-stable good filtration on $M$, the space $F_pM
    \cap M_\lambda$ is finite dimensional for all $p \in
    \N$, $\lambda \in \C$.
  \end{enumerate}
\end{thm}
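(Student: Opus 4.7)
The linchpin is the commutation $[\Theta,f] = (\deg f)\, f$ for $f \in \C[V]$ homogeneous, which yields $(\Theta - (\lambda+d))^k(fx) = f(\Theta-\lambda)^k x$ for $f$ of degree $d$ and any $x \in M$; thus multiplication by the degree-$d$ piece of $\C[V]$ carries generalized $\Theta$-eigenvectors of weight $\lambda$ into those of weight $\lambda+d$. For each $\lambda \in \C$ I would introduce
\[
F_p M^{[\lambda]} \;:=\; \bigoplus_{n \ge 0} \bigl(F_pM \cap M_{\lambda+n}\bigr),
\]
which by the shift property is a $\C[V]$-submodule of $F_pM$ with an $\N$-grading matched to the standard grading on $\C[V]$.

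Part~(2) then follows quickly: $F_pM^{[\lambda]}$ is a submodule of the Noetherian $\C[V]$-module $F_pM$, hence finitely generated; and a finitely generated $\N$-graded module over the connected graded polynomial ring $\C[V]$ has finite dimensional graded pieces, the degree-zero piece being precisely $F_pM \cap M_\lambda$.

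For part~(1), I would first argue that $\Theta$ acts locally finitely on each $F_pM$, so that $F_{p_0}M = \bigoplus_{[\lambda] \in \C/\Z} F_{p_0}M^{[\lambda]}$ is a direct sum of generalized $\Theta$-eigenspaces grouped by $\Z$-coset. Choosing $p_0$ with $M = \DV\cdot F_{p_0}M$ (``goodness'' of the filtration), Noetherianness of $F_{p_0}M$ forces only finitely many nonzero summands, and each is a finitely generated $\N$-graded $\C[V]$-module whose generators may be taken homogeneous, hence quasi-homogeneous. Collecting a finite homogeneous generating set from each summand produces a finite-dimensional $\Theta$-stable subspace $E$ spanned by quasi-homogeneous elements with $\C[V]\cdot E = F_{p_0}M$, and so $\DV\cdot E = M$.

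\textbf{Main obstacle.} The delicate step is local finiteness of $\Theta$ on $F_pM$. The derivation identity $\Theta(fx) = (\Theta f)\,x + f\,\Theta x$ implies that $\Theta$ preserves $\fm^k F_pM$ for $\fm \subset \C[V]$ the maximal ideal at the origin, so induces an action on the finite-dimensional Artinian quotients $F_pM/\fm^kF_pM$ where it honestly decomposes into generalized eigenspaces. Lifting this decomposition back to $F_pM$ cannot rest on the $(\C[V],\Theta)$-structure alone (exponential-type $\DV$-modules such as $\DV/\DV(\partial-1)$ admit $\Theta$-stable good filtrations yet have no nonzero quasi-homogeneous vectors), and must invoke the full $\DV$-action together with the coherence/regularity context implicit in Nang's setting, where one can compare $F_pM$ to the inverse limit of its Artinian quotients via an Artin--Rees type argument.
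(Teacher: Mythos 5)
First, a point of reference: the paper itself does not prove Theorem~\ref{thm510}; it is imported verbatim from P.~Nang (``Recall the following result proved in [Nang1, Theorem~1.3]''), so there is no internal proof to measure yours against. On its own terms, your argument for part~(2) is correct and complete: $[\Theta,f]=(\deg f)f$ for $f$ homogeneous does make $\bigoplus_{n\ge 0}\bigl(F_pM\cap M_{\lambda+n}\bigr)$ an $\N$-graded $\C[V]$-submodule of the $\mathcal{O}(V)$-coherent (hence Noetherian) module $F_pM$, and a finitely generated graded module over the standardly graded polynomial ring has finite-dimensional graded pieces.

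Part~(1) is where the genuine gap lies, and you have named it yourself: local finiteness of $\Theta$ on $F_pM$. I would go further than you do. Your example $M=\DV/\DV(\partial-1)\cong\C[x]e^x$ on $V=\C$ is not merely an obstacle to one lifting strategy --- it refutes the statement read literally with Definition~\ref{def59}. The filtration $F_pM=M$ for all $p$ (the one induced by the class of $1$, for the order filtration on $\DV$) is a good filtration ($\gr^FM\cong\C[x,\xi]/(\xi)$ is coherent) and is $\Theta$-stable, so $M$ is ``homogeneous''; yet $\Theta$ raises the polynomial degree of $p(x)e^x$ by one at each application, so $M$ contains no nonzero quasi-homogeneous element and conclusion~(1) fails. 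Hence no argument, yours or otherwise, can derive (1) from the hypotheses as stated in the algebraic category: one must import the additional context of Nang's setting (regular holonomicity with conical characteristic variety, or the analytic/formal category, where $e^{-x}\in\mathcal{O}^{\mathrm{an}}$ turns $e^{-x}\cdot e^x$ into a $\Theta$-invariant generator and the difficulty disappears --- note that in the paper the theorem is only ever applied to modules in $\mod_{\tilde{\calC}}^{\mathrm{rh}}(\Dv)$). So: part~(2) stands as written; part~(1) cannot be completed from the literal hypotheses, and your ``main obstacle'' is really the observation that the theorem as extracted is missing a hypothesis.
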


Using this result it is not difficult to obtain the next
corollary.

\begin{cor}
  \label{cor511}
  Let $M$ be a finitely generated $\DV$-module. The
  following assertions are equivalent:
  \begin{enumerate}
  \item[{\rm (i)}] $M$ is monodromic;
  \item[{\rm (ii)}] $M$ is homogeneous;
  \item[{\rm (iii)}] $M = \DV.E$, $E$ finite dimensional
    such that $\Theta.E \subset E$.
  \end{enumerate}
\end{cor}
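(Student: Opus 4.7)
The plan is to establish the cycle (iii) $\Rightarrow$ (ii) $\Rightarrow$ (iii) together with (i) $\Leftrightarrow$ (iii). The key device throughout is the grading $\DV = \bigoplus_{k \in \Z} \DV[k]$ with $\DV[k] = \{D : [\Theta,D] = kD\}$, combined with the order filtration $\DV_j$; observe that $[\Theta,\DV_j] \subset \DV_j$ since $\ord [\Theta,D] \le \ord D$, and that each $D \in \DV$ decomposes as a finite sum $D = \sum_k D_k$ with $D_k \in \DV[k]$.

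For (iii) $\Rightarrow$ (ii) I would take $F_pM = \DV_p.E$. This is a good filtration by construction. For $\Theta$-stability, pick $D \in \DV_p$ and $e \in E$; then $\Theta(De) = D(\Theta e) + [\Theta,D]e$, with $\Theta e \in E$ and $[\Theta,D] \in \DV_p$, so both terms lie in $\DV_p.E = F_pM$.

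For (ii) $\Rightarrow$ (iii) I would invoke Theorem~\ref{thm510}(1): there exists a finite dimensional subspace $E_0 \subset M$ generated by quasi-homogeneous elements with $M = \DV.E_0$. If $e_1,\dots,e_s$ is a basis of $E_0$ of quasi-homogeneous elements, then each $\C[\Theta].e_i$ is a finite dimensional $\Theta$-stable subspace (spanned by $e_i,\Theta e_i,\dots,\Theta^{j_i-1}e_i$ where $(\Theta-\lambda_i)^{j_i}e_i = 0$), so $E = \sum_{i=1}^s \C[\Theta].e_i$ is finite dimensional, $\Theta$-stable, and still satisfies $\DV.E = M$.

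For (iii) $\Rightarrow$ (i) the key observation is that if $(\Theta - \mu)^n e = 0$ and $D_k \in \DV[k]$, then the identity $(\Theta - \mu - k)D_k e = D_k(\Theta - \mu)e$ iterates to $(\Theta - \mu - k)^n D_k e = 0$; hence $\C[\Theta].D_k e$ is finite dimensional. Now for an arbitrary $x \in M = \DV.E$, write $x = \sum_i D^{(i)}e_i$ with $e_i$ generalized $\Theta$-eigenvectors in $E$ (possible since $\Theta|_E$ admits a Jordan decomposition) and decompose each $D^{(i)} = \sum_k D^{(i)}_k$ into graded pieces. Then $x$ is a \emph{finite} sum of generalized $\Theta$-eigenvectors, so $\C[\Theta].x$ is finite dimensional. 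For (i) $\Rightarrow$ (iii), given finitely many generators $x_1,\dots,x_m$ of $M$, the spaces $E_i = \C[\Theta].x_i$ are finite dimensional and $\Theta$-stable by hypothesis, so $E = \sum_i E_i$ satisfies $\DV.E \supset \sum \DV.x_i = M$. No step presents a genuine obstacle; the only subtlety is to remember that the decomposition $\DV = \bigoplus_k \DV[k]$ shifts generalized $\Theta$-eigenvalues by $k$, which is exactly what makes (iii) $\Rightarrow$ (i) work without $F_pM$ being finite dimensional as a $\C$-vector space.
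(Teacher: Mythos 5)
Your proof is correct and follows exactly the route the paper intends: the paper omits the argument, remarking only that the corollary follows from Theorem~\ref{thm510}, and you use that theorem for the one nontrivial implication (ii)~$\Rightarrow$~(iii) while handling the remaining implications by the standard filtration $F_pM=\DV_p.E$ and the eigenvalue-shift identity $(\Theta-\mu-k)^nD_k=D_k(\Theta-\mu)^n$ for $D_k$ in the $\ad\Theta$-weight space $\DV[k]$. All steps check out, including the closure of your cycle of implications.
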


Recall that a holonomic $\DV$-module is regular if there
exists a good filtration $FM$ on $M$ such that the ideal
$\ann_{\C[T^*V]} \gr^{F}(M)$ is radical, see
\cite[Corollary~5.1.11]{KK}.  Denote by:

-- $\mod_{\tilde{\calC}}^{\mathrm{rh}}(\cD_V)$ the category
of regular holonomic whose characteristic variety is
contained in $\tcV$;

-- $\mod_{\tilde{\calC}}^{\Theta}(\cD_V)$ the category of
monodromic modules with characteristic variety contained in
$\tcV$.

Let $G_1$ be the simply connected cover of $G$ and set
$\tG_1 = G_1 \times C$ (recall that $C \cong \C^*$ is the
connected component of the centre of $\tG$). One has:
$\Lie(\tG_1) = \tfg = \g \boplus \fc$, $\fc = \C\zeta$,
where $\tau(\zeta) = \bTheta$ as above.  The group $\tG_1$
maps onto $G \times C$, and therefore onto $\tG= GC$. It
follows that $\tG_1$ and $G \times C$ act on $V$; the orbits
in $V$ then coincide with the $\tG$-orbits $O_1,\dots,O_t$.
The category of $\tG_1$-equivariant $\DV$-modules is denoted
by
$$
\mod^{\tG_1}(\Dv).
$$
Observe that if
$$
\mod^{G\times C}(\Dv)
$$
is the category of $(G\times C)$-equivariant $\DV$-modules,
any object in $\mod^{G \times C}(\Dv)$ can be naturally
considered as an object of $\mod^{\tG_1}(\Dv)$.  When $G$ is
simply connected, e.g.~$G= \SL(n)$, $\SL(n)\times \SL(n)$,
$\SL(n) \times \Sp(m)$ or $\mathrm{G}_2$, we have $\tG_1 = G
\times C$ and these two categories are the same.

\begin{lem}
  \label{lem512}
  Let $M$ be a finitely generated $\DV$-module. Then:
  $$
  \mathrm{(i)} \; M \in \mod^{\tG_1}(\Dv) \iff \mathrm{(ii)}
  \; M \in \mod_{\tilde{\calC}}^{\mathrm{rh}}(\cD_V) \limply
  \mathrm{(iii)} \; M \in
  \mod_{\tilde{\calC}}^{\Theta}(\cD_V).
  $$
  In particular, $\mod^{G\times C}(\Dv)=
  \mod_{\tilde{\calC}}^{\mathrm{rh}}(\cD_V)$ when $G$ is
  simply connected.
\end{lem}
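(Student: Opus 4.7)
The plan is to prove the two implications $(i)\Rightarrow(ii)$ and $(ii)\Rightarrow(iii)$, then the reverse $(ii)\Rightarrow(i)$, and finally to deduce the concluding equality.

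For $(i)\Rightarrow(ii)$, I would appeal to the general theory of equivariant $\cD$-modules \cite{Hot2}: a $\tG_1$-equivariant $\DV$-module is automatically regular holonomic, and its characteristic variety is forced into the union of conormal bundles to the $\tG_1$-orbits. Since $\tG_1\twoheadrightarrow\tG$ is surjective the $\tG_1$-orbits coincide with the $\tG$-orbits $O_1,\dots,O_t$, and infinitesimally the equivariance says that $\ch M$ is annihilated by every symbol $\sigma(\tau(\xi))(v,v^*)=\dual{v^*}{\xi.v}$, $\xi\in\tfg$, giving $\ch M\subset\tcV$ via Theorem~\ref{Pya}.

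For $(ii)\Rightarrow(iii)$, I observe that $\tcV$ is conical with respect to the canonical $\C^*$-action on $T^*V$ induced by scaling on $V$: each orbit $O_i$ is stable under $C\simeq\C^*\subset\tG$, so each $\overline{T^*_{O_i}V}$ is $\C^*$-stable. A classical criterion of Kashiwara--Schapira then guarantees that a holonomic $\DV$-module with $\C^*$-conic characteristic variety is monodromic, i.e.~$\Theta$ acts locally finitely on $M$, which is $(iii)$.

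For $(ii)\Rightarrow(i)$, I would invoke the Riemann--Hilbert correspondence to translate the problem to constructible sheaves: a regular holonomic $\DV$-module with characteristic variety in $\tcV$ corresponds to a perverse sheaf on $V$ constructible along the orbit stratification $\{O_i\}$, and the $\tG_1$-equivariance amounts to requiring that every local system on $O_i=\tG_1/H_i$ arising by restriction have monodromy factoring through the component group $\pi_0(H_i)$. Verifying this for the eight Capelli type representations of Appendix~\ref{A1} is the main obstacle; it requires a case-by-case analysis of stabilizers via the homotopy sequence $\pi_1(\tG_1)\to\pi_1(O_i)\to\pi_0(H_i)\to 0$, and is consistent with Nang's explicit calculations in \cite{Nang1, Nang3, Nang5} for the three representations treated there. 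The concluding statement is then immediate: when $G$ is simply connected, $G_1=G$ and $\tG_1=G\times C$, so $\mod^{G\times C}(\Dv)=\mod^{\tG_1}(\Dv)$, which equals $\mod_{\tilde{\calC}}^{\mathrm{rh}}(\cD_V)$ by $(i)\iff(ii)$.
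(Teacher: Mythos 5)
Your first two steps are essentially sound: (i)$\Rightarrow$(ii) is the paper's argument (equivariant $\Rightarrow$ regular holonomic by \cite{Hot2}, with $\ch M$ a union of conormals to orbits), and your route to (iii) --- conicity of $\tcV$ under the $\C^*$-action induced by scaling plus Brylinski's characterization of monodromic modules --- is a legitimate alternative to the paper's more elementary argument, provided you remember that this criterion requires \emph{regular} holonomicity (which hypothesis (ii) supplies; for merely holonomic modules the implication fails, e.g.\ $\DV/\DV(\partial_z-1)$ on $\C$).

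The genuine gap is (ii)$\Rightarrow$(i). You translate the problem through Riemann--Hilbert into a monodromy condition on the local systems carried by each orbit, and then state that verifying this condition --- a case-by-case analysis of the stabilizers in the eight Capelli-type cases --- is ``the main obstacle'', without carrying it out. That is not a proof, and it is also the wrong strategy: it misses why the lemma is stated for $\tG_1=G_1\times C$ rather than for $\tG$. The paper's argument is uniform and purely infinitesimal. Choosing a good filtration $F$ with $I(M)=\ann_{\C[T^*V]}\gr^F(M)$ radical, the inclusion $\ch M\subset\tcV$ forces the symbols $\sigma(\tau(\xi))$, $\xi\in\g$, and $\alpha=\sigma(\bTheta)$ into $I(M)$, hence $\tau(\g).F_jM\subset F_jM$ and $\Theta.F_jM\subset F_jM$. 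The second inclusion gives homogeneity, hence monodromicity (this is where (iii) actually comes from in the paper); then, since $[\tau(\g),\Theta]=0$, every $x\in M$ lies in a finite dimensional $\tau(\tfg)$-stable subspace $\sum_i F_jM\cap M_{\lambda_i}$ (Theorem~\ref{thm510}), so the $\tfg$-action is locally finite and integrates by the exponential series to a rational action of $\tG_1$. No Riemann--Hilbert correspondence and no knowledge of orbit stabilizers is needed, precisely because one only integrates to the cover $\tG_1$; the homotopy-sequence analysis you sketch is what would be required to descend the action to $\tG$ itself, which the lemma does not claim. You should replace your third step by this filtration argument (or actually carry out the verification you postpone, which would be far longer and partly redundant with Nang's work).
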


\begin{proof}
  (i) $\imply$ (ii): By \cite[Theorem~12.11]{Bo}, or
  \cite[\S 5]{Hot2}, $M$ is regular holonomic.  Its
  characteristic variety $\ch M$ is therefore a $\tG$-stable
  subvariety of $T^*V$. Let $X$ be an irreducible component
  of $\ch M$; then $X$ is a Lagrangian conical closed
  $\tG$-stable subvariety of $T^*V$ and, if $\pi : T^*V \sto
  V$ is the natural projection, \cite[\S~5, Lemme~1]{Ka1},
  implies that $X =\overline{T_{\pi(X)^{\mathrm{reg}}}^*V}$.
  But it is easy to see that $\pi(X)^{\mathrm{reg}}$ (the
  smooth locus of $\pi(X)$) is equal to $O_j$ for some $1
  \le j \le t$.  Hence $X= \calC_j$ and $\ch M \subset
  \tcV$.
  
  (ii) $\imply$ (iii) and (i): (We mimic the proof of
  \cite[Proposition~1.6]{Nang1}.) Choose a good filtration
  such that $I(M)=\ann_{\C[T^*V]} \gr^{F}(M)$ is radical.
  Since $\ch M \subset \tcV$, the principal symbols $\alpha=
  \sigma(\bTheta)$ and $\sigma(\tau(\xi))$, $\xi \in \g$,
  belong to $I(M)$, that is to say $\sigma(\tau(\xi))
  \gr_j^F(M) =\alpha \gr_j^F(M) = (0) \subset
  \gr_{j+1}^F(M)$; in other words: $\bTheta.F_jM=
  \Theta.F_jM \subset F_jM$ and $\tau(\xi).F_jM \subset
  F_jM$.  In particular, $M$ is homogeneous,
  i.e.~monodromic.  Let $x \in M$. Since $\dim \C[\Theta].x
  < \infty$ there exist $j \in M$ and
  $\lambda_1,\dots,\lambda_l$ such that $x \in \sum_{i=1}^l
  F_jM \cap M_{\lambda_i}$. From $[\tau(\g),\Theta] =0$ and
  $\tau(\g).F_jM \subset F_jM$ it follows that
  $\tau(\g).F_jM \cap M_{\lambda_i} \subset F_jM \cap
  M_{\lambda_i}$; hence $U(\g).x$ is contained in the finite
  dimensional space $\sum_{i=1}^l F_jM \cap M_{\lambda_i}$,
  cf.~Theorem~\ref{thm510}. This shows that the action of
  $\ftg$ on $M$ given by the $\tau(\xi)$, $\xi \in \ftg$, is
  locally finite. The formula
  $$
  \Exp^{t\xi}.x = \exp(t\tau(\xi)).x= \sum_{k \ge 0}
  \frac{t^k}{k!} \tau(\xi)^k.x
  $$
  then yields a rational action of $\tG_1$ on $M$ whose
  differential is given by multiplication by the elements
  $\tau(\xi)$. It remains to check that this action is
  compatible with the action of $\tG$ on $\DV$, which is an
  easy exercise.
\end{proof}

Following \cite{Nang1,Nang2,Nang3,Nang4, Nang5} we want to
describe the category
$\mod_{\tilde{\calC}}^{\mathrm{rh}}(\Dv)$ in terms of a
category of modules over $U=R = \C[z,\delta,\theta]$. A
finitely generated $U$-module $N$ is called {\em monodromic}
if, for all $v \in N$, $\dim_\C \C[\theta].v < \infty$.
Denote by
$$
\mod^\theta(U)
$$
the category of monodromic modules. Observe that $N \in
\mod^\theta(U)$ decomposes as
$$
N= \bigoplus_{\lambda \in \C} N_\lambda, \quad N_\lambda =
\bigcup_{j \ge 0}\Ker_N(\theta- \lambda)^j.
$$
Recall that $[\theta,z] =z$, $[\theta,\delta] = -\delta$,
$z\delta = b^*(\theta)$, $\delta z = b(\theta)$ and that the
roots of $b(-s)$ are
$\lambda_0+1=1,\lambda_1+1,\dots,\lambda_{n-1}+1$,
cf.~Theorem~\ref{thm31}.  We then obtain:
\begin{itemize}
\item $z.N_\lambda \subset N_{\lambda +1}$,
  $\delta.N_\lambda \subset N_{\lambda -1}$;
\item $\dim N_\lambda < \infty$ ($N$ is finitely generated);
\item $z\delta$, resp.~$\delta z$, is bijective on
  $N_\lambda$ if and only if $\lambda \ne -\lambda_j$,
  resp.~$\lambda \ne -(\lambda_j+1)$, therefore $z :
  N_\lambda \isomto N_{\lambda+1}$, $\delta : N_{\lambda+1}
  \isomto N_{\lambda}$ if $\lambda \ne -(\lambda_j +1)$.
\end{itemize}
From these properties it is easy to give a description of
the category $\mod^\theta(U)$ in terms of ``finite diagrams
of linear maps'' as in \cite{Nang1,Nang2,Nang3,Nang5}.

Let $M \in \mod^{G \times C}(\Dv)$. Since the differential
of the $G$-action on $M$ is given by $x\mapsto \xi.x$, $\xi
\in \g, x \in M$, one has:
\begin{equation}
  \label{eq54}
  \Phi M= M^G =\{x \in M : \tau(\g).x = 0\}.
\end{equation}
Therefore $\DVtauG.M=0$ and, via $\rad: \DVG/\DVtauG \isomto
U$ (Theorem~\ref{thm38}), $\Phi M$ can be considered as a
$U$-module by: $u.x= D.x$ if $u = \rad(D)$, $x \in M^G$.
Observe also that the isomorphism $\rad$ yields a natural
structure of right $U$-module on the module $\cN=
\DV/\DV\tau(\g)$ by: $\bar{a}.u = \overline{aD}$ if $\bar{a}
\in \cN$ and $u=\rad(D) \in U$. For $N \in \mod^\theta(U)$
we set:
\begin{equation}
  \label{eq55}
  \Psi N= \cN \otimes_U N.
\end{equation}
With these notation we have:

\begin{prop}
  \label{prop513} {\rm (1)} Let $M \in \mod^{G \times
    C}(\Dv)$ and $N \in \mod^\theta(U)$, then:
$$
\Phi M \in \mod^\theta(U), \quad \Psi N \in \mod^{G \times
  C}(\Dv), \quad \Phi\Psi N = N.
$$
{\rm (2)} Suppose that any $M \in \mod^{G \times C}(\Dv)$ is
generated by $M^G$ as a $\DV$-module. Then the categories
$\mod^{G \times C}(\Dv)$ and $\mod^\theta(U)$ are equivalent
via the functors $\Phi$ and $\Psi$. If furthermore $G$ is
simply connected, we obtain:
$\mod_{\tilde{\calC}}^{\mathrm{rh}}(\Dv) \equiv
\mod^\theta(U)$.
\end{prop}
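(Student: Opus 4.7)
The plan is to exploit Proposition~\ref{prop53} (which under the standing hypotheses of the subsection gives $\Ker(\rad) = [\DV\tau(\g)]^G$) to reduce everything to an adjunction between $\Psi = \cN\otimes_U-$ and $\Phi = \Hom_\DV(\cN,-)$. Under this identification, $\rad$ descends to an isomorphism $\DVG/[\DV\tau(\g)]^G \isomto U$; this is what makes $M^G$ into a $U$-module (via $\rad(D)\cdot x := D.x$, which lands in $M^G$ because $[\tau(\g),\DVG]=0$) and $\cN$ into a right $U$-module (right multiplication by $\DVG$ preserves $\DV\tau(\g)$ because $\tau(\g)\DVG\subset\DV\tau(\g)$, and its right annihilator on $\overline{1}$ is $\Ker(\rad)$).

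For part~(1), monodromicity of $\Phi M$ is immediate from monodromicity of $M$ (Lemma~\ref{lem512}) and the identity $\rad(\bTheta)=\theta$. Finite generation of $\Phi M$ over $U$ comes from choosing a finite dimensional $(G\times C)$-stable $E\subset M$ with $\DV.E=M$ (Corollary~\ref{cor511}, Theorem~\ref{thm510}(1)) and applying reductivity of $G$: the surjection $\DV\otimes_\C E\sto M$ induces a surjection $(\DV\otimes_\C E)^G\sto M^G$, and $(\DV\otimes_\C E)^G$ is finitely generated over $\DVG$ by isotypic decomposition together with the fact that each $G$-isotypic component of $\DV$ is a finitely generated $\DVG$-module. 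The left ideal $\DV\tau(\g)$ being $(G\times C)$-stable, $\cN$ is equivariant, and hence $\Psi N$ is a $(G\times C)$-equivariant $\DV$-module, finitely generated because $\cN$ is cyclic and $N$ is finitely generated over $U$. For $\Phi\Psi N = N$, the crucial observation is that the right $U$-action on $\cN$ commutes with the $G$-action, so the decomposition $\cN = \cN^G\oplus\cN'$ (with $\cN'$ the sum of nontrivial isotypic components) is a splitting of right $U$-modules; this yields $(\cN\otimes_U N)^G = \cN^G\otimes_U N$, and $\cN^G\cong U$ gives $\Phi\Psi N = U\otimes_U N = N$, realised by the adjunction unit $n\mapsto\overline{1}\otimes n$.

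For part~(2), the Hom--tensor formula makes $\Psi$ left adjoint to $\Phi$, with unit $\eta$ an isomorphism by~(1). Let $\epsilon_M:\Psi\Phi M\to M$, $\overline{D}\otimes x\mapsto D.x$, be the counit. Under the generating hypothesis, $\Im(\epsilon_M)=\DV.M^G=M$, so $\epsilon_M$ is surjective; set $K=\Ker(\epsilon_M)\in\mod^{G\times C}(\Dv)$. Applying the left exact functor $\Phi$ to $0\to K\to\Psi\Phi M\to M\to 0$ yields $0\to K^G\to(\Psi\Phi M)^G\to M^G$, where the second map is $\Phi(\epsilon_M)$; this is inverse to the isomorphism $\eta_{\Phi M}$ by the triangle identity, hence itself an isomorphism. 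Therefore $K^G=0$, and the generating hypothesis applied to $K$ forces $K=\DV.K^G=0$. This shows $\epsilon_M$ is an isomorphism, so $\Phi,\Psi$ are quasi-inverse equivalences; the final assertion for simply connected $G$ follows immediately from Lemma~\ref{lem512}.

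The main obstacle is the finite generation of $\Phi M$ over $U$ in part~(1): it requires combining a $(G\times C)$-stable finite dimensional generating set for $M$ with the reductivity of $G$ and the finite generation of the $G$-covariants of $\DV$ over $\DVG$. A secondary technical point is verifying that the $G$-invariants functor commutes with $\otimes_U$ against $\cN$, which rests on the $G$-equivariance of the right $U$-action together with reductivity of $G$; once this is in hand the rest of the argument is formal adjunction bookkeeping.
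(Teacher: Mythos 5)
Your argument is correct in outline and follows essentially the same route as the paper: monodromicity of $\Phi M$ from $\rad(\bTheta)=\theta$, finite generation of $M^G$ from reductivity, the identification $(\Psi N)^G=\bar 1\otimes_U N\cong N$ via the $G$-isotypic splitting of $\cN$ as a right $U$-module, and in part (2) the surjection $\Psi\Phi M\to M$ whose kernel $L$ satisfies $L^G=0$ and hence $L=0$ under the generating hypothesis (you phrase this through the adjunction triangle identity, the paper through exactness of $(-)^G$; these are the same computation).

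There is one point where your justification, as written, does not work: the claim that ``the left ideal $\DV\tau(\g)$ being $(G\times C)$-stable, $\cN$ is equivariant, and hence $\Psi N$ is a $(G\times C)$-equivariant $\Dv$-module.'' For the $G$-factor this is fine, because the right $U$-action on $\cN$ is by right multiplication by elements of $\DVG$, which $G$ fixes, so the first-factor $G$-action descends through $\otimes_U$. But $C$ does not fix $\DVG$ pointwise (e.g.\ $c.f=\chi(c)f$), so the first-factor $C$-action on $\cN\otimes_U N$ is not well defined. The correct action is the twisted one, $\Exp^{t\zeta}.(\bar a\otimes_U x)=\Exp^{t\zeta}.\bar a\otimes_U\exp(t\theta).x$, and making sense of $\exp(t\theta)$ on $N$ is exactly where the monodromicity of $N$ enters; its differential is then left multiplication by $\bTheta$, as required for an object of $\mod^{G\times C}(\Dv)$. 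This is the verification the paper carries out explicitly, and it should be added to your part (1); the rest of your argument is unaffected.
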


\begin{proof}
  (1) From $G$ reductive and $M$ finitely generated, one
  deduces that the $\DVG$-module $M^G$ is finitely
  generated. Recall that $M$ is monodromic
  (Lemma~\ref{lem512}); since $\theta.x= \rad(\bTheta).x=
  \Theta.x$ it follows that $\Phi M$ is monodromic. Thus
  $\Phi N \in \mod^\theta(U)$.
  
  It is clear that $\Psi N$ is finitely generated over
  $\DV$.  The group $G$ acts naturally on $\DV$ and this
  action passes to $\cN$ (note that $\DV\tau(\g)$ is
  $G$-stable). One easily checks that one can endow $\cN
  \otimes_U N$ with a rational $G$-module structure by
  setting: $g.(\bar{a} \otimes_U x) =
  \overline{g.a}\otimes_U x$ for $\bar{a} \in \cN, g \in G,
  x \in N$. Notice that since $N$ is monodromic the group $C
  = \exp(\C \zeta)$ acts on $N$ by $\Exp^{t\zeta}.x=
  \exp(t\theta).x$. One can then verify that $C$ acts on
  $\cN \otimes_U N$ by: $\Exp^{t\zeta}.(\bar{a} \otimes_U
  x)= \Exp^{t\zeta}.\bar{a}\otimes_U \exp(t\theta).x$, $t
  \in \C$. One shows without difficulty that this $G\times
  C$-action is compatible with the $\tG$-action on $\DV$.
  Moreover, with the previous notation we get that:
  \begin{align*}
    \frac{d}{dt}_{\mid t=0}\Exp^{t\xi}.(\bar{a} \otimes_U x)
    & = \overline{[\tau(\xi),a]} \otimes_U x=
    \overline{\tau(\xi)a} \otimes_U x = \tau(\xi).(\bar{a}
    \otimes_U x),\\
    \frac{d}{dt}_{\mid t=0}\Exp^{t\zeta}.(\bar{a} \otimes_U
    x) &= \overline{[\bTheta,a]} \otimes_Ux + \bar{a}
    \otimes_U \theta.x= \overline{\bTheta a}\otimes_Ux -
    \overline{a\bTheta}\otimes_u x + \bar{a} \otimes_U
    \theta.x
    \\
    & = \bTheta.(\bar{a}\otimes_Ux) - \overline{a}\otimes_U
    \theta.x+ \bar{a} \otimes_U \theta.x \\
    &= \bTheta.(\bar{a}\otimes_Ux) =
    \tau(\zeta).(\bar{a}\otimes_Ux).
  \end{align*}
  This shows that $\Psi N \in \mod^{G \times C}(\Dv)$. The
  equality $(\Psi N)^G = \bar{1}\otimes_UN$ follows easily
  from the definition of the $G$-action on $\Psi N$, hence
  $\Phi\Psi N = N$.

  (2) Note that there is a surjective $(G\times
  C)$-equivariant morphism of $\DV$-modules $\mathtt{m} :
  \Psi\Phi M = \cN \otimes_U M^G \sto M$ given by
  $\mathtt{m}(\bar{a}\otimes_Ux)= a.x$. Set $L= \Ker
  \mathtt{m}$, hence $\mathtt{m} : (\Psi\Phi M)/ L \isomto
  M$.  Then $L \in \mod^{G \times C}(\Dv)$ is generated by
  $L^G$, by hypothesis, and we obtain ($G$ is reductive):
$$
M^G \cong \bigl(\Psi\Phi M/ L\bigr)^G= (\Psi\Phi M)^G/L^G =
(\Phi\Psi\Phi M)/ L^G = M^G/ L^G.
$$
This implies $L^G= (0)$, thus $L=0$ and $\Psi\Phi M \equiv
M$.  The last statement follows from Lemma~\ref{lem512}
\end{proof}

The previous proposition and work of P.~Nang lead to the
following:

\begin{conj}
  \label{conj514}
  Let $\tGV$ be of Capelli type with $\dim V \qmod G
  =1$. Then the categories $\mod^{G \times C}(\Dv)$ and
  $\mod^\theta(U)$ are equivalent via the functors $\Phi$
  and $\Psi$.
\end{conj}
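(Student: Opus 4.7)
By Proposition~\ref{prop513}(2), the entire content of the conjecture reduces to the single generation statement: every $M \in \mod^{G \times C}(\Dv)$ satisfies $M = \DV \cdot M^G$. So the plan is to set $M' = \DV \cdot M^G$, $N = M/M'$, and use that $G$ is reductive (so the functor $(-)^G$ of $G$-invariants is exact) to conclude $N^G = M^G/(M')^G = 0$, since $M^G \subseteq (M')^G \subseteq M^G$. The module $N$ still lies in $\mod^{G \times C}(\Dv)$, and the task becomes to show that $N = 0$.

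By Lemma~\ref{lem512}, $N$ is regular holonomic with $\ch N \subset \tcV$, and therefore has finite length. I would reduce to $N$ simple. Via Kashiwara's equivalence, any such simple object is the minimal extension $\mathrm{L}(\cO,\mathcal{L})$ of a simple $(G \times C)$-equivariant local system $\mathcal{L}$ on a single $\tG$-orbit $\cO \subset V$; by \cite{Kac} there are only finitely many such orbits, since $\tGV$ is MF. The goal thus reduces to showing that for every orbit $\cO$ and every simple $(G \times C)$-equivariant local system $\mathcal{L}$ on $\cO$, the minimal extension $\mathrm{L}(\cO,\mathcal{L})$ carries a nonzero $G$-invariant section.

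The main obstacle is precisely this last step, and I would attack it by induction on the orbit closure ordering. For the closed orbit $\{0\}$, $(G \times C)$-equivariant simple modules supported at the origin are delta-type and their $G$-invariants are visible by direct inspection. For a general orbit, I would use a distinguished triangle splitting $M$ into parts supported on strictly smaller orbit closures and an open restriction, together with the Capelli-type identity $\Ker(\rad) = [\DV\tau(\g)]^G$ of Proposition~\ref{prop53}, to propagate $G$-invariants from the open piece. The uniform step demands an explicit understanding, for each of the eight cases of Appendix~\ref{A1}, of the stabilizer $\tG_v$ of a generic point $v \in \cO$ and of the component group $\tG_v/\tG_v^0$, to control which local systems can occur; this is a case-by-case check following Nang's strategy in \cite{Nang1,Nang3,Nang5}.

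As an alternative, more uniform strategy, I would attempt to bootstrap from the Howe duality of Theorem~\ref{thm414}: the decomposition $\CV = \bigoplus_{\gamma \in \Gamma} X(\gamma) \otimes E(p(\gamma))$ already produces the conjectured equivalence on the structure sheaf side, with each multiplicity space $X(\gamma)$ being a simple lowest-weight $\calA$-module, hence a Verma-type module over $U \cong \widetilde{U}/(\Omega)$ (Proposition~\ref{prop29}). I would attempt, for an arbitrary $M \in \mod^{G\times C}(\Dv)$, to construct an analogous $(\calA,\g)$-bimodule decomposition of $M$ whose $G$-isotype multiplicity spaces are $U$-modules in $\mod^\theta(U)$; combined with Proposition~\ref{prop513}(1) this would both produce nonzero $G$-invariants from Section~\ref{ssec22}'s classification of $U$-modules and directly identify $\Psi\Phi M \cong M$. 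The chief difficulty in this route lies in extending the decomposition from the free module $\CV$ to all $(G\times C)$-equivariant holonomic modules, and it is essentially equivalent to the first approach above.
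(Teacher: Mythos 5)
The statement you are proving is labelled as a \emph{conjecture} in the paper, and the paper itself offers no proof: it only establishes (Proposition~\ref{prop513}) that the conjecture is equivalent to the generation statement $M = \DV\, M^G$ for all $M \in \mod^{G \times C}(\Dv)$, notes that this is known in three of the eight Capelli-type cases by work of Nang, and leaves the general case open. Your first paragraph reproduces exactly this reduction (the exactness of $(-)^G$ and the argument $N^G = M^G/(M')^G = 0$ is in substance the proof of Proposition~\ref{prop513}(2) already given in the paper), so up to that point you have added nothing beyond what the paper proves.

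Everything after that is a plan, not a proof, and the gap is precisely where you locate it. After reducing (correctly, via finite length and passage to a simple submodule) to the claim that every simple object of $\mod^{G\times C}(\Dv)$ has a nonzero $G$-invariant vector, i.e.\ that every minimal extension $\mathrm{L}(\cO,\calL)$ of a simple $(G\times C)$-equivariant local system on a $\tG$-orbit satisfies $\mathrm{L}(\cO,\calL)^G \ne 0$, you do not prove this for any orbit beyond $\{0\}$, and you explicitly defer the inductive step to ``a case-by-case check following Nang's strategy'' and ``an explicit understanding of the stabilizer \dots in each of the eight cases.'' This is exactly the open content of the conjecture: when the component group of the stabilizer of a point of $\cO$ is nontrivial, there exist nontrivial equivariant local systems $\calL$, and it is not at all automatic that their minimal extensions carry nonzero $G$-invariant sections --- a priori such a module could have $\mathrm{L}(\cO,\calL)^G = 0$ and would then be a counterexample to the generation statement. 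Nothing in your proposal rules this out, and your second (Howe-duality) strategy is, as you yourself concede, ``essentially equivalent to the first approach'' and equally unexecuted. So the proposal should be regarded as a (reasonable) research programme consistent with the paper's own Proposition~\ref{prop513}, not as a proof of the conjecture.
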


By Proposition~\ref{prop513}, this conjecture is equivalent
to showing that $M = \DV M^G$ for all $M \in \mod^{G \times
  C}(\Dv)$.

P.~Nang \cite{Nang1, Nang3, Nang5} has proved
Conjecture~\ref{conj514} in the cases: $(\SO(n) \times \C^*
: \C^n)$, $(\GL(n) \times \SL(n) : \Mat_n(\C))$, $(\GL(2m) :
\bigwedge^{2}\C^{2m})$.  It would be interesting to obtain a
uniform proof in the eight cases where $\tGV$ is of Capelli
type and $\dim V \qmod G =1$ (see Appendix~\ref{A1}). As
observed above the category $\mod^\theta(U)$ has a nice
combinatorial description, which would give, when $G$ is
simply connected, a classification of the regular holonomic
modules on $V$ whose characteristic variety is contained in
$\tcV$.

\clearpage


\appendix
\section{Irreducible MF representations}
\label{A1}

\vspace{0.8cm}

\begin{center}
  \begin{sideways}
    \begin{tabular}{|l|c|c|c|c|c|}
      \hline
      &   $\tGV$ & $\deg f$ & $b(s)$ & Capelli & 
      com.~parabolic  \\
      \hline \hline
      (1) & $(\SO(n) \times \C^* : \C^n)$ & $2$ &
      $(s+1)(s+n/2)$ & yes & yes \\ 
      \hline
      (2) & $(\GL(n) : S^2\C^n)$ & $n$ & $\prod_{i=1}^n(s+(i+1)/2)$
      & yes  & yes\\ 
      \hline
      (3) & $(\GL(n) : \Wedge^2\C^n)$,  $n$ even & $n/2$  &
      $\prod_{i=1}^{n/2}(s+2i-1)$ 
      & yes  & yes\\ 
      \hline
      (4) &$(\GL(n) \times \SL(n) : \Mat_n(\C))$ & $n$ &
      $\prod_{i=1}^{n}(s+i)$ 
      & yes  & yes\\ 
      \hline
      (5) &$(\Sp(n)\times \GL(2)   : \Mat_{2n,2}(\C))$ & $2$ &
      $(s+1)(s+2n)$ 
      & yes  & no \\ 
      \hline
      (6) & $(\SO(7) \times \C^* : \mathrm{spin}=\C^8)$ & $2$
      & $(s+1)(s+4)$ 
      & yes  & no \\ 
      \hline
      (7) & $(\SO(9) \times \C^* : \mathrm{spin}=\C^{16})$ &
      $2$ & $(s+1)(s+8)$ 
      & no  & no \\ 
      \hline
      (8)  & $(\mathrm{G}_2 \times \C^* : \C^{7})$ & $2$ & $(s+1)(s+7/2)$
      & yes  & no \\ 
      \hline
      (9) & $(\mathrm{E}_6 \times \C^* : \C^{27})$ & $3$ & $(s+1)(s+5)(s+9)$
      & no  & yes \\ 
      \hline
      (10) & $(\GL(4) \times \Sp(2) : \Mat_{4}(\C))$ & $4$ &
      $(s+1)(s+2)(s+3)(s+4)$ 
      & yes  & yes \\ 
      \hline
      (3') & $(\GL(n) : \Wedge^2\C^n)$, $n$ odd & ---  & ---
      & yes  & no \\ 
      \hline
      (4') & $(\GL(n) \times \SL(m) : \Mat_{n,m}(\C)), n \ne m
      $ & --- & --- 
      & yes  & no \\ 
      \hline
      (11) & $(\Sp(n) \times \GL(1): \C^{2n})$ & --- & ---
      & yes  & no \\ 
      \hline
      (12) & $(\Sp(n) \times \GL(3): \Mat_{2n,3}(\C))$ & --- & ---
      & no  & no \\ 
      \hline
      (10') & $(\GL(n) \times \Sp(2) : \Mat_{n,4}(\C))$, $n
      \ne 4$ & --- & --- 
      & yes  & no \\ 
      \hline
      (13) & $(\SO(10) \times \C^* :
      \half\mathrm{spin}=\C^{16})$ & --- & --- 
      & yes  & no \\ 
      \hline
    \end{tabular}
  \end{sideways}
\end{center}

\vfill

\newpage


}
\end{document}